\crefname{equation}{}{}
\tikzstyle{vertex}=[circle,draw=black,fill=black,inner sep=0,minimum size=0.35cm,text=white,font=\footnotesize]
\DeclareSymbolFont{symbolsC}{U}{pxsyc}{m}{n}
\DeclareMathSymbol{\medcircle}{\mathbin}{symbolsC}{7}
\crefname{algocf}{Algorithm}{Algorithms}
\crefname{equation}{}{} 
\colorlet{refkey}{orange!20}
\colorlet{labelkey}{blue!30}
\crefname{algocf}{Algorithm}{Algorithms}
\numberwithin{equation}{section}
\newtheorem{theorem}{Theorem}[section]
\newtheorem{lemma}[theorem]{Lemma}
\newtheorem{claim}[theorem]{Claim}
\crefname{claim}{Claim}{Claims}
\newtheorem{conjecture}[theorem]{Conjecture}
\newtheorem*{question*}{Question}
\theoremstyle{definition}
\newtheorem{definition}[theorem]{Definition}
\newtheorem*{definition*}{Definition}
\theoremstyle{remark}
\newtheorem*{remark}{Remark}
\newtheorem*{notation}{Notation}
\newcommand{\snorm}[1]{\lVert#1\rVert}
\newcommand{\bs}{\boldsymbol}
\newcommand{\mb}{\mathbb}
\newcommand{\mbf}{\mathbf}
\newcommand{\mbm}{\mathbbm}
\newcommand{\mc}{\mathcal}
\newcommand{\mr}{\mathrm}
\newcommand{\on}{\operatorname}
\newcommand{\eps}{\varepsilon}
\let\originalleft\left
\let\originalright\right
\renewcommand{\left}{\mathopen{}\mathclose\bgroup\originalleft}
\renewcommand{\right}{\aftergroup\egroup\originalright}
\newif\ifpublic
\newcommand{\ignore}[1]{}
\title{Substructures in Latin squares}
\author[Kwan]{Matthew Kwan}
\address{Institute of Science and Technology Austria (IST Austria), 3400 Klosterneuburg, Austria}
\email{matthew.kwan@ist.ac.at}
\author[A2]{Ashwin Sah}
\author[A3]{Mehtaab Sawhney}
\address{Department of Mathematics, Massachusetts Institute of Technology, Cambridge, MA 02139, USA}
\email{\{asah,msawhney\}@mit.edu}
\author[Simkin]{Michael Simkin}
\address{Center of Mathematical Sciences and Applications, Harvard University, Cambridge, MA 02138, USA}
\email{msimkin@cmsa.fas.harvard.edu}
\thanks{Sah and Sawhney were supported by NSF Graduate Research Fellowship Program DGE-1745302. Sah was supported by the PD Soros Fellowship. Simkin was supported by the Center of Mathematical Sciences and Applications at Harvard University.}
\begin{document}

\maketitle
\begin{abstract}
We prove several results about substructures in Latin squares. First,
we explain how to adapt our recent work on high-girth Steiner triple
systems to the setting of Latin squares, resolving a conjecture of Linial
that there exist Latin squares with arbitrarily high girth. As a consequence,
we see that the number of order-$n$ Latin squares with no \emph{intercalate}
(i.e., no $2\times2$ Latin subsquare) is at least $(e^{-9/4}n-o(n))^{n^{2}}$.
Equivalently, $\Pr\left[\mbf{N}=0\right]\ge e^{-n^{2}/4-o(n^{2})}=e^{-(1+o(1))\mb{E}\mbf{N}}$,
where $\mbf{N}$ is the number of intercalates in a uniformly random order-$n$
Latin square.

In fact, extending recent work of Kwan, Sah, and Sawhney, we resolve
the general large-deviation problem for intercalates in random Latin
squares, up to constant factors in the exponent: for any constant
$0<\delta\le1$ we have $\Pr[\mbf{N}\le(1-\delta)\mb{E}\mbf{N}]=\exp(-\Theta(n^{2}))$
and for any constant $\delta>0$ we have $\Pr[\mbf{N}\ge(1+\delta)\mb{E}\mbf{N}]=\exp(-\Theta(n^{4/3}\log n))$.

Finally, as an application of some new general tools for studying substructures in random Latin squares, we show that in almost all order-$n$ Latin squares, the
number of \emph{cuboctahedra} (i.e., the number of pairs of possibly
degenerate $2\times2$ submatrices with the same arrangement of symbols)
is of order $n^{4}$, which is the minimum possible. As observed by
Gowers and Long, this number can be interpreted as measuring ``how
associative'' the quasigroup associated with the Latin square is.
\end{abstract}

\section{Introduction}\label{sec:intro}
A \emph{Latin square} (of order $n$) is an $n\times n$ array filled
with the numbers $1$ through $n$ (we call these \emph{symbols}),
such that every symbol appears exactly once in each row and column.
Latin squares are a fundamental type of combinatorial design, and
in their various guises they play an important role in many contexts
(ranging, for example, from group theory, to experimental design,
to the theory of error-correcting codes). In particular, the multiplication
table of any group forms a Latin square. A classical introduction to the subject of Latin squares can be found in \cite{KD15}, though recently Latin squares have also played a role in the ``high-dimensional combinatorics'' program spearheaded by Linial, where they can be viewed as the first nontrivial case of a ``high-dimensional permutation''\footnote{To see the analogy to permutation matrices, note that a Latin square can equivalently, and more symmetrically, be viewed as an $n\times n\times n$ zero-one array such that every axis-aligned line sums to exactly 1.} (see for example \cite{LL14,LL16,LS18,Lin18}).

There are a number of surprisingly basic questions about Latin squares
that remain unanswered, especially with regard to statistical aspects.
For example, there is still a big gap between the best known upper
and lower bounds on the number of order-$n$ Latin squares (see for
example \cite[Chapter~17]{vW01}), and there is no known algorithm
that (provably) efficiently generates a uniformly random order-$n$ Latin square\footnote{Jacobson and Matthews~\cite{JM96} and Pittenger~\cite{Pit97} designed
Markov chains that converge to the uniform distribution, but it is
not known whether these Markov chains mix rapidly.}. Perhaps the main difficulty is that Latin squares are extremely
``rigid'' objects: in general there is very little freedom to make
local perturbations to change one Latin square into another.

Some of the most fundamental questions in this area concern existence
and enumeration of various types of substructures. We collect a few
different results of this type.

\subsection{Intercalates}\label{sub:intercalates}
Perhaps the simplest substructures one may wish to consider are \emph{intercalates}, which are order-2 Latin (combinatorial) subsquares. That is, an intercalate in a Latin square $L$ is a pair of rows $i<j$ and a pair of columns $x<y$ such that $L_{i,x}=L_{j,y}$ and $L_{i,y}=L_{j,x}$.
It is a classical fact that for all orders except 2 and 4 there exist
Latin squares with no intercalates~\cite{KLR75,KT76,McL75} (such
Latin squares are said to have property ``$N_{2}$''). As our first
result, we obtain the first nontrivial lower bound on the \emph{number}
of order-$n$ Latin squares with this property (upper bounds have
previously been proved in \cite{MW99,KSS21}).

\begin{theorem}\label{thm:pasch-free}
The number of order-$n$ Latin squares with
no intercalates is at least
\[
\left(e^{-9/4}n-o(n)\right)^{n^{2}}.
\]
\end{theorem}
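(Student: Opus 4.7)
The plan is to adapt the high-girth nibble framework from our recent work on Steiner triple systems to the setting of Latin squares, and then extract an explicit counting bound. Order-$n$ Latin squares correspond bijectively to partitions of the edge set of the complete tripartite graph $K_{n,n,n}$ (with parts indexed by rows, columns, and symbols) into $n^{2}$ triangles, one per cell. Under this correspondence an intercalate is precisely a set of four triangles whose union is a copy of $K_{2,2,2}$, playing the role of the forbidden Pasch / cycle configurations that had to be avoided in the Steiner case.

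The construction I would use is a modified random triangle-removal nibble on $K_{n,n,n}$: at each step, include each remaining triangle independently with some small probability, keep a maximal pairwise edge-disjoint subfamily, and forbid any triangle that, together with three previously chosen triangles, would close a copy of $K_{2,2,2}$. The core technical claim is that the densities of both available and ``closing'' triangles track the predictions of the mean-field heuristic throughout the process; in particular, the closing triangles form a negligible fraction of the available ones, so the process rarely aborts. Once the nibble has reduced the hypergraph to a very sparse remainder, an absorption module tailored to the tripartite setting completes the partial Latin square without creating any intercalate.

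To obtain the counting bound, I would lower-bound the number of distinct output Latin squares by counting trajectories of the process. Unconstrained, this reproduces the classical lower bound $L_n \ge ((1-o(1))n/e^{2})^{n^{2}}$. The avoidance of $K_{2,2,2}$-closing triangles introduces an additional multiplicative loss of $\exp(-n^{2}/4-o(n^{2}))$, where $n^{2}/4=(1+o(1))\mathbb{E}\mathbf{N}$ is the expected number of intercalates in a uniformly random Latin square. Multiplying yields
\[
\left((1-o(1))n/e^{2}\right)^{n^{2}}\cdot e^{-n^{2}/4-o(n^{2})}=\left(e^{-9/4}n-o(n)\right)^{n^{2}},
\]
using $e^{-2}\cdot e^{-1/4}=e^{-9/4}$, which is exactly the claimed bound.

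The main obstacle is the concentration analysis for the modified nibble, which must simultaneously track edge degrees, pair-codegrees, counts of ``almost-intercalates'' (triples of triangles that are one edge short of completing a $K_{2,2,2}$), and related statistics, all the way down to the scale where absorption takes over. This demands a martingale / differential-equation argument analogous to the Steiner triple system case, but carried out in the tripartite geometry, together with a new absorber construction suited to $K_{n,n,n}$ and to the specific forbidden configuration of an intercalate. Even with the Steiner template in hand, verifying that the ``closing'' density is sharply concentrated throughout the entire process, and that the absorber can be designed so as never to introduce an intercalate, is where the bulk of the technical work lies.
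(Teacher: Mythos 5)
Your proposal is essentially the same approach as the paper: both adapt the high-girth iterative absorption machinery from the Steiner triple system setting to $K_{n,n,n}$, run a random greedy process that selects triangles while forbidding the completion of any $K_{2,2,2}$, count trajectories and divide by $(n^2)!$, and observe that the extra factor $\exp(-t^3/n^6)$ in the number of available triangles at step $t$ integrates to $e^{-n^2/4}$, giving $e^{-2}\cdot e^{-1/4}=e^{-9/4}$. The only cosmetic difference is that the paper uses a one-triangle-at-a-time random greedy process (the ``high-girth triangle removal process'' already analyzed in \cite{GKLO20,BW19} and generalized in \cite{KSSS22}) rather than a genuine R\"odl-nibble with independent inclusion, but the trajectory-counting calculation is identical.
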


\cref{thm:pasch-free} is proved by adapting our recent work on \emph{high-girth
Steiner triple systems}; we discuss this further in \cref{sub:intro-girth}.

For comparison, the total number of order-$n$ Latin squares is well
known\footnote{This is classical but nontrivial; it follows from celebrated permanent estimates due to Bregman~\cite{Bre73} and Egorychev--Falikman~\cite{Fal81,Ego81}.
See for example \cite[Chapter~17]{vW01}.} to be $(e^{-2}n-o(n))^{n^{2}}$. So, \cref{thm:pasch-free} can be
interpreted as the fact that a \emph{random }order-$n$ Latin square
is intercalate-free with probability at least $e^{-n^{2}/4-o(n^{2})}$.
Resolving a conjecture of McKay and Wanless~\cite{MW99} (see also \cite{CGW08,KS18}),
Kwan, Sah, and Sawhney~\cite{KSS21} recently proved\footnote{This is easy to guess heuristically but surprisingly difficult to prove; we are not aware of a way to estimate the expectation $\mb E \mbf N$ without going through large deviation estimates.} that the \emph{expected}
number of intercalates in a random order-$n$ Latin square is $n^{2}/4+o(n^2)$,
so writing $\mbf{N}$ for the number of intercalates in a random
order-$n$ Latin square, \cref{thm:pasch-free} corresponds to the
Poisson-type inequality
\[\Pr\left[\mbf{N}=0\right]\ge\exp\left(-(1+o(1))\mb{E}\mbf{N}\right).\]

Combining this with \cite[Theorem~1.2(a)]{KSS21}, we obtain an optimal lower-tail large deviation estimate, up to a constant factor in the exponent.
\begin{theorem}\label{thm:lower-tail}
Let $\mbf{N}$ be the number of intercalates
in a random order-$n$ Latin square, and fix a constant $0<\delta\le1$.
Then
\[\Pr\left[\mbf{N}\le(1-\delta)\mb{E}\mbf{N}\right]=\exp(-\Theta(n^{2})).\]
\end{theorem}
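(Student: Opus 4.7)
The plan is to observe that this theorem is essentially a one-step combination of \cref{thm:pasch-free} of this paper with \cite[Theorem~1.2(a)]{KSS21}: the upper bound on the lower-tail probability is already supplied by the latter, and the matching lower bound on the probability is provided by the former. Concretely, the direction $\Pr[\mbf{N}\le(1-\delta)\mb{E}\mbf{N}]\le\exp(-\Omega(n^{2}))$ is quoted as a black box from \cite[Theorem~1.2(a)]{KSS21}, which is the quantitative half of the problem and needs no adaptation here; it is established there via a switching-based argument together with entropy estimates.

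For the reverse direction $\Pr[\mbf{N}\le(1-\delta)\mb{E}\mbf{N}]\ge\exp(-O(n^{2}))$, I would use the expectation asymptotic $\mb{E}\mbf{N}=n^{2}/4+o(n^{2})$ (proved in \cite{KSS21} and recalled immediately after \cref{thm:pasch-free}). Since $\mb{E}\mbf{N}>0$ for all sufficiently large $n$, for every fixed $0<\delta\le 1$ we have the trivial containment of events
\[
\left\{\mbf{N}=0\right\}\subseteq\left\{\mbf{N}\le(1-\delta)\mb{E}\mbf{N}\right\}.
\]
Combined with the Poisson-type bound $\Pr[\mbf{N}=0]\ge\exp(-n^{2}/4-o(n^{2}))$ furnished by \cref{thm:pasch-free}, this yields
\[
\Pr\left[\mbf{N}\le(1-\delta)\mb{E}\mbf{N}\right]\ge\Pr\left[\mbf{N}=0\right]\ge\exp(-Cn^{2})
\]
for an absolute constant $C$, uniformly in $\delta\in(0,1]$.

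There is no serious obstacle at this stage: all of the substance is loaded into \cref{thm:pasch-free} (whose proof is the high-girth-style construction alluded to in \cref{sub:intro-girth}) and into the prior large-deviation bound of \cite{KSS21}. The only minor point to verify is that the implicit constant in the upper bound from \cite{KSS21} does not degenerate for fixed $\delta$, so that the $\Theta$-notation on the exponent is legitimate; since $\delta$ is a fixed constant and not taken to $0$, this is immediate from the statement of their theorem.
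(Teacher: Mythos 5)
Your proposal is correct and matches the paper's intended (very short) argument exactly: the paper itself introduces \cref{thm:lower-tail} as ``Combining this [\cref{thm:pasch-free}] with \cite[Theorem~1.2(a)]{KSS21},'' which is precisely your two-directional splice of the quoted upper bound on the lower-tail probability with the lower bound $\Pr[\mbf N=0]\ge\exp(-n^2/4-o(n^2))$ from \cref{thm:pasch-free} and the containment $\{\mbf N=0\}\subseteq\{\mbf N\le(1-\delta)\mb E\mbf N\}$.
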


We expect that \cref{thm:pasch-free} is best-possible (related
conjectures appeared in \cite{GKLO20,BW19}). In \cref{conj:lower-tail}, we make a specific prediction for the constant factor in the exponent, in the setting of \cref{thm:lower-tail}.

The upper tails of $\mbf{N}$ behave rather differently, due to the ``infamous
upper tail'' phenomenon elucidated by Janson~\cite{JR02}. Specifically,
it seems that the ``most likely way'' for $\mbf{N}$ to be much
larger than its expected value is for $\mbf{N}$ to contain a ``tightly
clustered'' set of intercalates (for example, if $L$ contains the
multiplication table of an abelian 2-group $(\mb{Z}/2\mb{Z})^{k}$,
then it contains at least about $2^{3k}$ intercalates).

We are able to obtain a similarly optimal estimate for the upper tail,
using very different methods from \cref{thm:lower-tail} (specifically,
we adapt the machinery of Harel, Mousset, and Samotij~\cite{HMS19}, studying upper tails using high moments and entropic
stability).
\begin{theorem}\label{thm:upper-tail}
Let $\mbf{N}$ be the number of intercalates
in a random order-$n$ Latin square, and fix a constant $\delta>0$.
Then
\[\Pr\left[\mbf{N}\ge(1+\delta)\mb{E}\mbf{N}\right]=\exp(-\Theta(n^{4/3}\log n)).\]
\end{theorem}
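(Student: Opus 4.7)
The proof of \cref{thm:upper-tail} has two directions: the lower bound on the tail probability (a construction achieving the stated order of magnitude) and the matching upper bound (a concentration-type inequality). I would handle these separately, with essentially disjoint techniques.

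\emph{Lower bound via planting.} For the lower bound $\Pr[\mbf{N}\ge(1+\delta)\mb{E}\mbf{N}]\ge\exp(-O(n^{4/3}\log n))$, my plan is to exhibit an explicit planted structure whose containment in the Latin square both forces at least $\delta\mb{E}\mbf{N}=\Theta(n^{2})$ extra intercalates and occurs with probability at least $\exp(-O(n^{4/3}\log n))$. The natural candidate is a highly symmetric Latin subsquare of order $m$, for instance the multiplication table of $(\mb{Z}/2\mb{Z})^{k}$, in which every $2\times2$ sub-array is an intercalate, so that the planted region alone contributes $\Theta(m^{4})$ intercalates. Using permanent-type or entropy estimates for the number of Latin squares containing such a substructure (in the spirit of the Bregman and Egorychev--Falikman bounds already invoked for the total count $L(n)$), the probability of containing a specified order-$m$ subsquare should be of order $\exp(-\Theta(m^{2}\log(n/m)))$. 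Optimizing $m$ --- or, more likely, passing to a cleverer ``hub''-type planting in which many intercalates share rows, columns, and symbols so as to pack many intercalates per planted cell --- should produce the claimed exponent $n^{4/3}\log n$.

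\emph{Upper bound via high moments.} For the matching upper bound $\Pr[\mbf{N}\ge(1+\delta)\mb{E}\mbf{N}]\le\exp(-\Omega(n^{4/3}\log n))$, I would adapt the entropic-stability plus high-moment framework of Harel, Mousset, and Samotij~\cite{HMS19}, developed originally for subgraph counts in Erd\H{o}s--R\'enyi random graphs. The goal is to bound $\mb{E}[\mbf{N}^{k}]$ for $k$ of order $n^{4/3}$, and then apply Markov's inequality to $\mbf{N}^{k}$; the choice of $k$ is dictated by the break-even point between the factorial-type combinatorial gain from the number of $k$-tuples of intercalates and the multiplicative cost $\exp(-\Theta(\log n))$ per extra intercalate forced to appear. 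The moment expansion reduces to counting $k$-tuples of intercalates in a uniformly random Latin square, weighted by the probability that they appear simultaneously. Entropic stability asserts, roughly, that the measure on Latin squares conditioned on $\mbf{N}$ being unusually large is close in relative entropy to one obtained by planting a single small cluster of $\Theta(k)$ intercalates, so that the moment is dominated by the cost of the cheapest planted structure producing excess intercalates --- exactly the quantity optimized in the previous paragraph.

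\emph{Main obstacle.} The principal technical difficulty, relative to the Erd\H{o}s--R\'enyi setting of \cite{HMS19}, is the complete lack of independence in the uniform distribution on Latin squares: each intercalate involves four highly correlated cell values, and toggling one intercalate on or off propagates constraints throughout the rest of the array. The hard step will therefore be to implement the planting/switching analysis in a way compatible with the entropic-stability framework --- specifically, to show that conditioning on $\mbf{N}\ge(1+\delta)\mb{E}\mbf{N}$ genuinely forces, up to negligible entropy cost, the presence of a \emph{localized} cluster of intercalates, rather than a diffuse excess spread over the whole square. I expect this to require combining the switching-type arguments developed in \cite{KSS21} with quantitative entropy-compression estimates for the intercalate process, together with a moment computation that is insensitive to the small correlations remaining after conditioning on the planted cluster; this is the step where essentially all the new work lies.
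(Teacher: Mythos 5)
Your high-level picture matches the paper's: the lower bound is obtained by planting a highly structured small partial Latin square (the multiplication table of $(\mb{Z}/2\mb{Z})^{k}$), and the upper bound is obtained by adapting the Harel--Mousset--Samotij high-moment / entropic-stability machinery. Two points, however, would block the proof as you have sketched it.

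First, the intercalate count for the planted table is wrong. In $(\mb{Z}/2\mb{Z})^{k}$ with $m=2^{k}$, a pair of rows determines a difference $d$, and a $2\times 2$ block on those rows is an intercalate precisely when the chosen column pair has sum $d$; there are $m/2$ such column pairs, giving $\binom{m}{2}\cdot m/2=\Theta(m^{3})$ intercalates, not $\Theta(m^{4})$ --- it is not the case that every $2\times2$ sub-array is an intercalate. With the correct count, $m^{3}\asymp n^{2}$ forces $m\asymp n^{2/3}$, and the cost of planting $m^{2}$ cells is $n^{-\Theta(m^{2})}=\exp(-\Theta(n^{4/3}\log n))$, which is the right exponent; your $m^{4}$ count would instead have produced $\exp(-\Theta(n\log n))$. (In the paper this direction is simply cited from \cite[Theorem~1.1(d)]{KSS21}, where exactly this planting argument is carried out.)

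Second, and more seriously, you propose to run the moment computation directly on the full uniformly random Latin square. The paper does not do this: it first reduces to a $k\times n$ Latin \emph{rectangle} with $k=\varepsilon n$ via an averaging argument over $k$-row subsets combined with a Bregman / Egorychev--Falikman change-of-measure estimate (\cite[Proposition~4]{MW99}). The point of the rectangle regime $k=o(n)$ is that a clean switching argument (swapping the contents of a cell with another cell in the same row) shows that for any small partial Latin array $Q$, the conditional law of $\mbf L_{r,c}$ given $Q\subseteq\mbf L$ is within $O((k+|Q_{r}|)/n^{2})$ of uniform. That quantitative near-independence estimate (and the resulting comparison lemma, \cref{lem:comparison}) is exactly what makes the iterated conditioning in the moment bound go through: as long as no seed has appeared, each conditional expectation of the intercalate count stays below $(1+\delta-\varepsilon)k^{2}/4$, and taking $\ell\asymp k^{4/3}\log n$ (note the extra $\log n$ over your stated $n^{4/3}$) beats the Markov target $((1+\delta)k^{2}/4)^{\ell}$. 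Without something playing the role of the rectangle reduction, you would need an analogous one-step near-independence estimate for a full Latin square, which is not available at the needed strength; this is where your plan to ``combine switching-type arguments with quantitative entropy estimates'' would have to become concrete, and the rectangle reduction is the concrete device.
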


\cref{thm:upper-tail} closes the gap between lower and upper bounds
recently proved by Kwan, Sah, and Sawhney in \cite{KSS21}. Actually, we
are able to prove an even sharper large deviation inequality for intercalates
in random \emph{Latin rectangles}, in terms of a certain extremal
function; see \cref{thm:large-deviations-rectangle}.

\subsection{Cycles and girth}\label{sub:intro-girth}
Recall that the girth of a graph is the length of its shortest cycle.
In \cite{Lin18}, Linial defines a \emph{cycle}\footnote{We remark that the word ``cycle'' in the context of Latin squares
sometimes refers to a different object: every pair of rows, columns,
or symbols defines a permutation which decomposes into ``row-cycles'',
``column-cycles'', or ``symbol-cycles''. We will not need this
notion in the present paper.} in a Latin square $L$ to be a set of rows $A$, a set of columns
$B$, and a set of symbols $C$, with $|A|+|B|+|C|>3$, such that the $A\times B$ subarray of $L$ contains at least $|A|+|B|+|C|-2$ symbols in the set $C$. He defines the \emph{girth} of a Latin square $L$ to be the minimum of $|A|+|B|+|C|$ over all such cycles in $L$. These definitions
are motivated by the \emph{Brown--Erd\H os--S\'os problem} in
extremal hypergraph theory, and in particular by an old conjecture
of Erd\H os on the existence of high-girth \emph{Steiner triple systems}, which we recently proved in \cite{KSSS22} (see \cite{KSSS22} for definitions and motivation). Answering a conjecture of Linial~\cite{Lin18},
we show that there exist Latin squares with arbitrarily high girth.

\begin{theorem}\label{thm:high-girth-latin-square}
Given $g\in\mb{N}$, there is $N_{\ref{thm:high-girth-latin-square}}(g)\in\mb{N}$ such that if $N \ge N_{\ref{thm:high-girth-latin-square}}(g)$, then there exists an order-$N$ Latin square with girth greater than $g$.
\end{theorem}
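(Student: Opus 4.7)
My plan is to adapt the high-girth Steiner triple system construction of \cite{KSSS22} to the tripartite setting. First, I would translate the problem: a Latin square of order $n$ is equivalent to a decomposition of the tripartite graph $K_{n,n,n}$ (on parts $R, C, S$ indexing rows, columns, and symbols) into triangles, where the triangle $\{i,j,s\}$ encodes $L_{i,j}=s$. Linial's girth-$g$ condition then reads: no set of at most $g$ vertices, with sizes $|A|, |B|, |C|$ in the three parts and $|A|+|B|+|C|\ge 4$, spans $|A|+|B|+|C|-2$ or more triangles of the decomposition. This is exactly the tripartite analog of the Brown--Erd\H os--S\'os $(v,v-2)$-configuration avoidance that drives \cite{KSSS22}.

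With the translation in place, I would run the algorithmic skeleton of \cite{KSSS22}: a random greedy triangle-packing process on $K_{n,n,n}$ combined with iterative absorption. The greedy step chooses edge-disjoint tripartite triangles one at a time, uniformly at random subject to avoiding any triangle whose inclusion would complete a forbidden configuration (i.e., bring any set of at most $g$ vertices to $|A|+|B|+|C|-2$ chosen triangles). Tripartite nibble/Pippenger-type arguments would show this process can be run until only a pseudorandom sparse leftover remains, which is then completed using a pre-reserved tripartite absorber to yield a full order-$N$ Latin square of girth exceeding $g$. To control the bad events one tracks, for each configuration type $(j,j-2)$ with $j \le g$, its first-moment count across all ways of distributing the $j$ vertices among $R, C, S$, and applies a union bound as in \cite{KSSS22}.

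The main obstacle is the tripartite absorber. In \cite{KSSS22} the absorber is a carefully engineered union of gadgets, each capable of swapping a small collection of triples while preserving the girth constraint; porting these gadgets to Latin squares requires redesigning every triangle so that it respects the $R \cup C \cup S$ tripartition, which reduces the symmetries available in the Steiner case (one cannot freely permute vertex roles between parts). The nibble and concentration arguments should transfer essentially verbatim after re-indexing by this partition, but the absorber construction and the verification that it correctly handles arbitrary pseudorandom leftovers are where the tripartite structure most directly constrains the argument, and where I expect the bulk of the new technical work to lie.
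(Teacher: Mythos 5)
Your high-level plan matches the paper's: translate order-$N$ Latin squares to triangle-decompositions of $K_{N,N,N}$, run a constrained random greedy packing to cover most edges while avoiding the forbidden $(j,j-2)$-configurations for $j\le g$, and finish with iterative absorption using a pre-reserved tripartite absorber. The translation of Linial's girth condition you give is also correct.

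However, you have misidentified where the genuinely new work lies, and as stated your proposal has a gap that would block the nibble itself from running. You claim the greedy/nibble machinery ``should transfer essentially verbatim after re-indexing,'' and place the bulk of the difficulty on the absorber. In fact the tripartite absorber is a fairly mechanical adaptation of \cite[Theorem~4.1]{KSSS22}: one replaces the path-cover, cycle-cover and sphere-cover gadgets with tripartition-respecting versions, and the proofs go through with only bookkeeping changes. The genuinely nontrivial obstacle is inside the cover-down iteration: before one can run the (generalized) high-girth triangle removal process, one needs a \emph{regularity boosting} step that thins the set of available triangles down to a subcollection in which every edge lies in $(1\pm n^{-\Omega(1)})$ of the average number of available triangles. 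In \cite{KSSS22} (following \cite{BGKLMO20}) this is done by explicitly constructing a fractional triangle-decomposition via $K_5$-gadgets that shift weight within a vertex's link, and those gadgets are fundamentally non-tripartite — the five vertices cannot be colored so that every triangle used is tripartite. So the ``verbatim transfer'' fails there, and one must prove a new fractional triangle-decomposition result for quasirandom tripartite graphs (and also handle the fact that exact triangle-divisibility is not available at intermediate stages, only approximate divisibility, which introduces $O(n^{2/3})$ error terms into the edge-weights). This is the lemma the paper identifies as the ``only real complication,'' and it is proved by a different method (an iterative contraction/adjustment scheme using alternating weights around $6$-cycles, drawing on ideas of Montgomery and Bowditch--Dukes) rather than by writing down an explicit decomposition. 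If you attempt the proposal as written, you will hit this wall precisely when you try to set up the regularity conditions for the nibble in each cover-down round.
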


\cref{thm:high-girth-latin-square} can be proved in essentially the same way as the analogous theorem for Steiner triple systems in \cite{KSSS22}. The only real complication concerns a ``triangle-regularisation'' lemma, which is much simpler in the setting of \cite{KSSS22} than in the setting of \cref{thm:high-girth-latin-square}. Basically, we need a fractional triangle-decomposition result for quasirandom tripartite graphs. Suitable techniques in the dense tripartite setting have already been developed by Montgomery \cite{Mon17} and Bowditch and Dukes \cite{BD19}; we give a somewhat different proof suitable for our application (combining ideas from both these papers, and introducing some new ones).

\cref{thm:pasch-free} is closely related to \cref{thm:high-girth-latin-square}: it is not hard to see that a Latin square has girth greater than 6 if and only if it has no intercalate. The methods in \cite{KSSS22} (which we explain in this paper how to adapt to prove \cref{thm:high-girth-latin-square}) easily yield a lower bound on the \emph{number} of Latin squares or Steiner triple systems with girth greater than a given constant $g$ (see \cite[Theorem~1.3]{KSSS22}), so with a simple calculation one can deduce \cref{thm:pasch-free} from the proof of \cref{thm:high-girth-latin-square}.

\subsection{Cuboctahedra}\label{sub:cuboctahedra}
A \emph{cuboctahedron}\footnote{The reason for the name is that (as we will see in \cref{fig:cuboctahedron}), one can interpret these objects in such a way that they resemble geometric cuboctahedra. In \cite{GL20}, the authors simply call these objects ``octahedra''.} in a Latin square is a pair of pairs of rows $(r_1,r_2),(r_1',r_2')$ and a pair of pairs of columns $(c_1,c_2),(c_1',c_2')$ such that $L_{r_i,c_j}=L_{r_i',c_j'}$ for all $i,j\in \{1,2\}$. Essentially, this is a pair of $2\times 2$ submatrices with the same pattern of entries, though degeneracies are allowed (e.g., a pair of $1\times 2$ subrectangles with the same entries also counts, as does a pair of cells with the same entries). When discussing cuboctahedra, we will always refer to labeled cuboctahedra (unlike the case with intercalates).

Binary operations whose multiplication tables are Latin squares are called \emph{quasigroups} (roughly speaking, these are like groups without an associativity assumption).
It turns out that a quasigroup is a group if and only if its multiplication table has $n^5$ cuboctahedra (which is the maximum possible). This is sometimes called the \emph{quadrangle condition} (due to Brandt~\cite{Bra27}). As explored by Gowers and Long \cite{GL20}, the number of cuboctahedra in a Latin square is a measure of ``how associative'' its corresponding quasigroup is\footnote{We remark that a different measure of associativity was considered in \cite{Lis20,DW21}, though there seems to be no obvious connection between the two measures. We thank the anonymous referee for bringing this to our attention.}.

We show that a random Latin square typically has $(4+o(1))n^4$ cuboctahedra.

\begin{theorem}\label{thm:cuboctahedra}
A random order-$n$ Latin square $\mbf L$ has $(4+o(1))n^4$ cuboctahedra whp.
\end{theorem}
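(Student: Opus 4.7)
Writing $\mbf{X}$ for the cuboctahedron count, my plan is to decompose $\mbf{X}=\sum_\tau \mbf{X}^{(\tau)}$ according to the partition type $\tau$ of the 8-tuple $(r_1,r_2,r_1',r_2',c_1,c_2,c_1',c_2')$, i.e., the pattern of equalities among these indices. Four types contribute the main term of order $n^4$; all other types yield $o(n^4)$ even deterministically.

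Three of the four main types are deterministic. First, if $r_1=r_2$ and $r_1'=r_2'$ but $c_1\ne c_2$, then for each choice of $(r_1,r_1',c_1,c_2)$ there is a unique pair $(c_1',c_2')$ making the cuboctahedron condition hold, since in row $r_1'$ each symbol appears exactly once; this contributes exactly $n^3(n-1)$ cuboctahedra. Symmetrically, $c_1=c_2$ with $c_1'=c_2'$ and $r_1\ne r_2$ gives another $n^3(n-1)$. Finally, the ``self-cuboctahedron'' case where $(r_1',r_2')=(r_1,r_2)$ and $(c_1',c_2')=(c_1,c_2)$ (with $r_1\ne r_2$, $c_1\ne c_2$) trivially satisfies the condition, contributing $n^2(n-1)^2$. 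One checks that all other degeneracies are either empty (for example, if $r_1=r_2$ but $r_1'\ne r_2'$, then the condition forces two distinct rows to agree in column $c_1'$, contradicting the Latin property) or contribute only $O(n^3)$ deterministically. Summing, these deterministic pieces yield $3n^4(1+o(1))$.

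The remaining main type has all four rows distinct and all four columns distinct; write $\mbf{X}_I$ for its contribution. I would compute its first two moments. Each such 8-tuple satisfies the cuboctahedron condition with probability $(1+o(1))/n^4$, via cell-probability estimates for random Latin squares (of the sort developed in \cite{KSS21} and, presumably, strengthened here as part of the ``general tools for studying substructures'' mentioned in the abstract), so summing over the $\sim n^8$ such tuples gives $\mb{E}[\mbf{X}_I]=(1+o(1))n^4$. For concentration, I would expand $\mb{E}[\mbf{X}_I^2]$ over ordered pairs of Type~I cuboctahedra and stratify by the number of shared rows and columns; ``near-disjoint'' pairs should dominate and match $(\mb{E}[\mbf{X}_I])^2$, whence Chebyshev yields $\mbf{X}_I=(1+o(1))n^4$ whp. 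The hard part will be obtaining cell-probability estimates with sufficient uniformity across all the intersection patterns that arise in the second-moment expansion, and verifying that none of these patterns has an anomalously large contribution; this is precisely where the paper's general substructure-counting machinery should play a central role.
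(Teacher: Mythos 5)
There are two genuine gaps. First, your claim that all partition types other than the four main ones contribute $o(n^4)$ cuboctahedra \emph{deterministically} is false. In the multiplication table of $\mb{Z}/n\mb{Z}$ (a group, so by the quadrangle condition there are exactly $n^5$ cuboctahedra) one finds $\Theta(n^5)$ cuboctahedra with all four rows and all four columns distinct, so $\mbf{X}_I=\Theta(n^5)$ there, and ``overlapping'' types such as $r_1=r_2'$ with all other labels distinct contribute $\Theta(n^4)$. None of this is excluded by the Latin property alone; it must be shown rare \emph{for a typical} Latin square. This is precisely what the paper does via \cref{thm:general-upper-tail}, applied separately to each $8$-hyperedge colored triple system $H'$ obtained by identifying vertices of the nondegenerate cuboctahedron (including the diagonal symbol identification $s_1=s_4$, which your partition by row/column coincidences does not separate out of $\mbf{X}_I$), after verifying a ``$4$-stability'' condition by casework.

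Second, your Chebyshev plan rests on cell-probability estimates for the \emph{full} random order-$n$ Latin square, uniform over all intersection patterns in $\mb{E}[\mbf{X}_I^2]$ — joint probabilities for up to sixteen prescribed cells. No such estimates exist; the authors remark in the introduction that they cannot even evaluate $\mb{E}\mbf{N}$ for the far simpler intercalate count except by passing through large-deviation estimates. The paper sidesteps full-square cell probabilities entirely. For the upper bound it runs a high-moment (not variance) argument on a uniformly random Latin rectangle with $\gamma n$ rows, where $\Pr[Q\subseteq\mbf{L}]$ is controllable via \cite[Theorem~4.7]{GM90}, and then takes a union bound over all choices of $\gamma n$ rows, columns, and symbols; this requires a failure probability $\exp(-\omega(n(\log n)^2))$, far beyond what Chebyshev can deliver. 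For the lower bound it transfers to the triangle removal process via Keevash-style absorption (\cref{thm:transference-TRP,lem:TRP-independent}) together with Freedman's inequality (\cref{thm:freedman-concentration}), again because the transference step only tolerates events whose failure probability is $\exp(-\Omega(n^{2-\gamma}))$. A bare second-moment computation on the full Latin square cannot reproduce either direction.
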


It is not hard to see that \emph{every} Latin square has at least $(3-o(1))n^4$ cuboctahedra\footnote{In fact there are always at least this many \emph{degenerate} cuboctahedra: there are always $(1-o(1))n^4$ cuboctahedra obtained by taking the same $2\times 2$ submatrix twice, there are $(1-o(1))n^4$ cuboctahedra obtained by taking pairs of $1\times 2$ subrectangles with the same pair of symbols, and there are $(1-o(1))n^4$ cuboctahedra obtained by taking pairs of  $2\times 1$ subrectangles with the same pair of symbols.}, so up to constant factors random Latin squares are typically ``as non-associative as possible''. It remains an interesting question whether there exist any Latin squares with $(4-\Omega(1))n^4$ cuboctahedra.

The upper bound in \cref{thm:cuboctahedra} (i.e., that almost every order-$n$ Latin square has at most $(4+o(1))n^4$ cuboctahedra) may be proved with similar methods to \cref{thm:upper-tail}. In fact, some aspects of the proof can be simplified substantially because we are not attempting to prove an optimal upper tail bound. We also state a general-purpose result that provides upper bounds on general configuration counts in random Latin squares, as long as a certain ``stability'' criterion is satisfied (\cref{thm:general-upper-tail}).

To prove the lower bound in \cref{thm:cuboctahedra}, we make use of some ideas developed in \cite{Kwa20,FK20,KSS21}, via which a random Latin square can be approximated by the so-called \emph{triangle removal process}. These ideas are subject to quantitative limitations of completion theorems due to Keevash~\cite{Kee18,Kee18c}, and are therefore only suitable for controlling events which occur with probability extremely close to 1 (specifically, they must hold with probability $1-\exp(-\Omega(n^{2-b}))$ for a very small constant $b$). We therefore require some non-standard arguments to prove very high probability bounds (see \cref{thm:strong-lower-cuboctahedra} for a precise statement).

\subsection{Further directions}\label{sub:intro-further}
There are a number of fascinating further directions of study
in this area. Concerning the constant factors in the exponents in \cref{thm:lower-tail,thm:upper-tail}, we make some fairly precise conjectures. Again let $\mbf{N}$ be the number of intercalates
in a random order-$n$ Latin square.
\begin{conjecture}\label{conj:lower-tail}
For every constant $0<\delta\le1$, we have
\[\Pr\left[\mbf{N}\le(1-\delta)\mb{E}\mbf{N}\right]=\exp\left(-(\delta+(1-\delta)\log(1-\delta)+o(1))\mb{E}\mbf{N}\right).\]
\end{conjecture}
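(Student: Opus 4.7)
The conjectured rate function $\delta+(1-\delta)\log(1-\delta)$ is precisely the lower-tail large-deviation rate function for a Poisson random variable with mean $\mb E\mbf N$, so \cref{conj:lower-tail} asserts that $\mbf N$ behaves in a Poisson fashion on the scale of lower-tail large deviations. This is plausible because under the uniform measure on Latin squares the $\binom{n}{2}^2$ potential intercalates are only weakly dependent. The plan is to prove matching upper and lower bounds on $\Pr[\mbf N\le(1-\delta)\mb E\mbf N]$.

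For the lower bound on this probability (the constructive direction), I would adapt the iterative absorption construction underlying \cref{thm:pasch-free} and \cref{thm:high-girth-latin-square}, as imported from \cite{KSSS22}. There, a Latin square is built up one symbol at a time---each symbol occupying a perfect matching between rows and columns---at each stage choosing a matching that avoids creating any new intercalate. Here one would instead, at each stage, allow each newly filled cell to participate in a new intercalate with a carefully tuned probability, so that the expected total count of intercalates in the final Latin square is $(1-\delta)\mb E\mbf N$; standard concentration should make the actual count typical, and a Stirling-type accounting---roughly, $\log\binom{\mb E\mbf N}{(1-\delta)\mb E\mbf N}$ choices for which intercalates appear, added to the logarithm of the appropriately renormalised high-girth completion count---should recover the conjectured rate. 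A complementary construction would be to ``plant'' a collection of $(1-\delta)\mb E\mbf N$ pairwise-disjoint intercalates and count intercalate-free completions of the resulting partial Latin square via a variant of \cref{thm:pasch-free}.

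The principal obstacle is the matching upper bound on the probability. The bound $\exp(-\Theta(n^2))$ from \cite{KSS21} (i.e.\ \cref{thm:lower-tail}) is only sharp up to a constant factor in the exponent, and pinning down the precise constant requires a much more delicate argument. The natural route is a change-of-measure / relative entropy argument: construct a tilted distribution $\mu_\delta$ on Latin squares under which the density of intercalates is $(1-\delta)$ times that under the uniform measure, and with relative entropy $H(\mu_\delta\,\|\,\mu_{\mathrm{unif}})\le(\delta+(1-\delta)\log(1-\delta)+o(1))\mb E\mbf N$; a Gibbs variational argument then yields the desired bound. Constructing $\mu_\delta$ and sharply controlling its relative entropy is hard because of the extreme rigidity of the uniform Latin square measure; a natural candidate for $\mu_\delta$ is the distribution produced by the construction in the preceding paragraph, but verifying that its KL-divergence matches the Poisson rate \emph{without slack} appears to be the crux of the problem. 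An alternative route is a switching or high-moment argument in the spirit of \cite{KSS21,HMS19}, though obtaining the exact Poisson constant (rather than just a $\Theta(1)$ multiplicative factor in the exponent) will likewise require a finely tuned amortised analysis.
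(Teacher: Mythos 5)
This statement is explicitly labeled a \emph{conjecture} in the paper, and the paper offers no proof of it: \cref{conj:lower-tail} is posed as an open problem in \cref{sub:intro-further}, refining \cref{thm:lower-tail} (which only pins down the exponent up to a $\Theta(1)$ factor). So there is no ``paper's own proof'' to compare against, and your proposal should be judged as a research plan for an open problem, not as a proof of a known result.

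Read as such, your sketch correctly identifies why the Poisson rate function $\delta+(1-\delta)\log(1-\delta)$ is the natural guess and correctly locates the crux: the lower bound on the probability looks within reach by planting intercalates and invoking an intercalate-avoiding completion scheme \`a la \cref{thm:pasch-free}/\cite{KSSS22} (the authors themselves remark that \cref{thm:pasch-free} — the $\delta=1$ endpoint — follows from the high-girth machinery with ``a simple calculation''). But what you write is not a proof of either direction. On the lower bound, you would still need to (i) show that a uniformly random intercalate-free completion of a planted collection of $(1-\delta)\mb E\mbf N$ disjoint intercalates contributes the claimed count of Latin squares up to $e^{o(n^2)}$ factors, and (ii) handle the double-counting between different planted collections, neither of which is carried out. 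On the upper bound, you acknowledge outright that there is no argument: a tilted measure $\mu_\delta$ with the right KL-divergence is asserted to exist but not constructed, and the \cite{KSS21}/\cite{HMS19} toolkit as it stands only gives $\exp(-\Theta(n^2))$. One small factual slip worth noting: the construction in \cite{KSSS22} is \emph{not} a symbol-by-symbol perfect-matching build as you describe; it is a high-girth random greedy \emph{triangle removal process} on $K_{N,N,N}$ followed by iterative absorption (cover-down plus an absorber). If one tried to tune a per-cell intercalate-creation probability as you suggest, it would have to be grafted onto that triangle-removal/absorption framework, not a matching-by-matching one, and it is far from clear the absorption step tolerates the resulting loss of control over intercalate counts. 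In short: this is an honest and reasonable attack plan for an open conjecture, but it contains genuine gaps on both sides, the upper bound especially, and you should not present it as a proof.
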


\begin{conjecture}\label{conj:upper-tail}
For every constant $\delta>0$, we have 
\[\Pr\left[\mbf{N}\ge(1+\delta)n^{2}/4\right]=\exp\left(-(1+o(1))\Phi\left(\delta\mb{E}\mbf{N}\right)\log n\right),\]
where $\Phi(N)$ is the minimum number of nonempty cells in a partial
Latin square with at least $N$ intercalates.
\end{conjecture}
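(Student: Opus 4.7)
The conjecture splits naturally into a lower bound $\Pr[\mbf N \ge (1+\delta)\mb E \mbf N] \ge \exp(-(1+o(1))\Phi(\delta \mb E \mbf N)\log n)$ and a matching upper bound. My plan is to handle these two directions by very different means: a planting construction for the lower bound, and a refinement of the Harel--Mousset--Samotij moment/entropic-stability machinery already used in \cref{thm:upper-tail} for the upper bound.

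For the lower bound, let $P$ be a partial Latin square achieving $\Phi(\delta \mb E \mbf N)$, that is, $P$ has $k:=\Phi(\delta \mb E \mbf N)$ filled cells and at least $\delta \mb E \mbf N$ intercalates. The goal is to show $\Pr[\mbf L \supseteq P] \ge n^{-(1+o(1))k}$, because on this event $\mbf L$ already contains the planted intercalates, and typically $\mbf L$ contains $(1+o(1))\mb E\mbf N$ intercalates outside the support of $P$ (this second step follows from concentration of $\mbf N$ around its mean, say via Azuma/switching arguments combined with \cref{thm:upper-tail}). The estimate $\Pr[\mbf L\supseteq P] \ge n^{-(1+o(1))k}$ should in turn follow from Keevash's completion/counting results (as used in \cite{Kwa20,KSS21}): given any ``spread'' partial Latin square on $k=o(n^2)$ cells, the proportion of Latin squares extending it matches the naive product estimate $\prod n^{-1}$ to leading order in the exponent. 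The mild issue is that the extremal $P$ may have cells concentrated in a few rows/columns/symbols, in which case one should first argue that $\Phi$ is essentially attained by ``quasirandom'' partial Latin squares (e.g.\ by noting that a cluster of intercalates on few rows/columns/symbols is itself constrained, so we may pass to a configuration where Keevash's hypotheses hold).

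The upper bound is the main obstacle. The proof of \cref{thm:upper-tail} already runs the Harel--Mousset--Samotij framework: one computes high moments of $\mbf N$ using row--column--symbol switchings, and combines this with an entropic-stability result saying that if $\mbf N \ge (1+\delta)\mb E \mbf N$ then whp there is a small ``core'' of cells capturing all the excess intercalates. To sharpen the exponent from $\Theta(n^{4/3}\log n)$ to $(1+o(1))\Phi(\delta \mb E\mbf N)\log n$, the plan is: (i) strengthen the stability statement so that the core is in fact a partial Latin square with essentially $\Phi(\delta \mb E\mbf N)$ cells supporting $\delta \mb E\mbf N$ intercalates, up to $(1+o(1))$ factors; (ii) replace the crude union bound over cores by a sharp enumeration of ``near-extremal'' partial Latin squares for $\Phi$ (bounding the entropy of the core by $o(\Phi \log n)$); and (iii) bound the conditional probability that $\mbf L$ extends a given near-extremal core by $n^{-(1-o(1))\Phi \log n}$, matching the lower-bound planting calculation. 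Step (i) is a precise ``localisation'' statement in the spirit of the BGSZ/Harel--Mousset--Samotij analyses; the difficulty is that in the Latin-square setting switchings are long-range, so the ``core'' initially produced is structural rather than a literal set of cells, and converting it into a partial Latin square on $\Phi(\delta\mb E\mbf N)$ cells requires a stability theorem for the extremal function $\Phi$ itself.

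The hardest single step will be (i)--(ii): a structural theorem describing partial Latin squares that nearly minimise $\Phi$ for a given intercalate count. Heuristically one expects the extremisers to look like multiplication tables of elementary abelian $2$-groups (or unions thereof), and the conjecture effectively says that these are essentially the only near-extremisers, both in the sense of counting and in the sense of typicality. Proving such a stability/classification result seems to require a new input beyond the techniques of \cite{KSS21} and the present paper; a reasonable approach is to set up a removal-type lemma for intercalate-rich partial Latin squares, showing that any such structure is $o(1)$-close to a disjoint union of $2$-group subtables, and then to count near-exact extenders via a hypergraph container / entropy compression argument.
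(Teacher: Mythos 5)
This statement is \cref{conj:upper-tail}, which the paper explicitly poses as an \emph{open conjecture}: there is no proof in the paper, so there is no ``paper's own proof'' to compare against. The closest thing the paper does prove is the rectangle analogue, \cref{thm:large-deviations-rectangle}, which establishes exactly the two-sided estimate $\Pr[\ic(\mbf L)\ge(1+\delta)k^2/4]=\exp(-(1+o(1))\Phi(\delta k^2/4)\log n)$ for random $k\times n$ Latin rectangles, but only when $k=o(n)$ and $k=\omega((\log n)^{3/2})$. Your overall architecture (planting for the lower bound via Keevash-type counting, and a Harel--Mousset--Samotij moment/core machine for the upper bound) is the natural blueprint and mirrors that rectangle argument, so the comparison is meaningful.

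However, your proposal misidentifies the bottleneck. You posit that the upper bound needs a stability/classification theorem for $\Phi$, saying near-extremisers are essentially elementary abelian $2$-group multiplication tables, and you expect to prove it via a removal lemma plus containers. The paper's rectangle argument requires nothing of the sort: the sharp union bound over ``cores'' is obtained not by classifying them but by the enumeration estimate \cref{clm:core-2} (the number of $(\delta,\varepsilon,C)$-cores with $m$ nonempty cells is $n^{o(m)}$), combined with $\Pr[Q\subseteq\mbf L]\le(O(1/n))^{|Q|}$ from \cite[Theorem~4.7]{GM90}. The only facts about $\Phi$ used are the crude growth bounds \cref{thm:Phi}(1) and \cref{thm:Phi}(4), nowhere near a stability theorem. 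The actual obstructions that stopped the authors at $k=o(n)$ lie elsewhere and your sketch does not address them: (a) the comparison lemma \cref{lem:comparison} compares the conditional law of $\ic(\mbf L)$ to an independent fill-in model $\mbf A(Q)$, and this comparison fails for $k=\Theta(n)$ because the long-range correlations of a full Latin square are no longer negligible; and (b) the deduction of \cref{thm:large-deviations} from \cref{thm:large-deviations-rectangle} pays factors of $\binom{n}{k}$ and $\exp(O(n(\log n)^2))$ (Bregman/Egorychev--Falikman change of measure) which are affordable at scale $\Theta(n^{4/3}\log n)$ but destroy the $(1+o(1))$ constant. Any serious attack on the conjecture should target these comparison-of-measures issues at $k=n$, rather than developing new extremal theory for $\Phi$.
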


We do not know the asymptotic value of $\Phi(N)$ in general (though
we do know this asymptotic value is $(4N)^{2/3}$ when $(4N)^{1/3}$ is
a power of two; see \cref{thm:Phi}), and this would also be interesting to investigate
further. Relatedly, it would also be interesting to find the asymptotics
of the maximum possible number of intercalates in an order-$n$ Latin
square (see \cite{Bar13,BCW14} for the current best bounds on this problem).

Regarding Latin subsquares of order greater than 2: it was conjectured by Hilton (see \cite[Problem~1.7]{DK91}) that for sufficiently large $n$ there exist order-$n$ Latin squares with no proper Latin subsquare at all (such Latin squares are said to have property ``$N_{\infty}$''). This conjecture remains open when $n=2^{a}3^{b}$ for $a\ge1$ and $b\ge0$ (see \cite{MWW07} and the references therein). Although a random order-$n$ Latin square typically has about $n^{2}/4$ intercalates, Kwan, Sah, and Sawhney \cite{KSS21} conjectured that the number of $3\times3$ subsquares has an asymptotic Poisson distribution with mean $O(1)$, and that almost all Latin squares have no Latin subsquares of order 4 or higher (see also \cite[Section~10]{MW99}). That is to say, intercalates are the ``hardest'' type of Latin subsquare to avoid. In fact, it seems plausible that the random construction used to prove \cref{thm:high-girth-latin-square} may have property $N_{\infty}$ with probability $\Omega(1)$, but an attempt to prove this would require deconstructing the proof of \cite[Theorem~1.1]{KSSS22} to a far greater extent than we do in this paper.

Finally, we reiterate that it would be interesting to determine the asymptotic minimum number of cuboctahedra in an order-$n$ Latin square (i.e., the ``least associative'' it is possible for a Latin square to be).

\subsection{Outline}\label{sub:outline}
In \cref{sec:upper-reduction} we reduce \cref{thm:upper-tail} to a large deviation problem for random Latin rectangles. In \cref{sec:upper-tail-upper} we prove an upper bound on upper tail probabilities for intercalates in random Latin rectangles, and in \cref{sec:large-deviations-lower} we sketch how to prove a corresponding lower bound (this is not necessary for the proof of \cref{thm:upper-tail}, but may be of independent interest). In \cref{sec:extremal-intercalates} we prove some basic facts about a certain extremal function $\Phi$, which features in \cref{sec:upper-reduction,sec:upper-tail-upper,sec:large-deviations-lower}. In \cref{sec:few-rows} we make some observations about intercalates in random Latin rectangles with very few rows; here we observe quite different behavior.

In \cref{sec:cuboctahedra} we explain how the ideas used to prove \cref{thm:upper-tail} can be generalized to a wide range of different configurations other than intercalates. Based on this, we explain how to prove the upper bound in \cref{thm:cuboctahedra} (though there is some tedious casework that we omit). We also discuss the machinery for probabilistic transference between Latin squares and the triangle removal process, and use this machinery to prove the lower bound in \cref{thm:cuboctahedra}.

Finally, in \cref{sec:high-girth} we explain how to adapt our work in \cite{KSSS22} to prove \cref{thm:high-girth-latin-square,thm:pasch-free}. This section is mostly targeted towards readers who have read \cite{KSSS22} or at least its proof outline, though we do provide a high-level summary of the overall approach.

\subsection{Notation}
We use standard asymptotic notation throughout, as follows. For functions $f=f(n)$ and $g=g(n)$, we write $f=O(g)$ or $f \lesssim g$ to mean that there is a constant $C$ such that $|f|\le C|g|$, $f=\Omega(g)$ to mean that there is a constant $c>0$ such that $f(n)\ge c|g(n)|$ for sufficiently large $n$, and $f=o(g)$ to mean that $f/g\to0$ as $n\to\infty$. Subscripts on asymptotic notation indicate quantities that should be treated as constants. Also, following \cite{Kee14}, the notation $f=1\pm\varepsilon$ means
$1-\varepsilon\le f\le1+\varepsilon$.

For a real number $x$, the floor and ceiling functions are denoted $\lfloor x\rfloor=\max\{i\in \mb Z:i\le x\}$ and $\lceil x\rceil =\min\{i\in\mb Z:i\ge x\}$. We will however mostly omit floor and ceiling symbols and assume large numbers are integers, wherever divisibility considerations are not important. All logarithms in this paper are in base $e$, unless specified otherwise.

\subsection*{Acknowledgements}
We thank Freddie Manners for suggesting the enumeration of cuboctahedra in random Latin squares. We thank Zach Hunter for pointing out typographical mistakes as well as a minor error in the statement of \cref{lem:reg}.

\section{Upper tails for intercalates: reducing to Latin rectangles}\label{sec:upper-reduction}
\newcommand{\ic}{N}
In the setting of \cref{thm:upper-tail}, the lower bound $\Pr[\mbf N\ge (1+\delta)\mb E \mbf N]\ge \exp(-O(n^{4/3}\log n))$ already appears as \cite[Theorem~1.1(d)]{KSS21}. So, to prove \cref{thm:upper-tail}, it suffices to prove the following upper bound.
\begin{theorem}\label{thm:large-deviations}
Fix a constant $\delta>0$. Let $\ic(\mbf{L})$
be the number of intercalates in a uniformly random order-$n$ Latin
square $\mbf{L}$. Then 
\[\Pr[\ic(\mbf{L})\ge(1+\delta)n^{2}/4]\le\exp\left(-\Omega_{\delta}(n^{4/3}\log n)\right).\]
\end{theorem}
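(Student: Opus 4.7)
The plan is to reduce \cref{thm:large-deviations} to a sharper large-deviation estimate for intercalates in a uniformly random $k \times n$ Latin rectangle, which is proved in \cref{sec:upper-tail-upper} using moment-method and entropic-stability arguments in the style of Harel--Mousset--Samotij. The motivation for passing to Latin rectangles is that a uniformly random Latin rectangle can be built row-by-row with more independence than a Latin square, so the moment computations and the attendant switching/surgery arguments become tractable. Specifically, the target I would state in this section is an estimate of the form
\[
\Pr[\ic(\mathbf{R}) \ge (1+\delta)\,\mathbb{E}\,\ic(\mathbf{R})] \le \exp\bigl(-\Omega_\delta(\Phi(\delta\,\mathbb{E}\,\ic(\mathbf{R}))\log n)\bigr),
\]
where $\mathbf{R}$ is a uniformly random $k \times n$ Latin rectangle (for $k$ in a suitable range up to $n$) and $\Phi$ is the extremal function from \cref{conj:upper-tail}. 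This is morally \cref{thm:large-deviations-rectangle}, and is sharper than \cref{thm:large-deviations} in that the exponent depends on the variable level $\Phi(\delta\,\mathbb{E}\,\ic)$ rather than on a fixed $n^{4/3}\log n$.

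Given such an estimate, the reduction in this section is short. A uniformly random order-$n$ Latin square is literally a uniformly random $n \times n$ Latin rectangle, so one applies the rectangle theorem at $k=n$. For the quantitative conclusion I would combine two inputs: the asymptotic expectation $\mathbb{E}\,\ic(\mathbf{L}) = (1+o(1))n^2/4$, established by Kwan, Sah, and Sawhney in \cite{KSS21}; and the extremal estimate $\Phi(\delta n^2/4) = \Theta_\delta(n^{4/3})$ that is recorded in \cref{thm:Phi} and proved in \cref{sec:extremal-intercalates}. Substituting these into the rectangle bound yields the desired $\exp(-\Omega_\delta(n^{4/3}\log n))$.

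The main conceptual subtlety of this section --- and the main obstacle --- is to pin down the precise formulation of the rectangle theorem so that on the one hand it is strong enough to recover \cref{thm:large-deviations} (and ideally the sharper $\Phi$-dependent exponent), and on the other hand it is in a form amenable to the moment-method proof of \cref{sec:upper-tail-upper}. In particular, one must calibrate the parameter $\delta$ relative to the multiplicative upper-tail level, account for the dependence of $\mathbb{E}\,\ic(\mathbf{R})$ on the shape of $\mathbf{R}$, and handle the boundary regime where $k$ is close to $n$ without losing anything in the exponent. Once this bookkeeping is in place, the deduction of \cref{thm:large-deviations} from the rectangle estimate is essentially immediate, but the careful matching of parameters to the $\Phi$-based exponent is where essentially all of the work of the reduction lies.
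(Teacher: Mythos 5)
Your proposal has a genuine gap at its core step. You propose to obtain \cref{thm:large-deviations} by applying the Latin-rectangle estimate at $k=n$, and you flag ``the boundary regime where $k$ is close to $n$'' as a subtlety to be handled, but you never actually handle it and then conclude the deduction is ``essentially immediate.'' It is not: \cref{thm:large-deviations-rectangle} is proved only under the hypothesis $k=o(n)$ (together with $k=\omega((\log n)^{3/2})$), and the paper explicitly states that it is \emph{unable} to prove the rectangle estimate at $k=n$ --- this is precisely why \cref{conj:upper-tail} is left as a conjecture. So you cannot simply ``apply the rectangle theorem at $k=n$''; the case $k=n$ is not available.

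The actual reduction is where all the work of this section lies, and it proceeds quite differently. One fixes $k=\varepsilon n$ with $\varepsilon$ small (so $k=o(n)$ is genuinely in force). If $\ic(\mathbf L)\ge(1+\delta)n^2/4$, then by averaging over $k$-subsets of rows there exists a set of $k$ rows spanning at least $(1+\delta)\tfrac{n^2}{4}\binom{k}{2}/\binom{n}{2}\ge(1+\delta/2)k^2/4$ intercalates. By symmetry one may take the first $k$ rows and pay a union-bound factor $\binom{n}{k}$. The remaining issue is that the first $k$ rows of a uniformly random order-$n$ Latin square are \emph{not} distributed as a uniformly random $k\times n$ Latin rectangle; one needs a change-of-measure argument, supplied by \cite[Proposition~4]{MW99} (Bregman and Egorychev--Falikman permanent estimates), which bounds the multiplicative discrepancy by $\exp(O(n(\log n)^2))$. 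Both this factor and the union-bound factor $\binom{n}{k}\le\exp(O(n\log n))$ are negligible against $\exp(-\Omega_\delta(k^{4/3}\log n))=\exp(-\Omega_\delta(n^{4/3}\log n))$. You should supply this averaging-plus-union-bound-plus-change-of-measure argument rather than invoking an unavailable $k=n$ case of the rectangle theorem.
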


We will deduce \cref{thm:large-deviations} from the following related theorem, which gives a \emph{sharp} upper tail estimate for random \emph{Latin rectangles}. A $k\times n$ Latin rectangle is a $k\times n$ array containing the symbols $\{1,\dots,n\}$, such that every symbol appears \emph{at most} once in each row and column.

\begin{theorem}\label{thm:large-deviations-rectangle}
Fix a constant $\delta>0$.
Let $\ic(\mbf{L})$ be the number of intercalates in a uniformly
random $k\times n$ Latin rectangle $\mbf{L}$, where $k=o(n)$
and $k=\omega((\log n)^{3/2})$. Let $\Phi(N)$ be the minimum number of nonempty cells in a partial Latin square with at least $N$ intercalates. Then 
\[\Pr\left[\ic(\mbf{L})\ge(1+\delta)\frac{k^{2}}4\right]=\exp\left(-(1+o(1))\Phi\left(\delta\frac{k^2}4\right)\log n\right).\]
\end{theorem}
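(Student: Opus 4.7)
The plan is to establish matching upper and lower bounds, the two being quite different in flavor. For the \emph{lower bound}, I would use a planting strategy. Fix an optimal partial Latin square $P$ with $\Phi = \Phi(\delta k^2/4)$ nonempty cells that contains at least $\delta k^2/4$ intercalates. Choose an embedding of $P$ into the $k\times n$ grid (a choice of rows, columns, and symbols consistent with the partial structure) and estimate the probability that $\mbf L$ agrees with this planted copy. By a standard permanent estimate for random Latin rectangles (in the regime $k = o(n)$ the probability of fixing a partial configuration on $\Phi$ cells is roughly $((1+o(1))/n)^{\Phi}$, provided $\Phi = o(n)$), and counting embeddings of $P$ into the $k\times n$ grid (at most $(kn)^{\Phi}$), one extracts a lower bound of $\exp(-(1+o(1))\Phi\log n)$. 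Because $\delta k^2/4 = o(n^2)$ under the hypotheses, the relevant $\Phi$ is small enough that these asymptotics are valid; the $\log n$ factor in the exponent comes directly from the $n^{-\Phi}$ in the per-cell conditioning cost.

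For the \emph{upper bound}, the plan is to adapt the high-moment/entropic-stability machinery of Harel, Mousset, and Samotij. The core strategy proceeds in three layers. First, I would compute a high moment $\mb E \ic(\mbf L)^t$ for $t$ chosen on the order of $\Phi(\delta k^2/4)\log n / \log\log n$, using the explicit permanent-based formula for the probability that a fixed collection of cells in a uniformly random $k\times n$ Latin rectangle carries a prescribed pattern of symbols. Second, I would prove an \emph{entropic stability} lemma: conditional on $\ic(\mbf L) \ge (1+\delta)k^2/4$, the excess intercalates must concentrate on a small ``hub'' of cells whose size is governed precisely by $\Phi$, in the sense that any partial Latin square witnessing the excess must have at least $\Phi(\delta k^2/4)(1-o(1))$ cells. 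Third, a containers-style enumeration produces, for each Latin rectangle with excess intercalates, a hub of this size; the number of hubs is controlled (up to an $\exp(o(\Phi\log n))$ factor) by the entropic-stability profile, and each individual hub contributes probability at most $\exp(-(1+o(1))\Phi\log n)$ via the same permanent estimate used in the lower bound. A union bound over hubs then yields the claimed upper tail.

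The main obstacle is the \emph{entropic stability} step. One needs to show that random Latin rectangle distributions are ``stable'' in the sense of Harel--Mousset--Samotij: roughly, that the conditional distribution on having many intercalates looks close (in a relative-entropy sense) to a planted model supported on a small hub. This requires translating the fairly soft stability framework, originally developed for independent random variables or $G(n,p)$-type models, into the Latin-rectangle setting where the coordinates are highly correlated. The hypotheses $k = o(n)$ and $k = \omega((\log n)^{3/2})$ are precisely what makes this translation possible: the first keeps the rectangle sparse enough that permanent asymptotics control joint probabilities of cell patterns, while the second provides enough ``averaging room'' along the rows/columns to prevent diffuse excess configurations from being competitive with clustered ones. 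Getting the constant factor in the exponent exactly right — i.e., the $1+o(1)$ in front of $\Phi(\delta k^2/4)\log n$ — requires a careful matching between the moment order $t$ and the hub size, and managing the degenerate overcounting in containers so that sub-dominant terms remain truly sub-dominant.

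Finally, once \cref{thm:large-deviations-rectangle} is in place, the deduction of \cref{thm:large-deviations}, and hence \cref{thm:upper-tail}, reduces (as anticipated in \cref{sec:upper-reduction}) to a transference step, showing that uniform order-$n$ Latin squares, when restricted to $k \approx n^{2/3}$ rows, behave like uniform $k \times n$ Latin rectangles for the purposes of counting intercalates in a typical row-window; this transference is standard and is not the main difficulty.
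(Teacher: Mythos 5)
Your overall plan is essentially the approach the paper takes: an HMS-style high-moment argument combined with a ``seed/core'' (your ``hub'') analysis for the upper bound, and a planting argument backed by permanent estimates for the lower bound. A few of the details, however, do not survive scrutiny.

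First, the moment order. You propose $t \asymp \Phi(\delta k^2/4)\log n / \log\log n$. In the paper the moment exponent is $\ell = \lfloor C k^{4/3}\log n/4\rfloor$ (with $C$ large), which is of order $\Phi(\delta k^2/4)\log n$ with \emph{no} $\log\log n$ division, since $\Phi(\delta k^2/4) = \Theta(k^{4/3})$. This matters: the ``no seed'' branch (your high-moment Markov step) only yields $\Pr \le \big(\tfrac{1+\delta-\varepsilon}{1+\delta}\big)^{t} = \exp(-\Omega_{\delta,\varepsilon}(t))$, so with $t \asymp \Phi\log n/\log\log n$ you only reach $\exp(-\Omega(\Phi\log n/\log\log n))$, which is \emph{weaker} than the target $\exp(-(1+o(1))\Phi\log n)$. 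The moment order must be at least $\Omega(\Phi \log n)$ for this branch not to be the bottleneck.

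Second, the lower bound as you phrased it has a gap. You plant an optimal $P$ with $\Phi$ cells and $\delta k^2/4$ intercalates, then claim the probability $\mbf L \supseteq P$ (estimated at $n^{-\Phi(1+o(1))}$ via a Godsil--McKay-type estimate) ``extracts'' the lower bound. But containing $P$ does not by itself force $\ic(\mbf L) \ge (1+\delta)k^2/4$: you must also show that, \emph{conditionally on} $P\subseteq\mbf L$, the background intercalate count is still concentrated near $k^2/4$. In the paper this is done by a short second-moment computation together with a version of \cref{lem:comparison}. Also, summing over embeddings (your $(kn)^\Phi$ count) is not how a lower bound on a probability is obtained — you condition on a single planted copy; the ``number of embeddings'' is relevant for computing $\mb E \mbf X$ for the \emph{number} of planted copies, but turning that into a probability bound needs Paley--Zygmund or similar, which you have not set up. It is cleaner to fix one copy, as the paper does.

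Third, you mix two distinct machineries — HMS-style entropic stability and ``containers-style enumeration'' — as if they were the same thing. The paper does not use containers; the core-counting argument (\cref{clm:core-2}) is bespoke: one bounds the number of cores with $m$ nonempty cells by $n^{o(m)}$ via a row/column/symbol-concentration lemma (\cref{lem:core-limited-symbols}) and a careful ``good ordered core'' encoding. This is exactly the place where the hypothesis $k = \omega((\log n)^{3/2})$ enters (to guarantee the number of symbol choices per step drops to $(\log n)^{O(1)}$), so deriving that hypothesis's role requires more than the appeal to ``averaging room'' you gesture at.

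These are repairable, and the architecture matches the paper's, but as written the proposal produces a lossy upper-tail bound and an incomplete lower-tail bound.
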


We remark that the assumption $k=\omega((\log n)^{3/2})$ is in fact necessary; qualitatively different behavior occurs for smaller $k$ (we show this in \cref{thm:log-necessary}). We believe that the assumption $k=o(n)$ is unnecessary (and that the conclusion holds even for $k=n$, as in \cref{conj:upper-tail}) but we are unable to prove this.

The estimate in \cref{thm:large-deviations-rectangle} (approximately $n^{-\Phi(\delta k^2\!/4)}$) should be interpreted as being essentially the probability that a specific partial Latin square with $\Phi(\delta k^2\!/4)$ nonempty cells and $\delta k^2\!/4$ intercalates is present in $\mbf L$. If we condition on this event, then the conditional expected number of intercalates becomes about $(1+\delta)k^2\!/4$.

Regarding the value of $\Phi(N)$, its order of magnitude is always $N^{2/3}$, and we are able to determine $\liminf_{N\to \infty}\Phi(N)/N^{2/3}$. However, it seems plausible that the asymptotic value of $\Phi(N)$ may in general depend on number-theoretic properties of $N$. It would be interesting to investigate this further.
\begin{theorem}\label{thm:Phi}
Let $\Phi(N)$ be the minimum number of nonempty cells in a partial Latin square with at least $N$ intercalates.
\begin{enumerate}
    \item $\Phi(N)\ge(1+o(1))(4N)^{2/3}$ for all $N$.
    \item $\Phi(N)=(1+o(1))(4N)^{2/3}$ when ${(4N)}^{1/3}$ is a power of two.
    \item $\Phi(N)\le(4+o(1))(4N)^{2/3}$ for all $N$.
    \item $\Phi(N+\varepsilon N)-\Phi(N)=o(N^{2/3})$ if $\varepsilon = o(1)$.
\end{enumerate}
\end{theorem}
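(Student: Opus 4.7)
The plan is to tackle the four parts in turn, with part (1) providing the key upper bound on intercalates, and parts (2)--(4) following via explicit constructions and a disjoint-union stability argument.

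\textbf{Part (1).} The claim is equivalent to: a partial Latin square on $m$ cells has at most $(1+o(1))m^{3/2}/4$ intercalates. The starting inequality is $2N \le T_R := \sum_r \binom{n_r}{2}$, which holds because for each ordered pair of same-row cells $(r,c_1,s_1),(r,c_2,s_2)$, the row $r'$ with $L_{r',c_1}=s_2$ is unique and then $(r',c_2,s_1)$ is forced, so at most one intercalate contains the pair. Symmetric arguments give $2N \le T_C := \sum_c \binom{m_c}{2}$ and $2N \le T_S := \sum_s \binom{p_s}{2}$. In the balanced regime where all of $\max_r n_r, \max_c m_c, \max_s p_s$ are at most $\sqrt m$, convexity gives $T_R \le m\sqrt m /2$ and hence $N \le m^{3/2}/4$. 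In the unbalanced regime, say $\max_r n_r = D > \sqrt m$, I would strip the heaviest row (contributing at most $(m-D)/2$ intercalates through it, since each remaining cell pairs with it in at most one intercalate) and induct on the residual $m-D$-cell PLS. The hard part will be keeping the accumulated slack $o(m^{3/2})$: the naive per-strip loss $O(m)$ summed over up to $\Theta(\sqrt m)$ strips exceeds the target $m^{3/2}/4$, so I expect a sharper argument is needed---either a stability phenomenon (one strip suffices to drop into the balanced regime, or to render the other two max degrees small) or a refinement of the $(m-D)/2$ estimate that uses the extant column/symbol degrees in the residual PLS.

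\textbf{Parts (3) and (4).} For (3), given $N$ take the smallest $\ell$ with $(2^\ell)^3/4 \ge N$, so $2^\ell<2(4N)^{1/3}$, and use the Cayley table of $(\mb Z/2\mb Z)^\ell$: with $L_{r,c}=r+c$, the intercalate equations $L_{r_1,c_1}=L_{r_2,c_2}$, $L_{r_1,c_2}=L_{r_2,c_1}$ collapse to $c_2=c_1+r_1+r_2$, so the involution $c\mapsto c+r_1+r_2$ pairs all $2^\ell$ columns into $2^{\ell-1}$ intercalates per row pair. Summing yields at least $N$ intercalates on $(2^\ell)^2<4(4N)^{2/3}$ cells. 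For (4), two partial Latin squares placed on disjoint row, column, and symbol sets admit no straddling intercalate, so their intercalate counts add, giving $\Phi(N+\varepsilon N)\le\Phi(N)+\Phi(\varepsilon N)\le\Phi(N)+(4+o(1))(4\varepsilon N)^{2/3}$ by (3); since $(4\varepsilon N)^{2/3}=\varepsilon^{2/3}(4N)^{2/3}=o(N^{2/3})$ when $\varepsilon=o(1)$, the claim follows.

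\textbf{Part (2).} When $(4N)^{1/3}=2^\ell$ exactly, the Cayley construction of (3) with this $\ell$ yields $N_0=\binom{2^\ell}{2}\cdot 2^{\ell-1}=N-O(N^{2/3})$ intercalates on exactly $(4N)^{2/3}$ cells. Since $N/N_0=1+O(N^{-1/3})=1+o(1)$, part (4) applies with $\varepsilon=o(1)$ to give $\Phi(N)\le\Phi(N_0)+o(N^{2/3})\le(4N)^{2/3}+o(N^{2/3})=(1+o(1))(4N)^{2/3}$, matching the lower bound from (1).
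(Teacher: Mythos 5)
Your parts (3) and (4) are correct and track the paper: the $(\mb Z/2\mb Z)^\ell$ Cayley table for the explicit upper bound, and disjoint placement for subadditivity $\Phi(N+\varepsilon N)\le\Phi(N)+\Phi(\varepsilon N)$. (A cosmetic nit on (3): $(2^\ell)^3/4\ge N$ does not quite give $\ge N$ intercalates since the exact count is $(2^\ell)^2(2^\ell-1)/4$; simply pick $2^\ell$ marginally larger, as the paper does with $(4N+N^{3/4})^{1/3}$. This is swallowed by the $o(1)$.) Part (2) is derived correctly from (1), (3), (4), and is in fact written out more carefully than the paper's rather terse sketch. But everything hinges on (1), and that is where your proposal has a genuine gap which you yourself flag.

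Your three inequalities $2N\le T_R,T_C,T_S$ are right, and the balanced case (all max degrees at most $\sqrt m$) goes through by convexity. The issue is that the unbalanced case is not a corner you can strip away: losing $O(m)$ per stripped heavy row over up to $\Theta(\sqrt m)$ rows swamps $m^{3/2}/4$, and I do not see a short repair. Having one heavy row does not by itself control $T_C$ or $T_S$, so one cannot simply pass to $\min(T_R,T_C,T_S)$; the three constraints interact only through a rather global balancing phenomenon, and there is no obvious inductive step that keeps the cumulative slack at $o(m^{3/2})$.

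The paper sidesteps the casework entirely by invoking the colored Kruskal--Katona theorem of Frankl--F\"uredi--Kalai (stated as \cref{thm:FFK}): a $3$-partite graph with at least $n^3$ triangles has at least $3n^2$ edges. One encodes the partial Latin square $Q$ as a $3$-partite graph $G$ on row/column/symbol vertices, placing a triangle on $\{r,c,s\}$ for each nonempty cell $(r,c,s)$; these $|Q|$ triangles are edge-disjoint, so $e(G)=3|Q|$. The crucial observation is that every intercalate contributes $4$ \emph{additional} triangles to $G$ --- the four complementary faces of the octahedron on its six vertices $\{r_1,r_2,c_1,c_2,s_1,s_2\}$ --- and these are distinct from the cell-triangles and across intercalates (each complementary face determines its intercalate). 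Hence $G$ has at least $4\ic(Q)$ triangles, and plugging $n=\lfloor(4\ic(Q))^{1/3}\rfloor$ into the theorem yields $|Q|\ge(1+o(1))(4N)^{2/3}$ with no degree casework whatsoever. If you want to keep an elementary flavor, you would need a balancing argument playing the role Kruskal--Katona plays here; as written, your part (1) is incomplete.
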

We will prove \cref{thm:Phi} in \cref{sec:extremal-intercalates}.

\cref{thm:large-deviations-rectangle} can be separated into an upper bound and a lower bound on ${\Pr[\ic(\mbf{L}) \ge (1+\delta)k^2\!/4]}$. Only the upper bound is necessary to prove \cref{thm:large-deviations}; we conclude this section with the deduction.

\begin{proof}[Proof of \cref{thm:large-deviations}, given the upper bound in \cref{thm:large-deviations-rectangle}]
Let $k=\varepsilon n$, and let $\mbf{L}_{\mr r}$ be a uniformly random $k\times n$
Latin rectangle, for $\varepsilon>0$ sufficiently small such that
\[
\Pr[\ic(\mbf{L_{\mr r}})\ge(1+\delta/2)k^{2}/4]\le\exp(-\Omega_{\delta}(k^{4/3}\log n))
\]
(such an $\varepsilon$ exists by the upper bound in \cref{thm:large-deviations-rectangle}, and \cref{thm:Phi}(1)). Now, if $\ic(\mbf{L})\ge(1+\delta)n^{2}/4$
then, by averaging, $\mbf{L}$ has a set of $k$ rows containing
at least 
\[
(1+\delta)\frac{n^{2}}4\left(\!\binom{k}{2}/\binom{n}{2}\!\right)
\]
intercalates. Let $\mbf{L}_{\le k}$ be the Latin rectangle consisting
of the first $k$ rows of $\mbf{L}$; by symmetry and a union bound we deduce 
\[
\Pr[\ic(\mbf{L})\ge(1+\delta)n^{2}/4]\le\binom{n}{k}\Pr[\ic(\mbf{L}_{\le k})\ge(1+\delta/2)k^{2}/4].
\]

Then, the desired result follows from \cite[Proposition~4]{MW99}
(which is a simple application of Bregman's inequality and the Egorychev--Falikman
inequality for permanents). Specifically, comparing between $\mbf{L}_{\le k}$ and $\mbf{L}_{\mr r}$, there is a multiplicative change of measure
of at most $\exp\big(O(n(\log n)^{2})\big)$ for any event.
\end{proof}

\section{Upper-bounding the upper tail in a random Latin rectangle}\label{sec:upper-tail-upper}
In this section we prove the upper bound in \cref{thm:large-deviations}, building on techniques developed by Harel, Mousset and Samotij~\cite{HMS19}. We first need some basic definitions and estimates.

\begin{definition}\label{def:partial-latin}
A \emph{partial Latin array} is an array of dimensions $k\times n$ with some cells filled with one of $n$ symbols, in which no symbol appears more than once in any row or column. For partial Latin arrays $S,S'$ with the same dimensions, we write $S\subseteq S'$ if $S$ and $S'$ agree on all cells where $S$ is nonempty. Let $|S|$ be the number of nonempty cells in $S$, and let $\ic(S)$ be the number of intercalates in $S$. We say that a partial Latin array is \emph{completable} if there is a complete Latin rectangle of the same dimensions containing it.
\end{definition}

\begin{lemma}\label{lem:comparison}
Suppose $k=o(n)$. Let $\mbf L$ be a uniformly random $k\times n$ Latin rectangle, and let $Q$ be a $k\times n$ completable partial Latin array with $\left|Q\right|=o(k\sqrt{n})$. Then 
\[\mb{E}[\ic(\mbf{L})\,|\,Q\subseteq\mbf{L}]\le\frac{k^{2}}{4}+\ic(Q)+o(k^{2}).\]
\end{lemma}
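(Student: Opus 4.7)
\emph{Plan.} My plan is to decompose intercalates of $\mbf{L}$ by their intersection with $Q$ and show that only the intercalates entirely inside $Q$ and those disjoint from $Q$ contribute non-negligibly. Writing $X_j$ for the number of intercalates $I$ of $\mbf{L}$ with $|I\cap Q|=j$, linearity of expectation gives
\[
\mb{E}[\ic(\mbf{L})\,|\,Q\subseteq\mbf{L}]=\sum_{j=0}^{4}\mb{E}[X_j\,|\,Q\subseteq\mbf{L}].
\]
On the event $Q\subseteq\mbf{L}$, every $2\times 2$ sub-array with all four cells in $Q$ is an intercalate of $\mbf{L}$ iff it is an intercalate of $Q$, so the $j=4$ term contributes exactly $\ic(Q)$. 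The $j=0$ term will supply the $k^2/4$ main contribution, and my goal is to bound the boundary terms $j\in\{1,2,3\}$ by $o(k^2)$.

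\emph{Boundary terms.} For each $j\in\{1,2,3\}$ I would enumerate potential configurations by first fixing the $j$ cells lying in $Q$ (whose symbols are thereby determined) and then choosing the remaining cells, using crude bounds of the form $\Pr[\mbf{L}_{r,c}=s\,|\,Q\subseteq\mbf{L}]=O(1/n)$ per additional symbol constraint imposed on a cell outside $Q$. The relevant subcases are: one $Q$-cell (at most $|Q|kn$ configurations, probability $O(n^{-2})$ for the remaining three cells to complete an intercalate); two $Q$-cells sharing a row or column (at most $|Q|^2$ such pairs by convexity, times a choice of a further row/column, probability $O(n^{-2})$); two $Q$-cells on opposite corners, which forces the two $Q$-cells to carry a common symbol (at most $|Q|k/2$ such same-symbol pairs since each symbol appears $\le k$ times, probability $O(n^{-1})$ that the remaining two cells agree); and three $Q$-cells forming an $L$-shape whose two non-adjacent corners carry a common symbol (at most $O(|Q|k)$ such triples, probability $O(n^{-1})$ for the final cell). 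Substituting the hypothesis $|Q|^2=o(k^2n)$ together with $k=o(n)$, each of these contributions is $o(k^2)$.

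\emph{Bulk term.} For $j=0$ I would reduce matters to the claim that, uniformly over $2\times 2$ sub-arrays $I$ of cells disjoint from $Q$,
\[
\Pr[I\text{ is an intercalate of }\mbf{L}\,|\,Q\subseteq\mbf{L}]\le(1+o(1))/n^2,
\]
after which summing over the $\binom{k}{2}\binom{n}{2}\sim k^2n^2/4$ sub-arrays yields $k^2/4+o(k^2)$. The unconditional estimate $(1+o(1))/n^2$ matches the known asymptotic $\mb{E}[\ic(\mbf{L})]=(1+o(1))k^2/4$; the substance here is that conditioning on $Q\subseteq\mbf{L}$ does not systematically inflate the probability. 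I would derive this from a Keevash-style enumeration estimate controlling the ratio $|\{L:(Q\cup I)\subseteq L\}|/|\{L:Q\subseteq L\}|$, which should remain sharp when $|Q|=o(k\sqrt n)$ even after adjoining the four extra cells of $I$.

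\emph{Main obstacle.} The boundary enumeration is routine given the assumed size of $Q$; the technical weight of the argument sits in the bulk-term comparison. Establishing the $(1+o(1))/n^2$ upper bound on the conditional intercalate probability uniformly in $I$ is where sharp Latin-rectangle enumeration must be brought to bear, and precisely why the threshold $|Q|=o(k\sqrt n)$ appears in the hypothesis.
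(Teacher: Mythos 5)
Your decomposition by $|I\cap Q|=j$ is essentially the same as the paper's, and your boundary-term arithmetic would close the gap \emph{if} the per-cell bound $\Pr[\mbf L_{r,c}=s\mid Q\subseteq\mbf L]=O(1/n)$ were uniformly valid. It is not, and that is exactly the obstacle the paper spends most of its proof of this lemma circumventing. The hypothesis $|Q|=o(k\sqrt n)$ permits a single row $r$ to contain $\Theta(n)$ nonempty cells of $Q$ once $k=\omega(\sqrt n)$, and then for the few remaining empty cells in that row the conditional symbol distribution is far from near-uniform; in the extreme case where only one cell in row $r$ is empty, its symbol is deterministic. This also falsifies your claimed uniform bulk estimate: if two rows $r_1,r_2$ each have exactly two empty columns $\{c_1,c_2\}$ and the two missing symbol pairs coincide, then conditionally the $2\times2$ subarray on $(r_1,r_2)\times(c_1,c_2)$ is an intercalate with probability $\Theta(1)$, not $(1+o(1))/n^2$. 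So the estimate ``uniformly over $I$ disjoint from $Q$'' cannot be established as stated; one has to argue instead that the \emph{aggregate} contribution from such anomalous rows is small.

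The paper handles this with a device your plan does not have: choose $\varepsilon=o(1)$ with $|Q|^2/(k^2n)=o(\varepsilon^2)$, declare a row \emph{bad} if it has more than $\varepsilon n$ nonempty cells of $Q$ (there are at most $|Q|/(\varepsilon n)$ such rows), and condition further on the full contents of the bad rows, obtaining a random extension $\mbf Q'\supseteq Q$. Every non-bad row of $Q'$ has at most $\varepsilon n$ filled cells, so an elementary switching argument (the paper's \cref{lem:comparison-simple}, valid when $2k+|Q_r|\le n/2$) gives the per-cell bound $\Pr[\mbf L_{r,c}=s\mid Q'\subseteq\mbf L]\le(1+O(\varepsilon+k/n))/n$, after which the forced-cell casework you sketch is carried out in the genuinely independent model $\mbf A(Q')$ (the paper's \cref{lem:expectation-count}). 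One then checks $\ic(\mbf Q')=\ic(Q)+o(k^2)$, using that the extra intercalates in $\mbf Q'$ all touch one of the $O(|Q|/(\varepsilon n))$ bad rows. Your suggestion to substitute a Keevash-style enumeration estimate for $|\{L:Q\cup I\subseteq L\}|/|\{L:Q\subseteq L\}|$ is a genuinely different route, but it is unlikely to be usable here: the sharp enumeration results quoted elsewhere in the paper (e.g.\ \cite[Theorem~4.7]{GM90}) require the partial Latin square to occupy $n^{o(1)}$ or polylogarithmically many columns, which is far stronger than $|Q|=o(k\sqrt n)$. Without the bad-rows device, both your boundary and bulk estimates break down; with it, the switching argument eliminates the need for any enumeration machinery.
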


To prove \cref{lem:comparison} we need a few auxiliary lemmas.
\begin{lemma}\label{lem:comparison-simple}
Let $Q$ be an completable $k\times n$ partial Latin array and let $\mbf{L}$ be a uniformly random $k\times n$ Latin rectangle. Consider a row-column pair $(r,c)$, and write $\mbf{L}_{r,c}$ for the symbol in the corresponding cell of $\mbf{L}$. Write $|Q_r|$ for the number of nonempty cells in row $r$ of $Q$ and suppose that $2k+|Q_r|\le n/2$. For any $s\in\{1,\dots,n\}$, and any cell $(r,c)$ which is empty in $Q$, we have
\[\Pr[\mbf{L}_{r,c}=s\,|\,Q\subseteq\mbf{L}]\le\frac{1}{n}+O\left(\frac{k+|Q_{r}|}{n^{2}}\right).\]
\end{lemma}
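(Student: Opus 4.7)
The plan is to condition on every row of $\mbf{L}$ except row $r$, reducing the lemma to a statement about the distribution of a single entry of a constrained uniformly random permutation. Concretely, let $\mbf{L}^{(-r)}$ denote the $(k-1)\times n$ Latin rectangle obtained from $\mbf{L}$ by deleting row $r$. Once we condition on $\mbf{L}^{(-r)}$ (together with the event $Q\subseteq\mbf{L}$), row $r$ is a uniformly random permutation $\pi\in S_n$ that (a) satisfies $\pi(c^\ast)=s^\ast$ for every prescribed $(r,c^\ast,s^\ast)\in Q$ and (b) satisfies $\pi(c')\notin F_{c'}$ for every column $c'$, where $F_{c'}$ denotes the set of symbols appearing in column $c'$ of $\mbf{L}^{(-r)}$. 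The structural facts I will use are that $|F_{c'}|\le k-1$ and, dually, that each symbol belongs to at most $k-1$ of the sets $F_{c'}$. Since the target bound is uniform in the conditioning, it suffices to prove it for every fixed admissible $\mbf{L}^{(-r)}$ and then average.

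Writing $\Omega$ for the set of admissible permutations above and $X_s := |\{\pi\in\Omega:\pi(c)=s\}|$, the second step is a swap/switching injection relating $X_s$ to $X_{s'}$ for each target symbol $s'$. For $\pi\in\Omega$ with $\pi(c)=s$ and $s'\ne s$, set $c':=\pi^{-1}(s')$ and define $\pi':=\pi\circ (c\;c')$, which transposes the values at columns $c$ and $c'$. One checks directly that $\pi'\in\Omega$ with $\pi'(c)=s'$ unless at least one of the following holds: $s'\in F_c$; $s\in F_{c'}$; or $(r,c')$ is prescribed by $Q$. The first possibility only concerns $s'$ with $X_{s'}=0$ and may be ignored once we restrict the sum to $s'\notin F_c$; for a fixed $\pi$, the other two conditions together exclude at most $(k-1)+|Q_r|$ values of $s'$, since $s$ lies in at most $k-1$ of the sets $F_{c'}$ and $Q$ prescribes exactly $|Q_r|$ columns of row $r$ (none equal to $c$).

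Summing the resulting inequalities $X_s \le X_{s'} + \mr{bad}(s,s')$ over $s'\notin F_c$ then yields
\[(n-|F_c|)\,X_s\,\le\, |\Omega|+(k-1+|Q_r|)\,X_s,\]
so $X_s/|\Omega|\le 1/(n-2(k-1)-|Q_r|)$. Under the hypothesis $2k+|Q_r|\le n/2$ the denominator is at least $n/2$, and a first-order expansion of $1/(1-x)$ for $x\le 1/2$ gives
\[\frac{X_s}{|\Omega|}\,\le\,\frac{1}{n}+O\!\left(\frac{k+|Q_r|}{n^2}\right),\]
which is the desired bound conditional on $\mbf{L}^{(-r)}$. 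Averaging this pointwise estimate against the conditional law of $\mbf{L}^{(-r)}$ given $Q\subseteq\mbf{L}$ completes the proof. I expect the only mildly delicate part to be the bookkeeping behind the bad-swap count; this follows cleanly from the Latin rectangle condition on $\mbf{L}^{(-r)}$ and the definition of $|Q_r|$, so I do not foresee a genuine obstacle.
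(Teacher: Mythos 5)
Your proof is correct and is essentially the paper's own argument: both use the same row-$r$ switching operation (swap cell $(r,c)$ with another cell in row $r$) and the same counting of bad swaps, arriving at the bound $1/(n-2(k-1)-|Q_r|)$. The only cosmetic difference is that you first condition on $\mbf{L}^{(-r)}$ and phrase the switching in terms of permutations before averaging, whereas the paper runs the identical injection directly as a bipartite-graph degree count between $\mc{L}_s$ and $\mc{L}$, bypassing the conditioning step.
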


\begin{proof}
Let $\mc{L}$ be the set of all $k\times n$ Latin rectangles $\mbf L$ for which $Q\subseteq\mbf{L}$, and let $\mc{L}_{s}\subseteq\mc{L}$ be the set of all such Latin rectangles $L$ for which $L_{r,c}=s$. Consider the auxiliary bipartite graph $\mc{H}$ with parts $A = \mc{L}_{s}$ and $B = \mc{L}$, where we put an edge between $L\in A$ and $L'\in B$ if $L'$ can be obtained from $L$ by swapping the contents of $L_{r,c}$ with some cell in row $r$. (The trivial swap is allowed; note that elements in $\mc{L}_s$ appear on both sides of the bipartition.)

In this auxiliary bipartite graph:
\begin{enumerate}
\item[(1)] every $L\in B$ has degree at most 1, and 
\item[(2)] every $L'\in A$ has degree at least $n-2(k-1)-|Q_{r}|$.
\end{enumerate}
To see (1), note that there is exactly one cell in row $r$ with symbol $s$ (swapping cell $(r,c)$ with this cell may or may not produce a Latin rectangle). To see (2), we consider all $n-|Q_{r}|$ possible swaps. Not all of these produce Latin rectangles: at most $k-1$ of the swaps bring a symbol into $(r,c)$ which already appears in column $c$, and at most $k-1$ of the swaps bring the symbol $x$ into a column which already contains it.

It follows from (1) and (2) that 
\[\Pr[\mbf{L}_{r,c}=s\,|\,Q\subseteq\mbf{L}] = \frac{\left|\mc{L}_{s}\right|}{\left|\mc{L}\right|}\le\frac{1}{n-2(k-1)-\left|Q_{r}\right|},\]
which implies the desired result since $2k+|Q_r|\le n/2$.
\end{proof}

\begin{lemma}\label{lem:expectation-count}
Let $Q$ be a $k\times n$ partial Latin array with $\left|Q\right|=o(k\sqrt{n})$. Let $\mbf{A}(Q)$ be the random array obtained from $Q$ by independently putting a uniformly random symbol from $\{1,\dots,n\}$ in each empty cell. Then the number of intercalates $\ic(\mbf{A}(Q))$ in it satisfies
\[\mb{E}[\ic(\mbf{A}(Q))]\le\frac{k^{2}}{4}+\ic(Q)+o(k^2).\]
\end{lemma}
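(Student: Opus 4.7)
The plan is to expand $\mb E[\ic(\mbf A(Q))]$ by linearity as a sum over all $\binom{k}{2}\binom{n}{2}$ potential intercalate quadruples $(\{r_1,r_2\},\{c_1,c_2\})$ of the probability of forming an intercalate, and to partition these quadruples according to $t \in \{0,1,2,3,4\}$, where $t$ is the number of their cells already filled by $Q$. The case analysis is then a matter of comparing the count of ``feasible'' quadruples at each $t$ with the conditional probability of the random cells completing an intercalate.

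The two extreme cases are essentially immediate. When $t=0$, the four cells of the quadruple are i.i.d.\ uniform, so the probability of forming an intercalate is $n(n-1)/n^4=(n-1)/n^3$, and the resulting contribution is at most $\binom{k}{2}\binom{n}{2}\cdot (n-1)/n^3 \le k^2/4$. When $t=4$, the probability is $1$ if and only if the quadruple is already an intercalate of $Q$, contributing exactly $\ic(Q)$.

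For the three remaining cases $t\in\{1,2,3\}$, the aim is to show each contributes $o(k^2)$. The conditional probability of being an intercalate given $t$ fixed cells is at most $1/n^{4-t}$, since the $4-t$ random cells must take specific values; the work lies in bounding the count of feasible quadruples. For $t=1$ this count is $O(|Q|\cdot k\cdot n)$. For $t=2$ in which the two filled cells share a row (respectively column), partial-Latinness gives $\sum_r |Q_r|^2 \le n|Q|$ (respectively $\sum_c |Q_c|^2 \le k|Q|$) via $|Q_r|\le n$ (respectively $|Q_c|\le k$), hence a count of order $kn|Q|$ in each of these sub-cases. Combined with the conditional probability these contribute $O(k|Q|/n)$ or $O(k^2|Q|/n^2)$, both $o(k^2)$ by the hypothesis $|Q|=o(k\sqrt n)$.

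The main obstacle is the ``diagonal'' subcase of $t=2$ and the case $t=3$: a naive count of L-shapes or diagonal pairs of $Q$-cells is too weak, so we must exploit that here the fixed cells can form an intercalate only if two of them lie on a diagonal carrying the \emph{same} symbol. Because $Q$ is a partial Latin array, each symbol appears in at most $\min(k,n)=k$ of its cells, so the number of such monochromatic diagonal pairs is $\sum_s\binom{|Q^s|}{2}\le k|Q|/2$. Extending to a feasible quadruple costs at most an additional factor of $n$ (for $t=2$) or $1$ (for $t=3$), and multiplying by the $1/n^{4-t}$ probability gives $O(k|Q|/n) = o(k^2)$ in both cases. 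Summing the contributions from $t=0,1,2,3,4$ then yields $k^2/4+\ic(Q)+o(k^2)$, as required.
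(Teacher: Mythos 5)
Your proposal follows the same blueprint as the paper's proof: expand by linearity, partition potential intercalates according to the number $t$ of cells already occupied by~$Q$, and bound each class by (count of feasible quadruples)\,$\times$\,(conditional probability). The $t=0$ and $t=4$ terms are handled identically. For $t\in\{2,3\}$ with a diagonal forced pair you use a slightly sharper count than the paper: the paper simply bounds the number of diagonal forced pairs by $|Q|^2$ and notes the conditional probability is at most $1/n$, while you exploit that a diagonal forced pair must be monochromatic, giving $\sum_s\binom{|Q^s|}{2}\le k|Q|/2$ via partial Latinness. Both routes give $o(k^2)$, and either casework is essentially equivalent.

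There is one genuine slip, though it turns out to be harmless. The blanket claim that the conditional probability is at most $1/n^{4-t}$ ``since the $4-t$ random cells must take specific values'' is not correct: two random cells sharing a diagonal do not each need to hit a pre-specified symbol --- they only need to agree with each other, contributing a factor $1/n$ rather than $1/n^2$. Thus for $t=1$ the true conditional probability is $\Theta(1/n^2)$, not $1/n^3$, and for a monochromatic diagonal pair at $t=2$ it is $\Theta(1/n)$, not $1/n^2$. In the $t=2$-diagonal case you silently compensate by inserting an ``extension factor of $n$'' (which is spurious --- a diagonal pair already determines the quadruple), so your final figure $O(k|Q|/n)$ happens to be right. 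In the $t=1$ case no such compensation is made, and your stated contribution $O(|Q|k/n^2)$ is a factor of $n$ too small; the correct bound is $O(|Q|k/n)$, which is still $o(k^2)$ under $|Q|=o(k\sqrt n)$, so the conclusion survives once the probability is corrected.
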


\begin{remark}
We assume that $2\times 2$ submatrices composed all of the same element in $\mbf{A}(Q)$ are counted as intercalates, though this will not matter to us.
\end{remark}

\begin{proof}
For an intercalate in $\mbf{A}(Q)$, say that one of its four entries was \emph{forced} if it appeared in $Q$.
\begin{itemize}
\item The contribution to $\mb{E}[\ic(\mbf{A}(Q))]$ from intercalates with four forced entries is $\ic(Q)$. 

\item The contribution to $\mb{E}[\ic(\mbf{A}(Q))]$ from intercalates with zero forced entries is at most $\binom{k}{2}\binom{n}{2}/n^{2}\le\frac{k^{2}}{4}$. Indeed, for any $2\times2$ subarray of empty cells in $Q$, the probability they form an intercalate in $\mbf{A}(Q)$ is $n^2/n^4$.

\item The contribution to $\mb{E}[\ic(\mbf{A}(Q))]$ from intercalates with one forced entry is at most $|Q|(k-1)(n-1)/n^{2}=o(k^{2})$. Indeed, for a given nonempty entry in $|Q|$, there are at most $(k-1)(n-1)$ ways to choose an additional row and column yielding a $2\times2$
subarray of $Q$ with 3 empty cells. For each such choice, the probability an intercalate is present in $\mbf{A}(Q)$ is $n/n^3$.

\item The contribution to $\mb{E}[\ic(\mbf{A}(Q))]$ from intercalates with precisely two forced entries that are in the same row or column is at most $2|Q|^{2}(n-1)/n^{2}\le2|Q|^{2}/n=o(k^{2})$. Indeed, for a given pair of nonempty entries in the same row or column of $Q$, there are at most $n-1$ ways to extend to a $2\times2$ subarray with 2 empty cells. For each such choice, an intercalate is present in $\mbf{A}(Q)$ with probability $1/n^{2}$.

\item The contribution to $\mb{E}[\ic(\mbf{A}(Q))]$ from intercalates with at least two forced entries in different rows and columns, and at least one unforced entry, is at most $|Q|^{2}/n=o(k^{2})$. Indeed, for a given pair of nonempty entries in different rows and columns of $Q$ the probability the corresponding $2\times2$ subarray forms an intercalate in $\mbf{A}(Q)$ is at most $1/n$. 
\end{itemize}
The desired result follows.
\end{proof}

Now we are ready to prove \cref{lem:comparison}.

\begin{proof}[Proof of \cref{lem:comparison}]
Choose $\varepsilon = o(1)$ such that $|Q|^2/(k^2n) = o(\eps^2)$ (which is possible since $|Q| = o(k\sqrt{n})$). There are at most $|Q|/(\varepsilon n)$ ``bad'' rows of $Q$ which have more than $\varepsilon n$ nonempty cells. In the conditional probability space given $Q\subseteq\mbf{L}$, let $\mbf{Q}'$ be obtained from $Q$ by adding all entries of $\mbf{L}$ in bad rows, and let $\mc{S}$ be the support of $\mbf{Q}'$. Then
\begin{align*}
\mb{E}[\ic(\mbf{L})\,|\,Q\subseteq\mbf{L}]&=\sum_{Q'\in\mc{S}}\Pr[\mbf Q'=Q']\mb{E}[\ic(\mbf{L})\,|\,Q'\subseteq\mbf{L}]\\
&=\sum_{Q'\in\mc{S}}\Pr[\mbf Q'=Q']\left(\ic(Q')+\mb{E}[\ic(\mbf L)-\ic(Q')|Q'\subseteq\mbf{L}]\right)\\
&\le\sum_{Q'\in\mc{S}}\Pr[\mbf Q'=Q']\left(\ic(Q')+(1+O(\varepsilon+k/n))\mb{E}[\ic(\mbf{A}(Q'))-\ic(Q')]\right)\\
&\le\sum_{Q'\in\mc{S}}\Pr[\mbf Q'=Q']\ic(Q')+(1+O(\varepsilon+k/n))\left(\frac{k^{2}}{4}+o(k^{2})\right).
\end{align*}
Here, in order to establish the first equality we have used linearity of expectation over the possible intercalates in $\mbf{A}(Q')$ which do not already appear in $Q'$. For the subsequent inequality we have used \cref{lem:comparison-simple} and for the final inequality we used \cref{lem:expectation-count}.

So, it suffices to prove that for any outcome $Q'\in\mc{S}$ we have $\ic(Q')=\ic(Q)+o(k^{2})$. To see this, first note that we always have $|Q'|\le|Q|/\varepsilon$. An intercalate appearing in $Q'$ but not in $Q$ is determined by a bad row and an entry of $Q'$ (namely, consider a row containing an element of $Q'\setminus Q$ in the intercalate and consider the opposite corner of the $2\times 2$ subarray), so there are at most $(|Q|/(\varepsilon n))\cdot(|Q|/\varepsilon) = o(k^2)$ such.
\end{proof}

We now begin to adapt some of the ideas of Harel, Mousset and Samotij~\cite{HMS19}.
\begin{definition}\label{def:seed}
Say a $k\times n$ partial Latin array $Q$ using the symbols $\{1,\dots,n\}$
is a $(\delta,\varepsilon,C)$-\emph{seed} if 
\begin{itemize}
\item $Q$ is completable,
\item $|Q|\le Ck^{4/3}\log n$, 
\item $\mb{E}[\ic(\mbf{L})\,|\,Q\subseteq\mbf{L}]\ge(1+\delta-\varepsilon)k^{2}/4$.
\end{itemize}
\end{definition}
Roughly speaking, these conditions say that $Q$ has few nonempty cells, but its appearance in $\mbf{L}$ is likely to dramatically increase the expected number of intercalates.

From now on we assume $k = o(n)$ and $k = \omega((\log n)^{3/2})$. Fix some $\varepsilon>0$ which is small in terms of $\delta$, and let $C$ be large in terms of $\delta,\varepsilon$ (small and large enough to satisfy certain inequalities later in the proof; then at the end of the proof we will take $\varepsilon\to0^+$ very slowly).

For a partial Latin array $L$, let $Z(L)$ be the indicator for the event that there is no $(\delta,\varepsilon,C)$-seed $Q\subseteq L$. The rest of the proof of the upper bound in \cref{thm:large-deviations-rectangle} boils down to the following two claims.
\begin{claim}
\label{clm:seed-1} $\Pr[\ic(\mbf{L})\ge(1+\delta)k^{2}/4\emph{ and }Z(\mbf{L})=1]\le\exp\left(-\Omega_{\delta,\varepsilon}(Ck^{4/3}\log n)\right).$ 
\end{claim}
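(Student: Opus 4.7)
The plan is to adapt the high-moment/entropic-stability strategy of Harel, Mousset, and Samotij~\cite{HMS19} to this Latin rectangle setting. I would set $t = \lfloor Ck^{4/3}\log n / 8\rfloor$ and expand $\ic(\mbf L)^t$ as a sum over ordered $t$-tuples $(I_1,\dots,I_t)$ of potential intercalates; for each tuple, write $Q_j = I_1\cup\cdots\cup I_j$ for the cumulative partial Latin array, noting that $|Q_j|\le 4t < Ck^{4/3}\log n$. The crucial observation is that whenever $I_1,\dots,I_t\subseteq\mbf L$ and $Z(\mbf L)=1$, each $Q_j\subseteq\mbf L$ is a completable partial Latin array of size less than $Ck^{4/3}\log n$ that fails to be a seed; by \cref{def:seed}, the only remaining way for $Q_j$ to fail the seed condition is that $\mb E[\ic(\mbf L)\mid Q_j\subseteq\mbf L]<(1+\delta-\varepsilon)k^2/4$.

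Using this, I would drop the indicator $\mathbbm 1_{Z=1}$ at the cost of restricting to tuples whose cumulative arrays are all non-seeds, and then reorganize as an iterated conditional sum:
$$\mb E\!\left[\ic(\mbf L)^t\mathbbm 1_{Z(\mbf L)=1}\right]\le \sum_{I_1:\,Q_1\text{ non-seed}}\Pr[I_1\subseteq\mbf L]\sum_{I_2:\,Q_2\text{ non-seed}}\Pr[I_2\subseteq\mbf L\mid Q_1\subseteq\mbf L]\cdots.$$
The identity $\sum_{I_j}\Pr[I_j\subseteq\mbf L\mid Q_{j-1}\subseteq\mbf L]=\mb E[\ic(\mbf L)\mid Q_{j-1}\subseteq\mbf L]$, together with the non-seed bound on $Q_{j-1}$ (and, for $j=1$, \cref{lem:comparison} applied to $Q=\emptyset$, which gives $\mb E\,\ic(\mbf L)\le (1+o(1))k^2/4$), telescopes through to yield $\mb E[\ic(\mbf L)^t\mathbbm 1_{Z(\mbf L)=1}]\le \big((1+\delta-\varepsilon)k^2/4\big)^{t}$.

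Markov's inequality then gives
$$\Pr\!\left[\ic(\mbf L)\ge (1+\delta)k^2/4,\,Z(\mbf L)=1\right]\le \left(\frac{1+\delta-\varepsilon}{1+\delta}\right)^{\!t}\le \exp\!\left(-\frac{\varepsilon t}{1+\delta}\right)= \exp\!\left(-\Omega_{\delta,\varepsilon}(Ck^{4/3}\log n)\right).$$
The only subtlety is the calibration of $t$: it must be small enough that $4t\le Ck^{4/3}\log n$ (so the cumulative unions $Q_j$ automatically satisfy the size condition in \cref{def:seed}, reducing the non-seed property to the conditional expectation bound), yet large enough for Markov to produce the promised exponent. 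The scale $t\asymp Ck^{4/3}\log n$ is the unique choice meeting both constraints, which is precisely what is being exploited in the statement of the claim.
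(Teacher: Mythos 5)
Your proposal is essentially identical to the paper's proof: both set the exponent $\ell$ (resp.\ $t$) of order $Ck^{4/3}\log n$ so that cumulative unions automatically satisfy the seed size bound, expand $\ic(\mbf L)^\ell Z(\mbf L)$ over tuples of potential intercalates, telescope via iterated conditional expectations, apply the non-seed property of each $Q_j$ to bound the conditional expectation by $(1+\delta-\eps)k^2/4$, and finish with Markov. The only cosmetic differences are the choice of $t=\lfloor Ck^{4/3}\log n/8\rfloor$ rather than $\lfloor Ck^{4/3}\log n/4\rfloor$, and that you make the base case $Q_0=\emptyset$ explicit (via \cref{lem:comparison}) where the paper absorbs it into the iteration.
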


\begin{claim}
\label{clm:seed-2} $\Pr[Z(\mbf{L})=0]\le\exp\left(-(1+o(1))\Phi((\delta-3\varepsilon)k^2/4)\log n\right)$.
\end{claim}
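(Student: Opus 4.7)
The plan is to union-bound $\Pr[Z(\mbf{L})=0]$ over all completable partial Latin arrays that could serve as a seed. First, I would translate the seed property into the structural information that every seed is ``intercalate-rich''. Since $|Q|\le Ck^{4/3}\log n = o(k\sqrt n)$, \cref{lem:comparison} applies and, combined with the defining inequality $\mb{E}[\ic(\mbf{L})\,|\,Q\subseteq \mbf{L}]\ge(1+\delta-\varepsilon)k^2/4$, yields $\ic(Q)\ge(\delta-2\varepsilon)k^2/4$ for $n$ large (absorbing the $o(k^2)$ error into $\varepsilon k^2/4$). By monotonicity of $\Phi$, every seed therefore satisfies $|Q|\ge\Phi(\ic(Q))\ge\Phi((\delta-3\varepsilon)k^2/4)=:m$.

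Second, for any fixed completable partial Latin array $Q$ with $|Q|=j\le Ck^{4/3}\log n$, I would bound
\[
\Pr[Q\subseteq\mbf{L}]\le n^{-j(1-o(1))}
\]
by exposing the cells of $Q$ one at a time in an arbitrary order; \cref{lem:comparison-simple} applies at each step (its hypothesis is satisfied because $j=o(n)$), giving a conditional probability of at most $(1+o(1))/n$.

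Third, I would perform the union bound by adapting the core-extraction strategy of Harel--Mousset--Samotij~\cite{HMS19}. The goal is to show that every realization of the event $Z(\mbf{L})=0$ can be witnessed by a ``core'' sub-array $Q^*\subseteq\mbf{L}$ with $|Q^*|=m$ and $\ic(Q^*)\ge(\delta-3\varepsilon)k^2/4$. Then
\[
\Pr[Z(\mbf{L})=0]\le\sum_{Q^*}\Pr[Q^*\subseteq\mbf{L}]\le(\text{number of cores})\cdot n^{-m(1-o(1))},
\]
and to conclude, I would show that the number of cores is $n^{o(m)}$ via a stability analysis complementing \cref{thm:Phi}: any near-extremizer of $\Phi$ at scale $N\asymp k^2$ should closely resemble a union of abelian $2$-group multiplication subtables of total order $\approx k^{2/3}$, so it is determined by $O(k^{2/3})=o(m)$ choices of rows, columns, symbols, and a labeling.

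The main obstacle is the core-extraction step itself, i.e., showing that every seed $Q$ contains a sub-array $Q^*$ of size approximately $m$ with $\ic(Q^*)\ge(\delta-3\varepsilon)k^2/4$. A naive greedy removal of low-participation cells can destroy a constant fraction of the intercalates when $|Q|$ is much larger than $m$ (the average number of intercalates per cell is only $\Theta(k^{2/3}/\log n)$, so stripping $\Theta(k^{4/3}\log n)$ cells would evaporate the entire intercalate budget). Instead, the stripping must exploit the fact that, at every intermediate stage, the conditional expectation $\mb{E}[\ic(\mbf{L})\,|\,Q'\subseteq\mbf{L}]$ must remain large---otherwise the intermediate $Q'$ itself fails the seed condition and can be dropped from the enumeration. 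This conditional-expectation constraint along the stripping path is what ultimately forces the cores into the structured form needed in the final count.
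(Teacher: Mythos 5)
Your first two steps essentially match the paper: you correctly deduce from \cref{lem:comparison} that every seed $Q$ has $\ic(Q)\ge(\delta-2\varepsilon)k^2/4$, and the bound $\Pr[Q\subseteq\mbf L]\le(O(1/n))^{|Q|}$ is precisely what the paper quotes from \cite[Theorem~4.7]{GM90}. The trouble is in your third step, and it is twofold.

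First, your worry about the core-extraction step is a false alarm, and the fact that you leave it as an impasse is a genuine gap. You argue that since the average participation of a cell in intercalates is $\Theta(k^{2/3}/\log n)$, stripping $\Theta(k^{4/3}\log n)$ cells would destroy $\Theta(k^2)$ intercalates. But the greedy stripping only ever removes cells whose current participation is \emph{below} a threshold you get to choose; in the paper the threshold is $\varepsilon k^2/4 / (Ck^{4/3}\log n)$, which for small $\varepsilon$ and large $C$ is far below the average. Since at most $Ck^{4/3}\log n$ cells can ever be stripped, the total intercalate loss is at most $Ck^{4/3}\log n \cdot \varepsilon k^2/4/(Ck^{4/3}\log n) = \varepsilon k^2/4$, so one passes from $\ic(Q)\ge(\delta-2\varepsilon)k^2/4$ to a core $Q'\subseteq Q$ with $\ic(Q')\ge(\delta-3\varepsilon)k^2/4$. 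No appeal to the conditional expectation along the stripping path is needed. Relatedly, you ask for the core to have $|Q^*|=m=\Phi((\delta-3\varepsilon)k^2/4)$ exactly; this is impossible in general (a generic seed will not strip down to a near-$\Phi$-extremizer), and the paper instead allows cores of any size between $m$ and $Ck^{4/3}\log n$ and sums $n^{o(m')}(O(1/n))^{m'}$ over all sizes $m'\ge m$.

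Second, your proposed enumeration is genuinely different from the paper's and is not a safe route. You want to show that cores are structured near-extremizers of $\Phi$, resembling unions of $(\mathbb{Z}/2\mathbb{Z})^k$ multiplication tables of total order about $k^{2/3}$. This stability statement is not established anywhere, and more importantly cores need not be anywhere near extremal for $\Phi$ (they can be $\Theta(\log n)$ times larger than $\Phi((\delta-3\varepsilon)k^2/4)$). The paper's enumeration (\cref{clm:core-2}) runs entirely on the fourth condition in the core definition (every nonempty cell participates in many intercalates): this forces the core to use few rows, columns, and symbols (\cref{lem:core-limited-symbols}), and then a random-ordering argument (\cref{clm:mostly-good}) shows that when the cells of the core are revealed in a typical order, the symbol in each newly revealed cell is pinned down to $\mathrm{polylog}(n)$ choices by the already-revealed intercalate structure. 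That gives the $n^{o(m')}$ count directly, without any stability theorem for $\Phi$.
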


Once these two are proven, taking $C$ sufficiently large in terms of $\delta,\varepsilon$ implies
\[\Pr[\ic(\mbf{L})\ge(1+\delta)k^2/4]\le\exp(-(1+o(1))\Phi((\delta-3\varepsilon)k^2/4)\log n).\]
Then, sending $\varepsilon\to 0^+$ slowly and using \cref{thm:Phi}(1) and \cref{thm:Phi}(4), we obtain the desired upper bound in \cref{thm:large-deviations-rectangle}.

\begin{proof}[Proof of \cref{clm:seed-1}]
Say a \emph{potential intercalate} is a $k\times n$ partial Latin array in which exactly four entries are nonempty, forming an intercalate. Say that a set of potential intercalates $\{I_1,\ldots,I_\ell\}$ are \emph{compatible} if (a) every pair agrees on all cells where they are both nonempty, in which case it makes sense to consider their \emph{union} $I_1\cup\cdots\cup I_\ell$, and (b) the union is completable. 

Let $\ell=\lfloor Ck^{4/3}\log n/4\rfloor$, and note that if $L'\subseteq L$ and $Z(L)=1$ then $Z(L')=1$ as well. We have 
\begin{align*}
\mb{E}[\ic(\mbf{L})^{\ell}\cdot Z(\mbf{L})] & \le\!\!\!\sum_{\substack{I_{1},\ldots,I_{\ell}\\
\text{compatible,}\\
Z\left(I_{1}\cup\dots\cup I_{\ell-1}\right)=1
}
}\!\!\!\Pr\left[I_{1}\cup\dots\cup I_{\ell}\subseteq\mbf{L}\right]\\
 &=\!\!\!\sum_{\substack{I_{1},\ldots,I_{\ell-1}\\
\text{compatible,}\\
Z\left(I_{1}\cup\dots\cup I_{\ell-1}\right)=1
}
}\!\!\!\Pr\left[I_{1}\cup\dots\cup I_{\ell-1}\subseteq\mbf{L}\right]\cdot\mb E\left[\ic(\mbf{L})\middle|I_{1}\cup\dots\cup I_{\ell-1}\subseteq\mbf{L}\right]\\
 & \le\!\!\!\sum_{\substack{I_{1},\ldots,I_{\ell-1}\\
\text{compatible,}\\
Z\left(I_{1}\cup\dots\cup I_{\ell-2}\right)=1
}
}\!\!\!\Pr\left[I_{1}\cup\dots\cup I_{\ell-1}\subseteq\mbf{L}\right]\cdot\big((1+\delta-\varepsilon)k^{2}/4\big).
\end{align*}
In the last step we use the definition of a seed: since $Z(I_1\cup\cdots\cup I_{\ell-1})=1$ we see $I_1\cup\cdots\cup I_{\ell-1}$ is not a $(\delta,\varepsilon,C)$-seed (despite satisfying the first two properties in the definition of a seed). We can iterate the above inequality $\ell$
times to deduce that 
\begin{align*}
\mb E[\ic(\mbf{L})^{\ell}\cdot Z(\mbf{L})] & \le\big((1+\delta-\varepsilon)k^{2}/4\big)^{\ell}.
\end{align*}
Recalling that $\ell=Ck^{4/3}\log n/4$, Markov's inequality
gives 
\begin{align*}
\Pr[\ic(\mbf{L})\ge(1+\delta)k^{2}/4\;\;\text{and}\;\;Z(\mbf{L})=1] & \le\frac{\mb{E}[\ic(\mbf{L})^{\ell}\cdot Z(\mbf{L})]}{\big((1+\delta)k^{2}/4\big)^{\ell}},
\end{align*}
from which the claimed bound follows. 
\end{proof}
Next, we turn to \cref{clm:seed-2}. It would be too lossy to
upper-bound $\Pr[Z(\mbf{L})=0]$ by simply taking a union bound
over all possible seeds $Q$. The next idea is to instead consider
``minimal'' subsets of seeds.
\begin{definition}\label{def:core}
Say a $k\times n$ partial Latin array $Q$ using the symbols $\{1,\dots,n\}$ is a $(\delta,\varepsilon,C)$-\emph{core} if 
\begin{itemize}
\item $Q$ is completable,
\item $|Q|\le Ck^{4/3}\log n$,
\item $\ic(Q)\ge(\delta-3\varepsilon)k^{2}/4$ (therefore $|Q|\ge \Phi((\delta-3\varepsilon)k^2/4)$),
\item For any $Q_{-}\subseteq Q$ obtained from $Q$ by emptying a single nonempty cell, we have 
\[\ic(Q)-\ic(Q_{-})\ge\frac{\varepsilon k^{2}/4}{Ck^{4/3}\log n}.\]
\end{itemize}
\end{definition}

Note that for every $(\delta,\varepsilon,C)$-seed $Q$, we have $|Q| = o(k\sqrt{n})$ and hence \cref{lem:comparison} shows that $\ic(Q)\ge(\delta-2\varepsilon)k^2/4$ (note this puts a bound on how fast $\varepsilon$ decays at the end of the argument). By iteratively emptying cells that violate the fourth condition, we can always obtain a $(\delta,\varepsilon,C)$-core $Q'\subseteq Q$.
So, to prove \cref{clm:seed-2} it suffices to upper-bound the probability of containing a core. To this end, we collect a few observations about cores.

From now on, it will be sometimes convenient to think of a $k\times n$
partial Latin array with symbols in $\{1,\dots,n\}$ as a $k\times n\times n$
zero-one array (with ``shafts'' in the third dimension being identified
with symbols), such that every row, column, and symbol has at most
one ``$1$''. That is to say, symbols really have basically the same
role as rows and columns.

\begin{claim}\label{clm:core-2}
If $k=\omega((\log n)^{3/2})$ the number of $(\delta,\varepsilon,C)$-cores with $m$ nonempty cells is at most $n^{o(m)}$.
\end{claim}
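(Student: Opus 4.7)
The minimality clause in \cref{def:core} says that removing any single cell from a $(\delta,\varepsilon,C)$-core $Q$ drops the intercalate count by at least $(\varepsilon k^{2}/4)/(Ck^{4/3}\log n)$, so every nonempty cell of $Q$ lies in at least $t:=\varepsilon k^{2/3}/(4C\log n)$ intercalates of $Q$. The hypothesis $k=\omega((\log n)^{3/2})$ is exactly what forces $t\to\infty$. My plan is to exploit this regularity via a probabilistic compression argument in the style of Harel--Mousset--Samotij~\cite{HMS19}: each core will be encoded by a small random sub-sample together with reconstruction data, and bounding the number of possible encodings bounds the number of cores.

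I would sample $S\subseteq Q$ by keeping each nonempty cell independently with probability $p=p(n)$, chosen so that $p\to 0$ while $tp^{3}\to\infty$ (both are possible because $t\to\infty$; for instance $p=t^{-1/4}$ works). For any fixed $c\in Q$, the probability that no intercalate of $Q$ through $c$ has its three other corners entirely in $S$ is at most $(1-p^{3})^{t}\le e^{-tp^{3}}=o(1)$. Since $\mb{E}|S|=pm$, Markov's inequality lets me fix a specific sample $S$ with $|S|\le 2pm=o(m)$ such that all but $o(m)$ cells of $Q\setminus S$ are $S$-\emph{deducible}, i.e.\ equal to the fourth corner of an intercalate of $Q$ whose other three corners all lie in $S$. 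Each core is then encoded by (a) the cells of $S$ together with their symbols, (b) for each $S$-deducible cell $c\in Q\setminus S$, a pointer to an intercalate of $Q$ through $c$ whose other three corners lie in $S$, and (c) explicit descriptions of the $o(m)$ exceptional cells. Pieces (a) and (c) together cost only $o(m)\cdot O(\log n)=o(m\log n)$ bits, contributing just a factor of $n^{o(m)}$ to the final count.

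The crux and main obstacle is to ensure that piece (b) also contributes only $o(m\log n)$ bits in total. Naively pointing to ``three cells of $S$'' costs $\approx 3\log|S|$ bits per deducible cell, which can be $\Theta(\log n)$ and would yield only $n^{O(m)}$. To beat this one must exploit the fact that an intercalate is determined by the much smaller data of a pair of rows, a pair of columns, and a pair of symbols: by describing the completing intercalate through two cells of $S$ in a distinguished configuration plus a single auxiliary coordinate, and by allowing cells already decoded in a canonical order to serve as pivots for the decoding of later ones (a closure-style refinement), one can drive the amortized per-cell cost of (b) below $\log n$. With $p$ tuned sharply to balance $|S|$, the number of exceptional cells, and the per-cell description cost, this yields the desired $n^{o(m)}$ bound.
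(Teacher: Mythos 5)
Your proposal correctly isolates the crucial consequence of minimality (each nonempty cell lies in at least $t \gtrsim_{\delta,\varepsilon,C} k^{2/3}/\log n$ intercalates of $Q$) and correctly notices that $k=\omega((\log n)^{3/2})$ is what makes $t\to\infty$. However, the argument stops at exactly the point you yourself flag as ``the crux and main obstacle,'' and that obstacle is genuine: in the worst case $m$ can be as large as $Ck^{4/3}\log n$ with $k$ up to $n$, so $\log|S|=\Theta(\log n)$, and spending $\Theta(\log n)$ bits per deducible cell yields only $n^{O(m)}$. The paragraph sketching a ``distinguished configuration plus auxiliary coordinate'' and ``closure-style'' pivots is an acknowledgment that a new idea is needed, not a derivation of one. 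As written, the encoding is not demonstrated to be subentropic, so the proof is incomplete.

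The paper closes this gap by a different mechanism. First it proves (\cref{lem:core-limited-symbols}) that the minimality condition forces a core to use only $u\lesssim_{\delta,\varepsilon,C} mk^{-2/3}\log n = o(m)$ distinct rows, columns, and symbols; this is what makes the raw entropy of cell \emph{positions} only $n^{o(m)}$. It then considers a uniformly random \emph{ordering} of the $m$ cells and shows (\cref{clm:mostly-good}) that with high probability, for all $i\geq m/(\log n)^2$, the $i$th cell already participates in $\Omega\bigl((i/m)^3 k^{2/3}/(\log n)^2\bigr)$ intercalates among the first $i$ cells. Because there are at most $u$ candidate symbols in total and the correct one must support that many potential intercalates, the number of symbol choices at each late step is at most $(\log n)^{O(1)}$. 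This adaptive, ordered reveal is what drives the per-cell cost below $\log n$; it is analogous to, but not the same as, pointing into a frozen sample $S$. Your $S$-compression scheme never exploits the bound on the number of supported rows/columns/symbols, and without it the pointer cost cannot be amortized below $\log n$. If you wanted to complete your approach, essentially you would have to rediscover both \cref{lem:core-limited-symbols} and the good-ordering argument, at which point it collapses into the paper's proof.

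A secondary issue worth naming: even if the pointer cost were controlled, your decoder must distinguish which of possibly several triples in $S$ is ``the'' intercalate determining a given cell, since different triples in $S$ can propose different symbols at the same location; a canonical tie-breaking rule does not work because the decoder cannot verify membership in $Q$ before $Q$ is fully reconstructed. The paper's ordered-reveal argument sidesteps this by never trying to single out one intercalate per cell; it only needs an entropy bound on the number of symbols compatible with a large intercalate count.
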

To prove \cref{clm:core-2} we need a few auxiliary observations.

\begin{lemma}\label{lem:core-limited-symbols}
There is $u_m\lesssim_{\delta,\varepsilon,C}mk^{-2/3}\log n$ such that every $(\delta,\varepsilon,C)$-core with $m$ nonempty cells contains at most $u_m$ nonempty rows, at most $u_m$ nonempty columns, and at most $u_m$ distinct symbols.
\end{lemma}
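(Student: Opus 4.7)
The plan is to use the minimality condition in the fourth bullet of \cref{def:core} to force every nonempty row, column, and symbol of a core $Q$ to be \emph{heavy} (i.e., contain many nonempty cells of $Q$); the $m$-cell budget then immediately upper-bounds the number of nonempty rows, columns, and distinct symbols.

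First I would rephrase the fourth core condition as a per-cell lower bound on intercalates. Removing a single nonempty cell $(r,c)$ from $Q$ destroys precisely those intercalates of $Q$ that contain $(r,c)$, so every nonempty cell of $Q$ lies in at least
\[
\frac{\varepsilon k^{2}/4}{Ck^{4/3}\log n}=\frac{\varepsilon k^{2/3}}{4C\log n}
\]
intercalates of $Q$.

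Next, I would fix a nonempty row $r$ with $d_r$ nonempty cells and bound the number of intercalates through row $r$. Every such intercalate has exactly two cells in row $r$, and these two cells determine the intercalate: if they are $(r,c_1)$ with symbol $s_1$ and $(r,c_2)$ with symbol $s_2$, then the other row $r'$ must satisfy $Q_{r',c_1}=s_2$, and by the partial-Latin constraint at most one such $r'$ exists. Hence at most $\binom{d_r}{2}$ intercalates pass through row $r$. Double-counting, and combining with the previous step,
\[
d_r\cdot\frac{\varepsilon k^{2/3}}{4C\log n}\le 2\binom{d_r}{2}=d_r(d_r-1),
\]
so $d_r\gtrsim_{\varepsilon,C}k^{2/3}/\log n$, where the hypothesis $k=\omega((\log n)^{3/2})$ is what ensures this lower bound is of the claimed order (rather than being dominated by the trailing ``$+1$''). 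Summing $d_r$ over the nonempty rows gives exactly the claimed bound $u_m\lesssim_{\varepsilon,C}m k^{-2/3}\log n$ for the number of nonempty rows.

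Finally, the analogous bounds for columns and for distinct symbols follow by the \emph{same} argument, applied in the other two axes of the $k\times n\times n$ zero-one-array representation of $Q$: each axis-aligned line contains at most one nonempty cell of $Q$, each intercalate is a $2\times 2\times 2$ subarray using exactly two lines in each direction, and the pair-of-cells-determines-the-intercalate argument gives ``at most $\binom{d_c}{2}$ intercalates through column $c$'' and ``at most $\binom{d_s}{2}$ intercalates through symbol $s$'' by the partial-Latin constraint. I do not anticipate a real obstacle; the only mildly delicate point is checking that the hypothesis $k=\omega((\log n)^{3/2})$ is exactly what makes each of the three degree lower bounds nontrivial.
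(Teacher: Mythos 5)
Your proposal is correct and takes essentially the same approach as the paper: both use the fourth ``core'' condition to force every nonempty row, column, and symbol to be heavy (having $\gtrsim k^{2/3}/\log n$ nonempty cells), and then divide $m$ by this heaviness. The only difference is bookkeeping: the paper observes directly that the $w$ intercalates through a single cell $(r,c)$ already give $w$ distinct other nonempty cells in row $r$ (since, by the partial-Latin constraint, two such intercalates cannot share the same second column), whereas you reach $d_r\ge w+1$ by double-counting over all cells of the row and comparing against $\binom{d_r}{2}$.
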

\begin{proof}
For any core $Q$, the fourth condition in the definition of a core
says that every nonempty cell in $Q$ participates in at least $w:=(\varepsilon k^{2}/4)/(Ck^{4/3}\log n)$ intercalates.
 
This implies in particular that every nonempty row of $Q$ contains at least $w$ nonempty entries, so $Q$ has at most $u_{|Q|}:=|Q|/w$ nonempty rows. A similar argument applies to columns and symbols.
\end{proof}
Now, say an \emph{ordered core} $\vec{Q}$ is a core $Q$ equipped
with an ordering $(r_{1},c_{1})\prec\dots\prec(r_{|Q|},c_{|Q|})$
of its nonempty cells. We write $\vec{Q}_{i}\subseteq\vec{Q}$ for
the partial Latin array obtained from $\vec{Q}$ by emptying all nonempty
cells except the first $i$ of them. We say that $\vec{Q}$ is \emph{good}
if for every $|Q|/(\log n)^2\le i\le |Q|$, there are at least $(i/|Q|)^{3}k^{2/3}/(\log n)^{2}$ intercalates in $\vec{Q}_{i}$ containing the cell $(r_{i},c_{i})$.

\begin{claim}\label{clm:mostly-good}
If $k\ge(\log n)^{15}$, then a $1-o(1)$ fraction of orderings of a $(\delta,\varepsilon,C)$-core $Q$ are good.
\end{claim}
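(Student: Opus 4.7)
The plan is to view the uniformly random ordering of the $|Q|$ nonempty cells of $Q$ as a uniformly random permutation. For each $i$, let $\nu_i$ denote the number of intercalates of $Q$ whose latest cell (in this ordering) sits at position $i$; equivalently, $\nu_i=|\{I\subseteq\vec Q_i : I \text{ is an intercalate in } Q,\ (r_i,c_i)\in I\}|$. The goal is to show that, with probability $1-o(1)$ over the permutation, $\nu_i\ge T_i:=(i/|Q|)^{3}k^{2/3}/(\log n)^{2}$ holds simultaneously for every $i\in[|Q|/(\log n)^{2},|Q|]$.

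The first step is an expectation computation: since $\nu_i$ counts intercalates whose four cell-positions have maximum exactly $i$, one gets $\mb{E}[\nu_i]=\ic(Q)\binom{i-1}{3}/\binom{|Q|}{4}\approx 4\,\ic(Q)\,i^{3}/|Q|^{4}$. Plugging in the core bounds $\ic(Q)\ge(\delta-3\varepsilon)k^{2}/4$ and $|Q|\le Ck^{4/3}\log n$ yields $\mb{E}[\nu_i]/T_i=\Omega_{\delta}((\log n)/C)$ uniformly across the range, so the expectation beats the target by a factor of order $\log n$. The hypothesis $k\ge(\log n)^{15}$ additionally ensures $\mb{E}[\nu_i]\gtrsim(\log n)^{3}$ in absolute terms, even at the smallest $i$ in the range.

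The second step is pointwise concentration of $\nu_i$ around its expectation with failure probability $o(1/|Q|)$; together with a union bound over the at most $|Q|\le Ck^{4/3}\log n$ values of $i$, this gives the claim. I would obtain such concentration via a higher-moment argument: expand $\mb{E}[\nu_i^{\,p}]$ for $p=\Theta(\log|Q|)$ as a sum over $p$-tuples of intercalates, and bound each term according to its overlap pattern. The fourth core property (every nonempty cell of $Q$ lies in at least $w=\varepsilon k^{2/3}/(4C\log n)$ intercalates) provides the near-regularity needed to rule out dominant contributions from cells of very high intercalate-degree, and combined with the absolute slack $\mb{E}[\nu_i]\gtrsim(\log n)^{3}$ gives $\Pr[\nu_i<T_i]\le\exp(-\omega(\log|Q|))$, comfortably beating the union-bound factor.

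The main obstacle will be this concentration step. Chebyshev alone is too weak to survive a union bound of size $|Q|$, and McDiarmid-type inequalities on the symmetric group fail directly because a single cell of $Q$ may participate in many intercalates, making $\nu_i$ very sensitive to individual transpositions. The regularity enforced by the core definition (every nonempty cell has intercalate-degree at least $w$) is exactly what allows a higher-moment calculation to succeed where second-moment methods do not; the quantitative hypothesis $k\ge(\log n)^{15}$ is calibrated so that the $\Theta(\log n)$-multiplicative slack and the $\Theta((\log n)^{3})$-absolute slack are simultaneously available across the entire range of $i$, which is what the moment computation requires.
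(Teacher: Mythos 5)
Your high-level calibration is sound and matches the paper's: you correctly identify $\nu_i$ (the number of intercalates of $\vec{Q}_i$ through $(r_i,c_i)$), the $\Theta(\log n/C)$ multiplicative slack between the expected count and the target $T_i$, and the role of $k\ge(\log n)^{15}$ in guaranteeing an absolute $(\log n)^3$ slack at the smallest $i$ in the range. However, your concentration plan has a genuine gap. You propose to show $\nu_i$ concentrates around its \emph{unconditional} mean $\mb{E}[\nu_i]\approx 4\,\ic(Q)\,i^3/|Q|^4$, and appeal to the fourth core property for ``near-regularity needed to rule out dominant contributions from cells of very high intercalate-degree.'' That property goes the wrong way: it only bounds each cell's intercalate-degree from \emph{below} by $w$, and a single cell can lie in up to $\Theta(|Q|)$ intercalates of $Q$. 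For a core whose intercalate mass is concentrated on one heavy cell, $\mb{E}[\nu_i]$ is dominated by the $1/|Q|$-probability event that this heavy cell lands at position $i$, and $\nu_i$ is far below $\mb{E}[\nu_i]$ with probability $1-1/|Q|$. So the concentration statement you want to prove is simply false in general, and no amount of higher-moment bookkeeping for $\mb{E}[\nu_i^p]$ can rescue it.

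The paper's argument differs in two linked respects, both missing from your sketch. First, it conditions on the identity of the cell $(r_i,c_i)$ and proves a one-sided conditional tail bound $\Pr[\nu_i<T_i\mid (r_i,c_i)=(r,c)]=n^{-\omega(1)}$ uniformly over cells $(r,c)$, so that only the minimum intercalate-degree $w\gtrsim_{\delta,\varepsilon,C}k^{2/3}/\log n$ enters, not the total $\ic(Q)$. Second, it observes a structural fact you do not use: two distinct intercalates through a fixed cell $(r,c)$ share no other cell. Indeed, if $L_{r,c}=s$, then each intercalate through $(r,c)$ is determined by its second symbol $t$ (which fixes the partner column $c'$ via row $r$ and the partner row $r'$ via column $c$), so distinct intercalates have distinct $r'$ and $c'$ and hence pairwise-disjoint sets of three remaining cells. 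After transferring to a binomial thinning of $Q\setminus\{(r,c)\}$ via Pittel's inequality, the survivals of the at least $w$ intercalates through $(r,c)$ become \emph{independent} Bernoulli indicators with success probability $\big((i-1)/(|Q|-1)\big)^3$; a single Chernoff bound then gives failure probability $\exp(-\Omega((\log n)^3))$, which comfortably survives the union bound over $i$. Without noticing this disjointness, your ``bound each term according to its overlap pattern'' step is where all the actual work would live, and the sketch does not carry it out.
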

\begin{proof}
Given a $(\delta,\varepsilon,C)$-core $Q$, define the random ordered core
$\vec{\mbf{Q}}$ by taking a uniformly random ordering of the nonempty cells
of $Q$. We wish to show that $\vec{\mbf{Q}}$ is good whp.

We will take a union bound over all $|Q|/(\log n)^2\le i\le |Q|$. So, fix such an $i$. Note that, given a choice of $(r_{i},c_{i})$, the remaining nonempty cells in $\vec{\mbf{Q}}_{i}$ comprise a uniformly random subset of $i-1$ other
nonempty cells of $Q$. It will be convenient to work with a closely related ``binomial'' random array: let $\mbf{Q}_{i,r,c}^\mr{bin}$ be obtained by starting with $Q$, and emptying each cell other than $(r,c)$ with probability $1-(i-1)/(|Q|-1)$. Any property that holds with probability $1-n^{-\omega(1)}$ for $\mbf{Q}_{i,r,c}^\mr{bin}$ also holds with probability $1-n^{-\omega(1)}$ for $\vec{\mbf{Q}}_{i}$ conditioned on the event $(r_{i},c_{i})=(r,c)$ (this follows from the so-called ``Pittel inequality''; see \cite[p.~17]{JLR00}). It will suffice to study $Q_{i,r,c}^\mr{bin}$.

Now, recall that in $Q$ there are at least $\Omega_{\delta,\varepsilon,C}(k^{2/3}/\log n)$ intercalates containing $(r,c)$. Each of these intercalates involves a disjoint set of three nonempty cells other than $(r,c)$, and is therefore present in $\vec{\mbf{Q}}_{i,r,c}^\mr{bin}$ with probability $((i-1)/(|Q|-1))^{3}$. These intercalates are disjoint from each other aside from the shared cell $(r,c)$. By $k\ge(\log n)^{15}$ and a Chernoff bound, the number of such intercalates in $\vec{Q}_{i}$ is at least $(i/|Q|)^{3}k^{2/3}/(\log n)^{2}$ with probability $1-n^{-\omega(1)}$.
\end{proof}

\begin{proof}[Proof of \cref{clm:core-2}]
Let $u=u_m\lesssim_{\delta,\varepsilon,C}mk^{-2/3}\log n$ be as in \cref{lem:core-limited-symbols}. We have $u = o(m)$ since $k = \omega((\log n)^{3/2})$. There are $\binom{k}{u}\binom{n}{u}^{2}\le n^{3u}=n^{o(m)}$ ways to choose sets of $u$ rows, columns, and symbols; we fix such a choice and count the good ordered $(\delta,\varepsilon,C)$-cores involving only those rows, columns and symbols (by \cref{lem:core-limited-symbols} a bound of $m!n^{o(m)}$ will suffice).

First, there are $\binom{u^2}m\le (eu^2/m)^m \le (O(m^2k^{-4/3}(\log n)^2)/m)^m\le(\log n)^{5m} = n^{o(m)}$ ways to choose a set of nonempty cells, and there are $m!$ ways to choose an ordering on these cells $(r_1,c_1),\ldots,(r_m,c_m)$. Therefore it suffices to show there are $n^{o(m)}$ ways to choose the symbols in these ordered cells to produce an ordered core.

If $k\le(\log n)^{15}$ we use the trivial bound that there are at most $u\le(\log n)^{O(1)}$ choices at each step. Overall, there are at most $(\log n)^{O(m)} = n^{o(m)}$ ordered cores on this ordered list of cells.

Otherwise, we bound the number of good ordered cores on this list of cells $(r_i,c_i)$. Then \cref{clm:mostly-good} will imply a bound on the number of ordered cores as desired. For the first $m/(\log n)^2$ cells, we still use the trivial bound that there are at most $u\le n$ choices. For $m\ge i\ge m/(\log n)^2$, given any choices for the previous cells, we observe that the number of ways to choose a symbol for the $i$th cell is at most
\[\frac{u}{(i/m)^{3}k^{2/3}/(\log n)^{2}}\le (\log n)^{11}.\]
Indeed, say a ``potential intercalate at step $i$ for symbol $s$'' is a set of three cells other than $(r_i,c_i)$ which have been filled in the previous $i-1$ steps and which would form an intercalate with $(r_i,c_i)$ if it were filled with the symbol $s$. There are at most $u$ potential intercalates at step $i$ in total (corresponding to the at most $u$ supported columns in our list of cells, say), and we must choose a symbol $s$ such that there are at least $(i/m)^{3}k^{2/3}/(\log n)^2$ potential intercalates for $s$.

It therefore follows that the number of good ordered cores on this list of ordered cells is bounded by $n^{m/(\log n)^2}(\log n)^{O(m)} = n^{o(m)}$ and we are done.
\end{proof}

Finally we prove \cref{clm:seed-2}, which completes the proof of the upper bound in \cref{thm:large-deviations-rectangle} as discussed earlier.

\begin{proof}[Proof of \cref{clm:seed-2}]
By \cite[Theorem~4.7]{GM90}, for any $k\times n$ partial Latin array $Q$ with at most $u = u_{Ck^{4/3}\log n}$ (from \cref{lem:core-limited-symbols}) nonempty columns, we have $\Pr[Q\subseteq \mbf{L}]\le (O(1/n))^{|Q|}$. For $m_0=\Phi((\delta-3\varepsilon)k^2/4)$, by \cref{clm:core-2} and the definition of $\Phi$ and cores, along with the fact that seeds contain cores, we have
\[
\Pr[Z(\mbf{L})=0]\le\sum_{m = m_0}^{Ck^{4/3}\log n}n^{o(m)}(O(1/n))^{m}\le \exp\left(-(1+o(1))\Phi((\delta-3\varepsilon)k^2/4)\log n\right),
\]
as desired.
\end{proof}

\section{Lower-bounding the upper tail in a random Latin rectangle}\label{sec:large-deviations-lower}
In this section we sketch how to prove the lower bound in \cref{thm:large-deviations-rectangle}. This is not necessary for the proof of \cref{thm:upper-tail} but may be of independent interest. We refer to some of the ideas in \cref{sec:upper-tail-upper}, which should be read first.
\begin{proof}[Proof sketch of the lower bound in \cref{thm:large-deviations-rectangle}]
Fix $\varepsilon>0$ and some $C > 0$, and let $Q$ be a partial Latin square with $(\delta+2\varepsilon)k^2/4$ intercalates and $|Q|=\Phi((\delta+2\varepsilon)k^2/4)$. As in the discussion directly after \cref{def:core} in \cref{sec:upper-tail-upper}, we can find a $(\delta,\varepsilon,C)$-core $Q'\subseteq Q$ with at least $(\delta+\varepsilon)k^2/4$ intercalates by iteratively removing elements that violate the fourth condition. Then by \cref{lem:core-limited-symbols}, we see that $Q'$ has at most $O_{\delta,\varepsilon,C}(k^{2/3}\log n)$ nonempty columns.

Now, \cite[Theorem~4.7]{GM90} says that $\Pr[M\subseteq\mbf L]$ is very close to $n^{-|M|}$ for any ``reasonably small'' partial Latin square $M$. It implies that
\[\Pr[Q'\subseteq\mbf L]\ge \left(\frac{1+o(1)}n\right)^{\Phi((\delta+2\varepsilon)k^2/4)}.\]
It also implies, with an easy second-moment calculation, that
\[\Pr\left[\ic(\mbf L)\ge(1+\delta)\frac{k^2}4\,\middle|\,Q'\subseteq \mbf L\right] = 1-o(1).\]
The desired result follows, taking $\varepsilon\to 0^+$ and using \cref{thm:Phi}(1,4).
\end{proof}

\section{Maximizing the number of intercalates}\label{sec:extremal-intercalates}
In this section we prove \cref{thm:Phi}. To prove \cref{thm:Phi}(1) we need the following extremal theorem, which follows
directly from the ``colored'' version of the Kruskal--Katona theorem
due to Frankl, F\"uredi, and Kalai~\cite{FFK88}.

\begin{theorem}\label{thm:FFK}
Let $\mc{F}$ be a 3-partite graph with at least $n^3$ triangles. Then
$\mc{F}$ has at least $3n^2$ edges.
\end{theorem}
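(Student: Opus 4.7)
My plan is to encode the triangles of $\mc F$ as an auxiliary 3-partite 3-uniform hypergraph and apply the colored Kruskal--Katona theorem of Frankl, F\"uredi, and Kalai to bound its shadows from below. Concretely, write $A$, $B$, $C$ for the three parts of $\mc F$ and let $\mc H$ be the 3-partite 3-uniform hypergraph whose edges are the triples $\{a,b,c\}$ with $a\in A$, $b\in B$, $c\in C$ forming a triangle in $\mc F$. The hypothesis gives $|\mc H|\ge n^3$.

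Next, for each pair of parts, say $(A,B)$, let $\partial_{AB}\mc H\subseteq A\times B$ be the set of pairs contained in some edge of $\mc H$, and define $\partial_{AC}\mc H$ and $\partial_{BC}\mc H$ analogously. The colored Kruskal--Katona theorem, applied to the 3-partite 3-uniform hypergraph $\mc H$, yields
\[|\partial_{AB}\mc H|+|\partial_{AC}\mc H|+|\partial_{BC}\mc H|\;\ge\;3\,|\mc H|^{2/3}\;\ge\;3n^2,\]
with equality in the ``colorful cube'' case where each part has size $|\mc H|^{1/3}$ and $\mc H$ is complete tripartite. Each pair counted by a shadow is by definition an edge of $\mc F$, and the three shadows live in different pairs of parts, so summing gives $|E(\mc F)|\ge 3n^2$.

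There is essentially no obstacle once the correct black box is invoked: the entire argument fits on a few lines, and the main task is just to identify the right extremal inequality. A close alternative would be to apply the tripartite Loomis--Whitney inequality $T^2\le e_{AB}\,e_{AC}\,e_{BC}$ directly to the set of triangles of $\mc F$ (where $T\ge n^3$ is the number of triangles and $e_{ij}$ counts edges of $\mc F$ between parts $i$ and $j$), followed by AM--GM to conclude $e_{AB}+e_{AC}+e_{BC}\ge 3T^{2/3}\ge 3n^2$; this route also yields the sharp constant $3$.
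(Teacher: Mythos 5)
Your main argument is exactly the route the paper takes: \cref{thm:FFK} is stated there as an immediate consequence of the Frankl--F\"uredi--Kalai colored Kruskal--Katona theorem, and you have simply spelled out that deduction --- form the $3$-partite $3$-uniform hypergraph $\mc H$ of triangles of $\mc F$, apply the colored Kruskal--Katona bound $|\partial_{AB}\mc H|+|\partial_{AC}\mc H|+|\partial_{BC}\mc H|\ge 3|\mc H|^{2/3}$ (tight for the complete $3$-partite ``cube''), and observe that each shadow pair is an edge of $\mc F$ in a distinct pair of parts. Your alternative via the tripartite Loomis--Whitney inequality $T^2\le e_{AB}\,e_{AC}\,e_{BC}$ followed by AM--GM is correct and genuinely different: it replaces a compression/shifting theorem with an elementary two-application-of-Cauchy--Schwarz argument and still yields the sharp constant $3$; what you give up is the finer extremal and near-extremal structure that Kruskal--Katona would provide, but none of that is needed for this lemma, so for the purpose at hand the two routes are interchangeable.
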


\begin{proof}[Proof of \cref{thm:Phi}(1)]
Fix three disjoint sets $R,C,S$ with $|R|=|C|=|S|$, such that the rows, columns, and symbols of $Q$ lie in $R,C,S$ respectively. Consider the 3-uniform tripartite graph $G$ with vertex set $R\cup C\cup S$, obtained by adding a triangle between a row $r\in R$, column $c\in C$ and symbol $s\in S$ whenever the $(r,c)$-entry of $Q$ contains $s$. Note that apart from the $|Q|$ triangles we directly added to form $G$ (which are all edge-disjoint), there are at least $4\ic(Q)$ other triangles in $G$ (the four triangles corresponding to an intercalate form four of the eight faces of an octahedron, and the four additional triangles forming the other four faces are unique to that intercalate). Note that $G$ has $3|Q|$ edges, so the desired result follows from \cref{thm:FFK} with $n = \lfloor(4\ic(Q))^{1/3}\rfloor$.
\end{proof}

\begin{proof}[Proof of \cref{thm:Phi}(2--4)]
For (2--3) we simply consider the Latin square corresponding to the multiplication table of an abelian 2-group $(\mb Z/2\mb Z)^k$, where $2^k$ is the smallest power of 2 bigger than $(4N+N^{3/4})^{1/3}$. It is easy to show (see for example \cite{BCW14}) that this Latin square has $(2^k)^2(2^k-1)/4\ge N$ intercalates.

For (4) we observe that given a partial Latin square with $\Phi(N)$ nonempty cells and $N$ intercalates, it is always possible to increase the number of intercalates by at least $\varepsilon N$ by adding a disjoint copy of the Latin square corresponding to the multiplication table $(\mb Z/2\mb Z)^k$, where $2^k=\Theta((\varepsilon N)^{1/3})$ is appropriately chosen.
\end{proof}

\section{Latin rectangles with very few rows}\label{sec:few-rows}
Now, we show that the assumption $k=\omega((\log n)^{3/2})$ in \cref{thm:large-deviations-rectangle} is in fact necessary. Note that $k^2\le k^{4/3}\log n$ precisely when $k\le (\log n)^{3/2}$.

\begin{theorem}\label{thm:log-necessary}
Fix a constant $\delta>0$.
Let $\ic(\mbf{L})$ be the number of intercalates in a uniformly
random $k\times n$ Latin rectangle $\mbf{L}$, where $k=n^{o(1)}$ and $k = \omega(1)$. Then 
\[\Pr\left[\ic(\mbf{L})\ge(1+\delta)\frac{k^{2}}4\right]\ge\exp\left((\delta-(1+\delta)\log(1+\delta)-o(1))\frac{k^2}{4}\right).\]
\end{theorem}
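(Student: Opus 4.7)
The target matches a Poisson large-deviation lower bound: for $\mu = k^2/4$ and $X \sim \mr{Poisson}(\mu)$, Stirling applied to $\Pr[X = \lceil(1+\delta)\mu\rceil]$ gives $\Pr[X \ge (1+\delta)\mu] \ge \exp((\delta - (1+\delta)\log(1+\delta))\mu - O(\log\mu))$; since $\mu = k^2/4 \to \infty$ (because $k = \omega(1)$), the $O(\log\mu)$ is absorbed into the $o(1)$ in the exponent. My plan is to establish the same Poisson LDP for $N$ at scale $\mu$ via an exponential (Esscher) tilt, powered by \cite[Theorem~4.7]{GM90}: for every completable partial Latin rectangle $Q$ of size $o(k\sqrt n)$, $\Pr[Q\subseteq\mbf L] = (1\pm o(1))^{|Q|}n^{-|Q|}$, and since $\mu = n^{o(1)}$ every partial structure I will need (with $O(\mu)$ cells) lies comfortably in this regime.

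Set $t_\star = \log(1+\delta)$, so $e^{t_\star}-1 = \delta$. Let $\mc I$ be the set of potential intercalates and let $X_I$ be the indicator that $I\in\mc I$ is present in $\mbf L$. Expanding $e^{t_\star N} = \prod_{I\in\mc I}(1+\delta X_I)$ yields
\[\mb E[e^{t_\star N}] = \sum_{S\subseteq \mc I}\delta^{|S|}\Pr[\text{all intercalates in }S\text{ are present in }\mbf L].\]
The first key step is to show $\mb E[e^{t_\star N}] \ge \exp(\delta\mu(1+o(1)))$ by isolating the dominant contribution from $m$-subsets $S$ whose intercalates occupy pairwise disjoint column-pairs: a direct count gives $(1+o(1))^m(k^2n^2/4)^m/m!$ such subsets, each present with probability $(1+o(1))^m n^{-2m}$ by Godsil--McKay (applied to the $4m$-cell completable partial Latin rectangle obtained from $S$ after summing over the $(n(n-1))^m$ symbol assignments), so summing over $m$ produces $\exp(\delta\mu(1+o(1)))$. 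Contributions from subsets with column collisions are smaller by a factor $O(1/n)$ per collision and are therefore negligible.

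With the MGF in hand, introduce the Esscher-tilted law $\mb Q[N = m] := e^{t_\star m}\Pr[N = m]/\mb E[e^{t_\star N}]$. Differentiating the MGF (equivalently, repeating the moment argument above for $Ne^{t_\star N}$ and $N^2 e^{t_\star N}$) gives $\mb E_{\mb Q}[N] = (1+\delta)\mu(1+o(1))$ and $\mr{Var}_{\mb Q}(N) = O(\mu)$, so Chebyshev yields $\mb Q[(1+\delta)\mu \le N \le (1+\delta)\mu + \sqrt\mu\log\mu] = \Omega(1)$. Reversing the tilt,
\[\Pr[N \ge (1+\delta)\mu] \ge \Omega(1)\cdot e^{-t_\star((1+\delta)\mu + \sqrt\mu\log\mu)}\cdot\mb E[e^{t_\star N}] = \exp\!\left((\delta-(1+\delta)\log(1+\delta)-o(1))\mu\right),\]
giving the claim (here we used $t_\star\sqrt\mu\log\mu = o(\mu)$, which holds because $\mu\to\infty$).

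\textbf{Main obstacle.} The principal technical work is controlling the error in the MGF computation: verifying that the ``column-disjoint'' configurations indeed dominate both $\mb E[e^{t_\star N}]$ and the tilted moments $\mb E[Ne^{t_\star N}]$, $\mb E[N^2 e^{t_\star N}]$ requires a careful union bound over the collision patterns that can arise between intercalates of $S$, with each pattern handled via Godsil--McKay to reduce to a simple cell-count. The computations parallel \cref{lem:expectation-count,lem:comparison-simple} (but now in the exponentially-tilted regime rather than a single-moment computation); the saving grace is that $\mu = n^{o(1)}$, which keeps every partial Latin rectangle we touch well inside the $o(k\sqrt{n})$ regime where Godsil--McKay gives the $(1\pm o(1))^{|Q|}$ estimate.
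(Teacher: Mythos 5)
Your plan is to establish a Poisson-type moment generating function estimate $\mathbb E[e^{t_\star N}] = \exp((1+o(1))\delta\mu)$ at $t_\star = \log(1+\delta)$ and then run an Esscher tilt. The critical claim, which you flag as the ``main obstacle'' but treat as a routine bookkeeping step, is that collision configurations contribute negligibly to $\mathbb E[e^{t_\star N}]$ and to the tilted moments. This claim is in fact \emph{false} for $\delta>1$, and the approach cannot be salvaged in the form stated.

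To see why, note that the expansion $\mathbb E[e^{t_\star N}] = \sum_{S\subseteq\mathcal I}\delta^{|S|}\Pr[S\subseteq\mathbf L]$ runs over \emph{all} subfamilies $S$, including ones of size far beyond $O(\mu) = O(k^2)$ in which the intercalates are tightly packed. Take $S$ to be the family of $m\approx 2^{3j}/4$ intercalates inside a $(\mathbb Z/2\mathbb Z)^j$ subtable with $2^j\approx k$ (so $m=\Theta(k^3)$, using only $\Theta(k^2)$ cells). By Godsil--McKay, $\Pr[S\subseteq\mathbf L]\approx n^{-(4m)^{2/3}}\approx n^{-k^2}$, and the number of such placements is $n^{O(k)}$, so the contribution to the MGF from these families is $\exp\!\bigl(\Theta(k^3)\log\delta - (1+o(1))k^2\log n\bigr)$. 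For constant $\delta>1$ (so $\log\delta>0$) and any $k=\omega(\log n)$ --- a regime permitted by $k=n^{o(1)}$, $k=\omega(1)$ --- this is $\exp(\Theta(k^3))$, which swamps your target $\exp((1+o(1))\delta k^2/4)$. Consequently $\mathbb E_{\mathbb Q}[N]=\Theta(k^3)\neq(1+\delta)\mu(1+o(1))$, the Chebyshev concentration window you use is nowhere near the tilted mean, and the ``reverse tilt'' step yields nothing. This is exactly the infamous-upper-tail phenomenon the paper emphasizes: the distribution of $N$ has a heavy clustered tail that the global exponential tilt cannot avoid. (For $\delta\le1$ the clustered terms do stay negligible, so your plan would in principle go through there --- but the theorem is claimed for all constant $\delta>0$.)

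The paper's proof sketch takes a fundamentally different route that sidesteps this. Rather than reweighting the whole measure exponentially in $N$, it plants a single \emph{spread-out} partial Latin array $Q$ consisting of $m\approx(\delta+2\varepsilon)k^2/4$ column- and symbol-disjoint potential intercalates, lower-bounds $\Pr[Q\subseteq\mathbf L]\gtrsim n^{-4m}$ via Godsil--McKay, and then shows by a conditional second-moment computation (in the spirit of \cref{lem:comparison,lem:expectation-count}) that given $Q\subseteq\mathbf L$ the number of \emph{other} intercalates concentrates near $k^2/4$, so that $\ic(\mathbf L)$ lands in a narrow window around $(1+\delta)k^2/4$ with probability $1-o(1)$. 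The overcounting from different planted families reaching the same outcome is then paid off via the ratio of $\binom{n^2(n-1)^2k(k-1)/4}{m}$ against $\binom{(1+\delta+4\varepsilon)k^2/4}{m}$, which produces precisely the Poissonian exponent after $\varepsilon\to 0^+$. In effect, this is a \emph{local} change of measure (tilting toward one good configuration) rather than a global Esscher tilt of $N$; the former never integrates over the clustered tail. If you want to keep an MGF-flavored argument you would have to truncate or condition away the clustered events before tilting, at which point the factorization $e^{t_\star N}=\prod_I(1+\delta X_I)$ is lost and you are essentially back to planting.
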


The bound in \cref{thm:log-necessary} is essentially the probability that a Poisson distribution with mean $k^2/4$ is at least $(1+\delta)k^2/4$. We suspect that a matching upper bound holds whenever $k=o((\log n)^{3/2})$ and $k = \omega(1)$. For very slowly growing $k$ this follows from \cite[Theorem~3]{MW99}, which shows that for fixed $k$ the number of intercalates in a uniformly random $k\times n$ Latin rectangle has distribution limiting to $\mr{Poi}(k(k-1)/4)$ as $n\to\infty$.

\begin{proof}[Proof sketch of \cref{thm:log-necessary}]
Fix $\varepsilon>0$. Say a collection of $m := \lceil(\delta+2\varepsilon)k^2/4\rceil$ potential intercalates is \emph{good} if no pair of these intercalates shares a column, or symbol. An easy calculation shows that the number of good collections of intercalates is $(1-o(1))\binom{n^2(n-1)^2k(k-1)/4}{m}$ since $k = n^{o(1)}$.

Fix a $k\times n$ partial Latin array $Q$ obtained by taking the union of $m$ intercalates in a good collection. Let $\ic^\ast(\mbf L,Q)$ be the number of intercalates in $\mbf L$ using no entry of $Q$. An argument similar to the proof of \cref{lem:comparison} ignoring intercalates which have no entries in $Q$ shows
\[\mb{E}[\ic(\mbf{L})-\ic(Q)-\ic^\ast(\mbf{L},Q)|Q\subseteq\mbf{L}] = o(k^2).\]

Markov's inequality then shows
\begin{align*}&\Pr\left[(1+\delta)\frac{k^2}{4}\le\ic(\mbf L)\le (1+\delta+4\varepsilon)\frac{k^2}{4}\,\middle|\,Q\subseteq\mbf L\right]\\
&\qquad\qquad\ge\Pr\left[(1-\varepsilon)\frac{k^2}{4}\le\ic^\ast(\mbf L,Q)\le(1+\varepsilon)\frac{k^2}{4}\,\middle|\,Q\subseteq\mbf L\right]-o(1).
\end{align*}
An easy second-moment calculation using \cite[Theorem~4.7]{GM90} (which says that $\Pr[M\subseteq \mbf L]$ is very close to $n^{-|M|}$ for any ``reasonably small'' partial Latin square $M$), and counting similar to the proof of \cref{lem:expectation-count}, we find
\[\Pr\left[(1-\varepsilon)\frac{k^2}{4}\le\ic^\ast(\mbf L,Q)\le (1+\varepsilon)\frac{k^2}{4}\middle|Q\subseteq \mbf L\right]=1-o(1).\]
Then \cite[Theorem~4.7]{GM90} yields
\begin{align*}
&\Pr\left[Q\subseteq \mbf L\text{ and }(1+\delta)\frac{k^2}{4}\le\ic(\mbf L)\le (1+\delta+4\varepsilon)\frac{k^2}{4}\right]\\
&\qquad\qquad\qquad\qquad\quad=\Pr\left[Q\subseteq \mbf L\right]\Pr\left[(1+\delta)\frac{k^2}{4}\le\ic(\mbf L)\le(1+\delta+4\varepsilon)\frac{k^2}{4}\,\middle|\,Q\subseteq \mbf L\right]\\
&\qquad\qquad\qquad\qquad\quad\ge(1-o(1))\left(\frac{1-o(1)}{n^4}\right)^m.
\end{align*}

Let $\mbf X$ be the number of size-$m$ good collections of intercalates in $\mbf L$, so
\begin{align*}
\Pr[\ic(\mbf L)\ge(1+\delta)k^2/4]\binom{(1+\delta+4\varepsilon)k^2/4}{m}&\ge \mb E[\mbf X\mbm 1_{(1+\delta)k^2/4\le\ic(\mbf L)\le (1+\delta+4\varepsilon)k^2/4}]\\
&\ge(1-o(1))\binom{n^2(n-1)^2k(k-1)/4}{m}\left(\frac{1-o(1)}{n^4}\right)^m
\end{align*}
by linearity of expectation.

The desired result follows, taking $\varepsilon\to 0^+$ slowly. Here we use the assumption $k=\omega(1)$, the approximation $\binom mq=\left((1+o(1))em/q\right)^q$ (which holds for $q=o(m)$ and $q=\omega(1)$) and the approximation $\binom mq = \exp((1+o(1))mH(q/m))$, for $H(t) = -t\log t-(1-t)\log(1-t)$ (which holds for $\min(q,m-q) = \Theta(m)$).
\end{proof}

\section{General configurations and cuboctahedra}\label{sec:cuboctahedra}
In this section we prove \cref{thm:cuboctahedra}. The upper bound and lower bound will be proved by quite different means, but for both we use the 3-uniform hypergraph formulation of a Latin square. A \emph{colored triple system} is a properly 3-colored 3-uniform hypergraph, where the color classes are labeled ``$R$'', ``$C$'' and ``$S$'' (short for ``rows'', ``columns'' and ``symbols'').
A colored triple system is \emph{Latin} if no pair of hyperedges intersect in more than one vertex. An order-$n$ \emph{partial Latin square} is a Latin colored triple system, where the color classes are
\[R=\{1,\dots,n\},\quad C=\{n+1,\dots,2n\}\quad S=\{2n+1,\dots,3n\}.\]
An order-$n$ \emph{Latin square} is a partial Latin square with exactly $n^2$ hyperedges.

\subsection{An upper bound on the number of cuboctahedra}\label{sub:cuboctahedra-upper}
For the upper bound we adapt the ideas used to prove \cref{thm:large-deviations}. In fact, we give a general high-probability upper bound for counts of configurations that satisfy a certain ``stability'' property. We make no attempt to obtain sharp tail estimates, so the proof of this upper bound is basically just a simpler version of the proof of \cref{thm:large-deviations}. We will therefore be very brief with the details.

\begin{definition}\label{def:stable}
Fix a Latin colored triple system $H$, and let $X_H(Q)$ be the number of (labeled) copies of $H$ in a colored triple system $Q$. Let $\mbf B_{n,p}$ be a random colored triple system with color classes $R,C,S$, where each possible hyperedge is present with probability $p$, and let $\mbf{B}_n = \mbf{B}_{n,1/n}$. Note that $\mb E[X_H(\mbf B_{n,p})]=(1+o(1))n^{v(H)}p^{e(H)}$, where $v(H)$ and $e(H)$ are, respectively, the numbers of vertices and hyperedges in $H$.

Say that $H$ is \emph{$\alpha$-stable} if $\mb{E}X_H(\mbf B_n)=O(n^\alpha)$ (i.e., $\alpha \ge v(H)-e(H)$), and there is $t=t(n)=\omega(n(\log n)^2)$ such that $\mb E[X_H(\mbf B_n)\,|\,Q\subseteq\mbf{B}_n]-\mb E[X_H(\mbf B_n)]=o(n^\alpha)$ for any Latin colored triple system $Q$ with at most $t$ triples.
\end{definition}
\begin{theorem}\label{thm:general-upper-tail}
Fix an $\alpha$-stable Latin colored triple system $H\!$, and let $\mbf L$ be a uniformly random order-$n$ Latin square. Then $X_H(\mbf L)\le \mb E X_H(\mbf B_n)+o(n^\alpha)$ whp.
\end{theorem}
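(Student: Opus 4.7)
The plan is to adapt the high-moment method used in the proof of the upper bound in \cref{thm:large-deviations-rectangle}, in a substantially simplified form, since only a probability-$o(1)$ bound is required instead of a sharp tail rate. Write $\mu = \mb E X_H(\mbf B_n) = O(n^\alpha)$; it suffices to show that for every constant $\eta>0$,
\[\Pr[X_H(\mbf L)\ge\mu+\eta n^\alpha]=o(1).\]

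The first step is a conditional-expectation estimate in the spirit of \cref{lem:comparison}: for every completable Latin colored triple system $Q$ with $|Q|\le t/2$ (where $t$ is as in the $\alpha$-stability hypothesis), we would prove
\[\mb E[X_H(\mbf L)\,|\,Q\subseteq\mbf L]\le\mu+o(n^\alpha).\]
Expanding the left-hand side as $\sum_{H'}\Pr[H'\subseteq\mbf L\,|\,Q\subseteq\mbf L]$, where the sum runs over labeled copies $H'$ of $H$, one invokes \cite[Theorem~4.7]{GM90} to obtain $\Pr[H'\subseteq\mbf L\,|\,Q\subseteq\mbf L]=(1+o(1))n^{-|H'\setminus Q|}$ whenever $Q\cup H'$ is a completable Latin partial (and zero otherwise), uniformly in such $H'$. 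Comparing with the corresponding binomial probability $\Pr[H'\subseteq\mbf B_n\,|\,Q\subseteq\mbf B_n]=n^{-|H'\setminus Q|}$ and summing yields $\mb E[X_H(\mbf L)\,|\,Q\subseteq\mbf L]\le(1+o(1))\mb E[X_H(\mbf B_n)\,|\,Q\subseteq\mbf B_n]$, and the $\alpha$-stability hypothesis then provides the required bound.

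The second step is the moment iteration: take $\ell=\ell(n)\to\infty$ sufficiently slowly that $\ell\cdot e(H)\le t/2$ (for instance $\ell=\log\log n$). Expanding $\mb E[X_H(\mbf L)^\ell]$ as a sum over ordered $\ell$-tuples $(H_1,\dots,H_\ell)$ of labeled copies and peeling off the last coordinate gives
\[\mb E[X_H(\mbf L)^\ell]=\sum_{H_1,\dots,H_{\ell-1}}\Pr[H_1\cup\cdots\cup H_{\ell-1}\subseteq\mbf L]\cdot\mb E[X_H(\mbf L)\,|\,H_1\cup\cdots\cup H_{\ell-1}\subseteq\mbf L].\]
Since $|H_1\cup\cdots\cup H_{\ell-1}|\le(\ell-1)e(H)\le t/2$, Step 1 applies, and iterating the peel-off yields $\mb E[X_H(\mbf L)^\ell]\le(\mu+o(n^\alpha))^\ell$. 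Markov's inequality then produces
\[\Pr[X_H(\mbf L)\ge\mu+\eta n^\alpha]\le\left(\frac{\mu+o(n^\alpha)}{\mu+\eta n^\alpha}\right)^\ell,\]
and because $\mu=O(n^\alpha)$, the base is bounded away from $1$ by a constant depending only on $\eta$ (for $n$ large), so letting $\ell\to\infty$ makes the right-hand side $o(1)$.

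The main technical obstacle is in Step 1: one must check that the $(1+o(1))$ error from \cite[Theorem~4.7]{GM90} is uniform over the relevant copies $H'$ and stable under summation, and that degenerate copies whose triples overlap nontrivially with $Q$ (analogous to the case analysis in the proof of \cref{lem:expectation-count}) contribute only $o(n^\alpha)$. The definition of $\alpha$-stability is designed precisely so that these degenerate contributions are absorbed into the hypothesized $o(n^\alpha)$ error, so once the GM-based comparison is made rigorous the rest of the argument reduces to a direct application of the hypothesis.
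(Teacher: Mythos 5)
Your Step~1 is the problem. You want to show
\[\mb E\bigl[X_H(\mbf L)\,\big|\,Q\subseteq\mbf L\bigr]\le \mb EX_H(\mbf B_n)+o(n^\alpha)\]
for a uniformly random \emph{full} order-$n$ Latin square $\mbf L$ and an arbitrary $Q$ of up to $\omega(n(\log n)^2)$ hyperedges, and you justify the pointwise estimate $\Pr[H'\subseteq\mbf L\mid Q\subseteq\mbf L]=(1+o(1))n^{-|H'\setminus Q|}$ by citing \cite[Theorem~4.7]{GM90}. But that result (like the exchange argument in \cref{lem:comparison-simple}, whose hypothesis $2k+|Q_r|\le n/2$ fails when $k=n$) only applies to $k\times n$ Latin \emph{rectangles} with $k=o(n)$; every use of it in this paper is in that setting. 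There is no comparably elementary control on point probabilities in a full random Latin square, and the transference machinery for full Latin squares (Keevash-based, as in \cref{thm:transference-TRP}) gives high-probability statements at cost $\exp(O(n^{2-\gamma}))$, not the per-copy conditional-expectation estimate your peel-off requires. So as written, the chain $\mb E[X_H(\mbf L)^\ell]\le(\mu+o(n^\alpha))^\ell$ is not established.

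This is precisely the obstacle the paper's proof is built to circumvent: it first passes to a random Latin rectangle $\mbf L_{\mr r}$ on $\gamma n$ rows (where \cref{lem:comparison-simple} applies and the binomial comparison is valid), runs the moment iteration there with $\ell\approx t(\gamma n)/e(H)=\omega(n(\log n)^2)$ (not $\log\log n$) to get a super-strong tail bound $\exp(-\omega_\gamma(n(\log n)^2))$, and then transfers back to $\mbf L$ by the permanent estimates of \cite[Proposition~4]{MW99} (which cost $\exp(O(n(\log n)^2))$), a union bound over $\binom{n}{\gamma n}^3$ choices of $(R',C',S')$, and an averaging step. You would need either to recreate this rectangle reduction (at which point your $\ell=\log\log n$ is far too small to survive the change of measure) or to supply an independent conditional-expectation estimate for full Latin squares, which does not follow from the tools in the paper.
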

\begin{proof}
Fix $\gamma>0$ to be chosen later. Let $R'\subseteq R,C'\subseteq C,S'\subseteq S$ be the first $\gamma n$ rows, columns, and symbols of $R,C,S$, respectively. Let $\mbf L_{\mr r}$ be a uniformly random Latin rectangle with rows indexed by $R'$ and columns and symbols indexed by $C,S$, and let $\mbf L_{\mr r}'$ be the Latin colored triple system obtained from $\mbf L_{\mr r}$ by deleting all columns and symbols except those in $C',S'$.

Let $t(\cdot)$ be the function certifying $\alpha$-stability (as in \cref{def:stable}). Consider a Latin colored triple system $Q$ with color classes $R',C',S'$ and at most $t(\gamma n)$ hyperedges.
By \cref{lem:comparison-simple}, for any such $Q$ we have
\[\mb E[X_H(\mbf L_{\mr r}')\,|\,Q\subseteq\mbf L_{\mr r}']\le (1+O(\gamma))\mb E[X_H(\mbf B_{\gamma n,1/n})\,|\,Q\subseteq \mbf B_{\gamma n,1/n}].\]
By $\alpha$-stability of $H$,
\begin{align*}
\mb E[X_H(\mbf B_{\gamma n,1/n})\,|\,Q\subseteq \mbf B_{\gamma n,1/n}]-\mb E X_H(\mbf B_{\gamma n,1/n})&=O_\gamma(\mb E[X_H(\mbf B_{\gamma n})\,|\,Q\subseteq \mbf B_{\gamma n}]-\mb EX_H(\mbf B_{\gamma n}))\\
&=o_\gamma(n^\alpha).
\end{align*}
Therefore
\begin{align*}
\mb E[X_H(\mbf L_{\mr r}')\,|\,Q\subseteq \mbf L_{\mr r}']&\le(1+O(\gamma))\mb{E}X_H(\mbf{B}_{\gamma n,1/n})+o_\gamma(n^\alpha)\\
&\le\mb{E}X_H(\mbf{B}_{\gamma n,1/n}) + O(\gamma)\cdot\gamma^{e(H)}\mb{E}X_H(\mbf{B}_{\gamma n}) + o_\gamma(n^\alpha)\\
&\le\mb{E}X_H(\mbf{B}_{\gamma n,1/n}) + O(\gamma)\cdot\gamma^{e(H)}\cdot O((\gamma n)^\alpha)+o_\gamma(n^\alpha)\\
&\le \mb{E}X_H(\mbf{B}_{\gamma n,1/n}) +O(\gamma^{e(H)+\alpha+1}n^\alpha),
\end{align*}
using the first property of $\alpha$-stability (that $\mb{E}X_H(\mbf B_n)=O(n^\alpha)$). Let $\ell=\lfloor t(\gamma n)/e(H)\rfloor$; a similar calculation to that in the proof of \cref{clm:seed-1} shows that
\[\mb{E}X_H(\mbf L_{\mr r}')^\ell\le (\mb E X_H(\mbf B_{\gamma n,1/n})+ O(\gamma^{e(H)+\alpha+1}n^\alpha))^\ell,\]
where we have noted that $n$ is sufficiently large with respect to $\gamma$. By Markov's inequality and the fact that $\mb{E}X_H(\mbf B_n)=O(n^\alpha)$, for a sufficiently large absolute constant $M$ we have
\begin{align*}
\Pr[X_H(\mbf L_{\mr r}')\ge \mb E X_H(\mbf B_{\gamma n,1/n})+M\gamma^{e(H) + \alpha + 1} n^\alpha]
& \le  \frac{\mb{E} X_H(\mbf L_{\mr r}')^\ell}{\left(\mb E X_H(\mbf B_{\gamma n,1/n})+M\gamma^{e(H) + \alpha + 1} n^\alpha\right)^\ell}\\
& \le \exp(-\omega_{\gamma}(n(\log n)^2)).
\end{align*}
Now, we finish similarly to the deduction of \cref{thm:large-deviations} from \cref{thm:large-deviations-rectangle}. First, using \cite[Proposition~4]{MW99} (i.e., Bregman's inequality and the Egorychev--Falikman inequality for permanents), any event that holds for $\mbf{L}_{\mr r}'$ with probability $1-\exp(-\omega(n(\log n)^2))$ will hold with similar probability for the restriction of $\mbf{L}$ to the rows, columns, and symbols $R',C',S'$. Thus by a union bound and symmetry, we see that whp $\mbf{L}$ has at most $\mb{E}X_H(\mbf B_{\gamma n,1/n})+M\gamma^{e(H) + \alpha + 1} n^\alpha$ copies of $H$ in any choice of $\gamma n$ rows, columns, and symbols. An averaging computation reveals that this property implies $X_H(\mbf L)\le (1+o(1)) \left( \mb E X_H(\mbf B_{n})+M\gamma^{e(H) + \alpha + 1-v(H)} n^\alpha \right)$. Note that $\alpha + 1 + e(H) - v(H)\ge 1$ and thus the desired result follows by taking $\gamma\to 0^+$ slowly.
\end{proof}

We now prove the upper bound in \cref{thm:cuboctahedra} (i.e., that almost every order-$n$ Latin square has at most $(4+o(1))n^4$ cuboctahedra) using \cref{thm:general-upper-tail}.

\begin{proof}[Proof of the upper bound in \cref{thm:cuboctahedra}]
We say that a cuboctahedron is \emph{nondegenerate} if its defining rows $r_1,r_2,r_1',r_2'$ are distinct, defining columns $c_1,c_2,c_1',c_2'$ are distinct, and the four entries of the form $L_{r_i,c_j}$ are distinct. The number of nondegenerate cuboctahedra in a Latin square $L$ is $X_H(L)$, where $H$ is a certain 8-hyperedge, 12-vertex colored triple system depicted on the left hand side of \cref{fig:cuboctahedron}. Note that $\mb E X_H(\mbf B_n)=(1-o(1))n^4$. We claim that $H$ is $4$-stable with $t(n)=n(\log n)^3$. Fix a Latin colored triple system $Q$ with at most $t(n)$ hyperedges, and for a copy of $H$ in $\mbf B_n$, say one of its 8 hyperedges is \emph{forced} if it appears in $Q$.

\begin{figure}
	\begin{centering}
	    $\vcenter{\hbox{
		\includegraphics{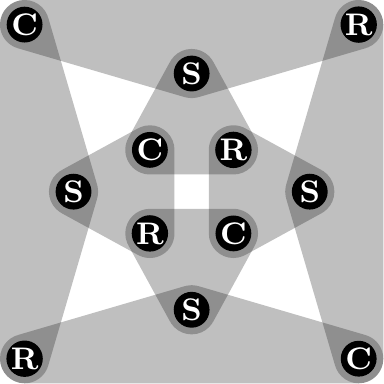}}}$
        $\qquad$
		$\vcenter{\hbox{
			\includegraphics{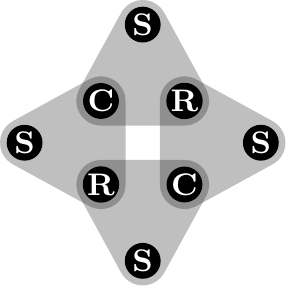}\quad
			\includegraphics{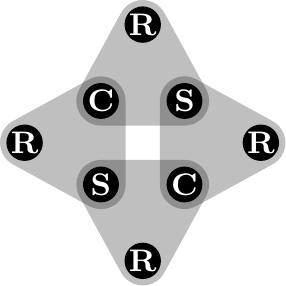}\quad
			\includegraphics{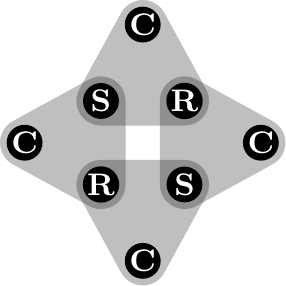}
			}}$
	\par\end{centering}
	\caption{\label{fig:cuboctahedron}On the left, a nondegenerate cuboctahedron (as an $\{R,C,S\}$-colored triple system). On the right, the three dominant degenerate cuboctahedra. In the $n\times n$ array formulation of a Latin square, the first of these degenerate cuboctahedra corresponds to a $2\times 2$ submatrix with distinct entries, taken twice. The second corresponds to a pair of distinct $2\times 1$ submatrices with the same pair of symbols, and the third corresponds to a pair of distinct $1\times 2$ submatrices with the same pair of symbols.}
\end{figure}

\begin{itemize}
	\item The contribution to $\mb E[X_H(\mbf B_n)\,|\,Q\subseteq \mbf B_n]$ from copies of $H$ with 1 forced hyperedge is $O(t n^9 n^{-7})=o(n^4)$, because there are $t$ ways to choose the forced entry, and $O(n^9)$ ways to choose the other 9 vertices to specify a copy of $H$. Then, the probability that all $7$ non-forced hyperedges are present is $n^{-7}$.
	\item The contribution from copies with 2 forced hyperedges is $O(t^2 n^7 n^{-6})=o(n^4)$ for similar reasons. Here we use the fact that every set of $2$ hyperedges in a cuboctahedron spans at least $5$ vertices.
	\item The contribution from copies with 3 forced hyperedges is $O(t^3 n^{12-7} n^{-5})=o(n^4)$, noting that every set of 3 hyperedges in a cuboctahedron spans at least $7$ vertices.
	\item For the contribution from copies with 4 forced hyperedges:
	\begin{itemize}
		\item If these 4 forced hyperedges are arranged in a ``4-cycle'', spanning 8 vertices, then the four forced hyperedges are determined by any three of them by the Latin property of $Q$, so the contribution is $O(t^3 n^{12-8} n^{-4})=o(n^4)$.
		\item Otherwise, the four forced hyperedges span at least 9 vertices, and the contribution is $O(t^4 n^{12-9} n^{-4})=o(n^4)$.
	\end{itemize}
	\item For the contribution from copies with 5 forced hyperedges we again distinguish cases: if the forced hyperedges contain a 4-cycle, then one can check that they are determined by some size-3 subset and the contribution is $O(t^3 n^{12-10} n^{-3})=o(n^4)$. Otherwise, at least 11 vertices are covered by $Q$ so the contribution is $O(t^5 n^{12-11} n^{-3})=o(n^4)$.
	\item For the contribution from copies with 6 forced hyperedges, there are three ``non-isomorphic'' cases to consider: the non-forced hyperedges can share a vertex, be at distance 1, or be on ``opposite sides'' of the cuboctahedron. In all cases, one can check that the forced hyperedges can be determined by some size-3 subset, and therefore compute that the contribution is $O(t^3 n^{12-11} n^{-2})=o(n^4)$.
	\item The contribution from copies with 7 forced hyperedges is $O(t^4 n^{-1})=o(n^4)$, as the forced hyperedges are determined by some size-3 subset.
	\item The contribution from copies with 8 forced hyperedges is $O(t^3)=o(n^4)$, as a cuboctahedron is determined by some 3 of its hyperedges.
\end{itemize}
	
The difference between $\mb E[X_H(\mbf B_n)\,|\,Q\subseteq \mbf B_n]$ and $\mb EX_H(\mbf B_n)$ arises from cuboctahedra with at least one forced hyperedge, so the above calculations show that $H$ is $4$-stable, as claimed. It follows that $X_H(\mbf L)\le (1+o(1))n^4$ whp, by \cref{thm:general-upper-tail}.

The total number of cuboctahedra (including degenerate ones) in $\mbf L$ can be expressed as a sum of quantities of the form $X_{H'}(\mbf L)$, where $H'$ ranges over a variety of Latin colored triple systems obtained by identifying vertices of $H$ in certain ways (respecting the Latin property).

For any such $H'$ with eight hyperedges (i.e., some vertices are identified but no hyperedges coincide), we have $\mb{E}X_{H'}(\mbf{B}_n) = O(n^{11})\cdot n^{-8} = O(n^3)$ and the exact same case structure as above shows that $H'$ is $4$-stable (whether it is $3$-stable is non-obvious but unnecessary for us). \cref{thm:general-upper-tail} then shows that $X_{H'}(\mbf{L})\le\mb{E}X_{H'}(\mbf{B}_n) + o(n^4) = o(n^4)$ whp.

Now we consider $H'$ with fewer than eight hyperedges. We will show that these $H'$ deterministically contribute $(3+o(1))n^4$ degenerate cuboctahedra. First, the dominant contribution comes from the three Latin colored triple systems depicted on the right of \cref{fig:cuboctahedron}. As discussed in \cref{sub:cuboctahedra}, the contribution from these diagrams is $(1+o(1))n^4$ each, for a total of $(3+o(1))n^4$ (with probability 1). Starting from these ``dominant'' cases, there are four further $H'$ that can be obtained by further identifying vertices (three with two hyperedges, and one with a single hyperedge). Each of these contribute only $O(n^3)$ to our count (again, with probability 1). As it turns out, one always obtains such a situation if any pair of faces is collapsed, other than ``opposite faces''.

It remains to consider degenerate cuboctahedra obtained by collapsing such ``opposite faces''. In the $n\times n$ array formulation of a Latin square, this corresponds to those cuboctahedra which consist of a pair of $2\times 2$ submatrices with the same arrangement of symbols, intersecting in exactly one entry (this is only possible if the arrangement has two of the same symbol). It is easy to see that in all such configurations there are three vertices which, if known, determine the entire configuration, so the contribution from such configurations is $O(n^3)$ with probability 1.

The desired result follows by adding up all the contributions from degenerate $H'$ (together with nondegenerate $H$).
\end{proof}

\subsection{The lower tail for cuboctahedra}\label{sub:cuboctahedra-lower}

We now turn to the lower bound in \cref{thm:cuboctahedra}. Recall the definition of a nondegenerate cuboctahedron from the proof of the upper bound in \cref{thm:cuboctahedra}. The contribution from degenerate cuboctahedra is always at least $(3-o(1))n^4$, so it suffices to prove the following strong lower tail bound for nondegenerate cuboctahedra.
\begin{theorem}\label{thm:strong-lower-cuboctahedra}
Fix $\delta > 0$, and let $X_H(\mbf{L})$ be the number of nondegenerate cuboctahedra in a uniformly random order-$n$ Latin square $\mbf L$. Then
\[\Pr[X_H(\mbf{L})\le (1-\delta)n^4]\le\exp(-\Omega_\delta(n^2)).\]
\end{theorem}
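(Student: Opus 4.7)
The plan combines the triangle-removal-process (TRP) transference framework from \cite{Kwa20,FK20,KSS21} with a second, direct concentration argument, since the TRP-based machinery alone only delivers tail bounds of the form $1-\exp(-\Omega(n^{2-b}))$, which falls short of what is needed here.

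First, I would approximate a uniformly random order-$n$ Latin square by the random partial Latin square $\mbf{L}_{\mr{TRP}}$ produced by the random-greedy triangle-removal process run for $(1-\eps)n^2$ steps. Bohman--Warnke-style pseudorandomness tracking shows that each fixed nondegenerate cuboctahedron ($8$ hyperedges, $12$ vertices) appears in $\mbf{L}_{\mr{TRP}}$ with probability $(1+o(1))n^{-4}$, so linearity of expectation and a standard moment computation give $\mb{E}\,X_H(\mbf{L}_{\mr{TRP}}) = (1+o(1))n^4$ together with the concentration statement $\Pr[X_H(\mbf{L}_{\mr{TRP}}) \le (1-\delta/2)n^4] \le \exp(-\Omega(n^{2-b}))$ for some small $b=b(\delta)>0$. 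Keevash's completion / counting theorems \cite{Kee18,Kee18c} then transfer this to the uniform measure on Latin squares, yielding $\Pr[X_H(\mbf{L}) \le (1-\delta/2)n^4] \le \exp(-\Omega(n^{2-b}))$.

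Second, and this is the main obstacle, I would boost the $\exp(-\Omega(n^{2-b}))$ bound to the desired $\exp(-\Omega(n^2))$. This is the step where the quantitative limitations of Keevash's completion theorem force us out of the TRP framework. The approach I would take is to adapt the ``non-standard'' switching-enumeration argument used in \cite[Theorem~1.2(a)]{KSS21} for the intercalate lower tail: for each Latin square $L$ with $X_H(L)\le(1-\delta)n^4$, exhibit a large collection of bounded-complexity trades (e.g.\ intercalate swaps and small cycle swaps) whose application strictly increases $X_H$, and double-count the resulting bipartite ``switching graph'' between deficient and typical Latin squares to bound the ratio of the two classes by $\exp(-\Omega(n^2))$. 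An alternative route is to apply the TRP analysis to $\Omega(1)$-many carefully chosen overlapping sub-rectangles of $\mbf{L}$, then to aggregate the per-rectangle near-optimal counts via an Efron--Stein / entropy-method argument that exploits the near-independence of well-separated portions of a random Latin square.

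The principal difficulty is the boosting step. The expectation and weak-tail computations via the TRP are fairly routine by the standards of \cite{Kwa20,FK20,KSS21}, whereas extracting a genuinely exponential-in-$n^2$ tail bound requires new input. The combinatorial bookkeeping is also more delicate than in the intercalate case, because cuboctahedra are $8$-hyperedge, $12$-vertex configurations rather than $4$-hyperedge, $8$-vertex ones, so each local modification affects many more cuboctahedra simultaneously and the stability/degenerate-collapse analysis already present in the upper bound (cf.\ \cref{fig:cuboctahedron} and the casework in \cref{sub:cuboctahedra-upper}) must be revisited to ensure that each swap reliably produces a measurable increase in $X_H$. Once this is done, combining the TRP-based expectation estimate with the switching-based concentration boost yields the desired bound $\Pr[X_H(\mbf{L})\le(1-\delta)n^4] \le \exp(-\Omega_\delta(n^2))$.
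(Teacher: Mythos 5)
Your proposal identifies the right general framework (TRP transference plus an additional concentration ingredient), but misplaces where the concentration is applied and leans on a switching argument that is not present in the literature for this problem.

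The paper's transference theorem, \cref{thm:transference-TRP}, gives a \emph{multiplicative} loss of $\exp(O(n^{2-\gamma}))$: $\Pr[\mbf{L}\in\mc{T}]\le\exp(O(n^{2-\gamma}))\Pr[\mbf{P}\in\mc{T}_m]$. So if you could show $\Pr[\mbf{P}\in\mc{T}_m]\le\exp(-\Omega(n^2))$, the $\exp(O(n^{2-\gamma}))$ factor is simply absorbed and no further ``boosting'' on the Latin-square side is needed. The key technical step in the paper is exactly this: via \cref{lem:TRP-independent} one passes to the independent model $\mbf{G}^\ast$ (built from $\mbf{B}_{n,\alpha/n}$), and there one wants an $\exp(-\Omega(n^2))$ lower tail for the nondegenerate cuboctahedron count. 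The obstruction is Lipschitzness: $X_H(\mbf{G}^\ast)$ is not $O(n^2)$-Lipschitz in the independent Bernoulli variables because a single hyperedge can sit in an unbounded number of cuboctahedra. The paper fixes this with a truncation: defining $\mbf{Q}'$ as the maximum size of a family of nondegenerate cuboctahedra in which no hyperedge lies in more than $2\alpha^7n^2$ members, one shows $\mb{E}\mbf{Q}'\ge\alpha^8(1-\delta/4)n^4$ by a moment calculation, and $\mbf{Q}'$ is $O(n^2)$-Lipschitz by construction, so Freedman's inequality (\cref{thm:freedman-concentration}) gives $\exp(-\Omega_{\alpha,\delta}(n^2))$. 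Your proposal does not contain this truncation-plus-Lipschitz-concentration idea, which is the crux; you claim only an $\exp(-\Omega(n^{2-b}))$ bound from the TRP analysis (and even that would in fact be overwhelmed by the $\exp(O(n^{2-\gamma}))$ transference loss unless $b<\gamma$, which you do not control).

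The ``boosting step'' you then propose is the genuine gap. You suggest adapting what you describe as a ``switching-enumeration argument'' from \cite[Theorem~1.2(a)]{KSS21}, but that result is proved by essentially the same TRP-plus-concentration route as the present theorem (as the paper's closing remark in \cref{sub:cuboctahedra-lower} makes clear), not by switching. A switching argument for cuboctahedra is not a routine adaptation: a cuboctahedron has $8$ hyperedges on $12$ vertices, so a single local trade can create or destroy up to $\Theta(n^2)$ of them, and you would need to control the full switching graph between deficient and typical Latin squares with no available concentration tool — this is precisely the difficulty the truncation step is designed to avoid. Your alternative (Efron--Stein on overlapping sub-rectangles) at least lives in the right universe of concentration-based arguments, but as stated it is a sketch of an unproven program, not a proof step. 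You should instead apply the concentration inequality in the independent model $\mbf{G}^\ast$ to a Lipschitz surrogate of the count, as the paper does.
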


To prove \cref{thm:strong-lower-cuboctahedra}, we use some machinery from \cite{KSS21} (building on ideas in \cite{Kwa20,FK20}), which allows one to approximate a random Latin square with the so-called \emph{triangle-removal process}. We simply quote a number of statements which will be used in the proof; a more thorough discussion of the history of these techniques can be found in \cite{KSS21}. Let $\mc{L}_{m}$ be the set of order-$n$ partial Latin squares with $m$ hyperedges and let $\mc{L}$ be the set of order-$n$ Latin squares.

\begin{definition}[{cf.~\cite[Definition~2.2]{KSS21}}]\label{def:TRP}
The 3-partite \emph{triangle removal process} is defined as follows. Start with the complete 3-partite graph $K_{n,n,n}$ on the vertex set $R\cup C\cup S$.
At each step, consider the set of all triangles in the current graph,
select one uniformly at random, and remove it. After $m$
steps of this process, the set of removed triangles can be interpreted
as a partial Latin square $L\in\mc{L}_{m}$ unless we run out
of triangles before the $m$th step. Let $\mb{L}(n,m)$
be the distribution on $\mc{L}_{m}\cup\{\ast\} $ obtained
from $m$ steps of the triangle removal process, where ``$\ast$''
corresponds to the event that we run out of triangles.
\end{definition}
\begin{definition}[{\cite[Definition~2.3]{KSS21}}]\label{def:inheritance}
Let $\mc{T}_{m}\subseteq\mc{L}_{m}$ and $\mc{T}\subseteq\mc{L}$. We say that $\mc{T}_{m}$ is \emph{$\rho$-inherited} from $\mc{T}$ if for any $L\in\mc{T}$, taking $\mbf{L}_{m}\subseteq L$ as a uniformly random subset of $m$ hyperedges of $L$, we have $\mbf L_{m}\in\mc{T}_{m}$ with probability at least $\rho$.
\end{definition}

We will need the following transference theorem for inherited properties, which compares a subset of a uniformly random Latin square to the outcome of the triangle removal process. This theorem builds on a similar theorem for Steiner triple systems proved by Kwan~\cite{Kwa20}, using the work of Keevash~\cite{Kee18,Kee18c}, and the tripartite case is similar; see \cite{KSS21b}.
\begin{theorem}[{\cite[Theorem~2.4]{KSS21}}]\label{thm:transference-TRP}
Let $\alpha\in(0,1/2)$. There is an absolute constant $\gamma>0$ such that the following
holds. Consider $\mc{T}_{m}\subseteq\mc{L}_{m}$ with $m = \alpha n^2$ and $\mc{T}\subseteq\mc{L}$
such that $\mc{T}_{m}$ is $1/2$-inherited from $\mc{T}$.
Let $\mbf{P}\sim\mb{L}(n,m)$ be a partial
Latin square obtained by $m$ steps of the triangle removal process,
and let $\mbf{L}\in\mc{L}$ be a uniformly random order-$n$ Latin
square. Then
\[\Pr[\mbf{L}\in\mc{T}]\le\exp(O(n^{2-\gamma}))\Pr[\mbf{P}\in\mc{T}_m].\]
\end{theorem}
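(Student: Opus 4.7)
The plan is to prove the transference bound by comparing, on $\mc{L}_m$, the distribution $\mu_{\mathrm{sub}}$ of a uniformly random $m$-subset of triples of a uniformly random Latin square to the distribution $\mu_{\mathrm{TRP}}$ of the triangle-removal process after $m$ steps, and to show pointwise that $\mu_{\mathrm{sub}}(P) \le \exp(O(n^{2-\gamma})) \mu_{\mathrm{TRP}}(P)$ on a class of ``typical'' partial Latin squares carrying almost all the mass of both distributions.

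First I would use the $1/2$-inheritance hypothesis to deduce that if $\mbf{L}' \subseteq \mbf{L}$ is a uniformly random $m$-subset of triples of a uniformly random $\mbf{L} \in \mc{L}$, then $\mu_{\mathrm{sub}}(\mc{T}_m) = \Pr[\mbf{L}' \in \mc{T}_m] \ge \tfrac{1}{2}\Pr[\mbf{L} \in \mc{T}]$. So it suffices to show $\mu_{\mathrm{sub}}(\mc{T}_m) \le \exp(O(n^{2-\gamma}))\,\mu_{\mathrm{TRP}}(\mc{T}_m)$. Next I would define a class $\mc{G} \subseteq \mc{L}_m$ of \emph{quasirandom} partial Latin squares (controlled degree/codegree regularity in the $R{-}C{-}S$ tripartite sense, near-expected triangle counts in the complement $K_{n,n,n}\setminus P$, and similar local pseudorandomness at the right scales). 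One then has to check two self-correction statements: (i) with probability $1-\exp(-\Omega(n^{2-\gamma'}))$, the output of $m$ steps of the triangle-removal process lies in $\mc{G}$, which follows from the dynamic-concentration/self-correction analysis of the triangle-removal process (Bohman--Frieze--Lubetzky style); and (ii) with probability $1-\exp(-\Omega(n^{2-\gamma'}))$, $\mbf{L}'$ lies in $\mc{G}$, which follows from the tripartite analog of Keevash's existence theorem combined with a symmetry/exchangeability argument (the random $m$-subset of a uniformly random Latin square is exchangeable on triples).

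The technical heart is a pointwise estimate on $\mc{G}$. For any $P \in \mc{L}_m$ we have exactly $\mu_{\mathrm{sub}}(P) = N(P) / (\binom{n^2}{m}\,|\mc{L}|)$, where $N(P)$ is the number of order-$n$ Latin squares containing $P$. For $P \in \mc{G}$, the tripartite version of Keevash's counting theorem (from the work of Kwan--Sah--Sawhney alluded to in \cite{KSS21b}) gives $N(P) = \exp(\pm O(n^{2-\gamma}))\cdot\Lambda(P)$ for an explicit ``entropy'' factor $\Lambda(P)$ depending only on the quasirandomness statistics of $P$. On the other hand, expanding $\mu_{\mathrm{TRP}}(P)$ as a sum over orderings $\pi$ of the $m$ triples of $P$, one has $\mu_{\mathrm{TRP}}(P) = \sum_\pi \prod_{i=1}^{m} 1/T_i(\pi)$, where $T_i(\pi)$ is the number of triangles in $K_{n,n,n}$ minus the first $i-1$ triples of $\pi$. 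Self-correction of the triangle-removal trajectory gives $T_i(\pi) = (1\pm n^{-\gamma})\mb E T_i$ uniformly in $i$ for most $\pi$, and a telescoping log-integration argument turns $\sum_\pi \prod_i 1/T_i(\pi)$ into the same expression $\Lambda(P)$ up to $\exp(O(n^{2-\gamma}))$. Dividing the two formulas yields the desired pointwise ratio bound on $\mc{G}$.

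The main obstacle is step three: matching the Keevash-style count $N(P)$ to the telescoped triangle-removal integral with an error of only $\exp(O(n^{2-\gamma}))$, rather than the softer $\exp(o(n^2))$ that suffices for most applications. This requires a quantitative version of Keevash's theorem in the tripartite setting with power-saving error in the exponent, and matching quantitative self-correction estimates for the triangle-removal process; both ingredients are available (they are the tripartite analogs of what Kwan used in \cite{Kwa20} for Steiner triple systems). Having established the pointwise comparison on $\mc{G}$, one finishes by
\[
\mu_{\mathrm{sub}}(\mc T_m) \le \mu_{\mathrm{sub}}(\mc T_m \cap \mc G) + \mu_{\mathrm{sub}}(\mc L_m\setminus \mc G) \le \exp(O(n^{2-\gamma}))\,\mu_{\mathrm{TRP}}(\mc T_m) + \exp(-\Omega(n^{2-\gamma'})),
\]
and the atypical term is absorbed into the $\exp(O(n^{2-\gamma}))$ factor (adjusting $\gamma$), which combined with the inheritance reduction yields the stated bound.
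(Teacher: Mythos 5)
First, a point of context: this paper does not prove \cref{thm:transference-TRP} at all — it is quoted from \cite[Theorem~2.4]{KSS21}, whose proof follows Kwan's comparison argument between a random subset of a uniform (Latin square / Steiner triple system) and the triangle removal process, using Keevash-type counting with power-saving error. Your outline reconstructs that general strategy correctly in its middle part (inheritance reduction, pointwise comparison of $\mu_{\mathrm{sub}}$ and $\mu_{\mathrm{TRP}}$ on a quasirandom class via counting completions and telescoping the removal-process products). However, there is a genuine gap at the final step. The claim that ``the atypical term is absorbed into the $\exp(O(n^{2-\gamma}))$ factor'' is not valid: your argument only yields $\mu_{\mathrm{sub}}(\mc T_m)\le \exp(O(n^{2-\gamma}))\,\mu_{\mathrm{TRP}}(\mc T_m)+\exp(-\Omega(n^{2-\gamma'}))$, and the additive term can be absorbed only if $\mu_{\mathrm{TRP}}(\mc T_m)$ is at least comparable to $\exp(-O(n^{2-\gamma'}))$. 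No such lower bound holds in general: $\mc T_m$ is an arbitrary inherited family, and its probability under the triangle removal process can be as small as $\exp(-\Theta(n^2\log n))$ (already the probability of a single partial Latin square). This is not a cosmetic issue — in this very paper the theorem is applied, in the proof of \cref{thm:strong-lower-cuboctahedra}, to an event with $\Pr[\mbf P\in\mc T_m^\delta]=\exp(-\Theta(n^2))$, where your weaker conclusion would only give a bound of $\exp(-\Omega(n^{2-\gamma'}))$ instead of $\exp(-\Omega(n^2))$. A related flaw: your typicality statement (ii) concerns the unconditioned random Latin square, whereas what the argument needs is typicality of the random $m$-subset given $\mbf L\in\mc T$; conditioning on an event of probability $\exp(-\Theta(n^2))$ destroys a $1-\exp(-\Omega(n^{2-\gamma'}))$ bound.

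The actual proof avoids this by placing the quasirandomness restriction \emph{inside} the inheritance step rather than as an additive error between the two measures: for \emph{every fixed} Latin square $L$ (in particular every $L\in\mc T$), a uniformly random $m$-subset of its triples is quasirandom with probability $1-o(1)$ (a concentration statement for each fixed $L$, not about the uniform Latin square), so the intersection of $\mc T_m$ with the quasirandom class is still, say, $1/4$-inherited from $\mc T$. One then sums the pointwise comparison — which, as in your sketch, requires power-saving counting of completions of quasirandom partial Latin squares and control of the telescoped products of available-triangle counts along most orderings — only over quasirandom members of $\mc T_m$, obtaining a purely multiplicative bound $\Pr[\mbf L\in\mc T]\le \exp(O(n^{2-\gamma}))\Pr[\mbf P\in\mc T_m]$ with no additive loss. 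Without this restructuring, your proposal proves a strictly weaker statement than the one asserted and used here.
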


The purpose of defining inherited properties is just that we can compare our property on a whole Latin square to a property on an initial segment of the triangle removal process (a direct comparison does not make sense since the triangle removal process is unlikely to complete a full Latin square).

Next, instead of analyzing the triangle removal process directly it is more convenient to compare an initial fraction of it to an independent model. Recall the definition of $\mbf B_{n,p}$ from \cref{def:stable}.

\begin{lemma}[{\cite[Lemma~5.2]{KSS21}}]\label{lem:TRP-independent}
Let $\mc{T}$ be a property of unordered partial Latin squares that is monotone decreasing, i.e., if $P\in\mc{T}$ and $P'\subseteq P$ then $P'\in\mc{T}$. Fix $\alpha\in(0,1)$, let $\mbf{P}\sim\mb{L}(n,\alpha n^2)$, and let $\mbf{G}^\ast$ be the partial Latin square obtained from $\mbf{B}_{n,\alpha/n}$ by simultaneously deleting every hyperedge which intersects another hyperedge in more than one vertex. Then 
\[\Pr[\mbf{P}\in\mc{T}]\le O(\Pr[\mbf{G}^{\ast}\in\mc{T}]).\]
\end{lemma}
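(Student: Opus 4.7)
The plan is to prove the lemma via a coupling argument that realizes $\mbf{P}$ and $\mbf{G}^\ast$ on a shared probability space. For each of the $n^3$ potential triples $t$, draw an independent label $U_t\sim\mr{Unif}(0,1)$. Define the \emph{priority TRP} by processing the triples in increasing order of $U_t$, accepting each one iff it does not conflict with any previously accepted triple, and stopping once $\alpha n^2$ are accepted. Standard arguments (using exchangeability of the unprocessed labels at each step) show that the priority TRP has the same distribution as $\mbf{P}\sim\mb{L}(n,\alpha n^2)$. Using the same labels, set $\mbf{B}_{n,\alpha/n}=\{t:U_t\le \alpha/n\}$ and derive $\mbf{G}^\ast$ as in the lemma statement. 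Let $T^\ast$ denote the label of the $(\alpha n^2)$-th accepted triple in the priority TRP.

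The key step is to observe a containment on the event $\mc{E}=\{T^\ast\ge \alpha/n\}$: namely, $\mbf{G}^\ast\subseteq\mbf{P}$ on $\mc{E}$. Indeed, if $t\in\mbf{G}^\ast$ then $U_t\le \alpha/n\le T^\ast$, so $t$ is processed by the priority TRP before it stops. Every triple $t'$ processed strictly before $t$ satisfies $U_{t'}<U_t\le \alpha/n$, hence $t'\in\mbf{B}_{n,\alpha/n}$; by the defining property of $\mbf{G}^\ast$, no such $t'$ can conflict with $t$. Therefore $t$ is accepted and $t\in\mbf{P}$. Since $\mc{T}$ is monotone decreasing, on $\mc{E}$ the event $\{\mbf{P}\in\mc{T}\}$ forces $\{\mbf{G}^\ast\in\mc{T}\}$, giving
\[\Pr[\mc{E},\mbf{P}\in\mc{T}]\le\Pr[\mbf{G}^\ast\in\mc{T}].\]

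To conclude, it suffices to prove that $\Pr[\mc{E}\mid\mbf{P}]\ge c$ almost surely for some constant $c>0$ depending only on $\alpha$. Given this, $\Pr[\mbf{P}\in\mc{T}]\le (1/c)\Pr[\mbf{P}\in\mc{T},\mc{E}]\le (1/c)\Pr[\mbf{G}^\ast\in\mc{T}]$ as required. Heuristically, $\mc{E}$ says the priority TRP needs label-time at least $\alpha/n$ to accumulate $\alpha n^2$ acceptances, but by label-time $\alpha/n$ only about $\alpha n^2$ triples have even been processed, of which a $(1-O(\alpha))$-fraction are accepted (the remainder rejected due to conflicts with earlier acceptances). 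So intuitively $\mc{E}$ has probability close to $1$.

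The main obstacle is making this conditional on the outcome of $\mbf{P}$. I would handle it by recasting the priority TRP as a two-stage sampling: first generate the ordered acceptance sequence $(t_1,\ldots,t_{\alpha n^2})$, and then interleave the rejected ``dead'' triples (those conflicting with some previously accepted $t_i$) in a uniform exchangeable fashion among the remaining slots. Conditional on $(t_1,\ldots,t_{\alpha n^2})$, the total number of rejections $R$ before the $(\alpha n^2)$-th acceptance is a sum of (essentially) independent terms counting how many new ``dead'' triples enter the pool after each acceptance, and a Chernoff-type estimate gives $R=\Theta(\alpha^2 n^2)$ with probability at least $1-o(1)$ uniformly over the sequence. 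The label $T^\ast$ is then the $(\alpha n^2+R)$-th uniform order statistic among $n^3$ labels, which is at least $\alpha/n$ with constant probability when $R\ge c'\alpha^2 n^2$. The delicate point is verifying that the interleaving description really is uniform exchangeable conditional on the acceptance sequence; this uses the memorylessness of the underlying uniform labels together with the fact that rejections depend only on conflicts with already-accepted triples.
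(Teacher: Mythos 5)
The paper does not actually prove \cref{lem:TRP-independent} — it is quoted from \cite[Lemma~5.2]{KSS21} — so there is no in-paper proof to compare against; I am assessing your argument on its own terms. Your coupling skeleton is sound: with i.i.d.\ uniform labels, the label-order greedy process does have the law of $\mb L(n,\alpha n^2)$ (the exchangeability argument you invoke is the standard and correct one), the containment $\mbf G^\ast\subseteq\mbf P$ on $\mc E=\{T^\ast\ge\alpha/n\}$ is verified correctly, and reducing to a \emph{uniform} bound $\Pr[\mc E\mid\mbf P]\ge c$ is exactly what is needed, since $\Pr[\mbf P\in\mc T]$ is typically exponentially small and a union bound against $\Pr[\mc E^c]$ would be useless. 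The gap is in how you propose to establish that uniform bound. Your two-stage description is not accurate as stated: conditionally on the acceptance sequence $(t_1,\dots,t_m)$, the rejected triples are \emph{not} interleaved exchangeably among the remaining slots — a triple whose only conflict is with $t_i$ can occupy only slots after the $i$-th acceptance — so the conditional law of the rejection count $R$ is a constrained interleaving, and the asserted Chernoff-type estimate ``$R=\Theta(\alpha^2n^2)$ uniformly over all acceptance sequences'' is neither justified nor obviously true for adversarial sequences. You flag this yourself as the delicate point, and as written it is a genuine hole.

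Fortunately, none of that analysis is needed: your own coupling closes in one line. The acceptance sequence, and hence $\mbf P$, is a function of the \emph{ranking} of the labels alone, while the multiset of label \emph{values} is independent of the ranking; in particular the event $\mc F=\{|\mbf B_{n,\alpha/n}|\le \alpha n^2-1\}$ is independent of $\mbf P$. Moreover $\mc F\subseteq\mc E$ on any non-stalled outcome: if $T^\ast<\alpha/n$ then all of $t_1,\dots,t_{\alpha n^2}$ have labels below $\alpha/n$, forcing $|\mbf B_{n,\alpha/n}|\ge\alpha n^2$. Since $|\mbf B_{n,\alpha/n}|$ is binomial with mean $\alpha n^2$, we get $\Pr[\mc E\mid\mbf P]\ge\Pr[\mc F]\ge 1/2-o(1)$ uniformly, and your argument finishes with implicit constant about $2$ (no control of $R$ beyond $R\ge 0$ is required). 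One residual technicality worth a sentence: $\mb L(n,\alpha n^2)$ may output ``$\ast$'' if the greedy stalls before $\alpha n^2$ acceptances; since $\ast\notin\mc T$, restricting to the event $\mbf P\in\mc T$ already excludes this, so the containment argument is unaffected.
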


To apply this machinery we must first show that the property of having few nondegenerate cuboctahedra satisfies the inheritance property defined in \cref{def:inheritance}.
\begin{lemma}\label{lem:octahedra-inheritance}
Fix $\alpha,\delta\in(0,1)$. Let $\mc{T}^\delta\subseteq\mc{L}$ be the property that a Latin square $L\in\mc{L}$ has at most $(1-\delta)n^4$ nondegenerate cuboctahedra, let $m=\alpha n^2$, and let $\mc{T}_m^\delta\subseteq\mc{L}_m$ be the property that a partial Latin square $P\in\mc{L}_m$ has at most $\alpha^8(1-\delta/2)n^4$ nondegenerate cuboctahedra. Then $\mc{T}_m^\delta$ is $(1/2)$-inherited from $\mc{T}^\delta$.
\end{lemma}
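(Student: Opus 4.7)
The plan is to apply Chebyshev's inequality to $X := X_H(\mbf L_m)$, the number of nondegenerate cuboctahedra in a uniformly random $m$-hyperedge subset $\mbf L_m$ of $L \in \mc T^\delta$. Writing $N := X_H(L) \le (1-\delta)n^4$, I would show $\mb E[X] \le \alpha^8(1-3\delta/4)n^4$ and $\mr{Var}(X) = o(n^8)$; Chebyshev then yields $\Pr[X > \alpha^8(1-\delta/2)n^4] = o(1)$, which is $\le 1/2$ for large $n$.

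The expectation bound is immediate from hypergeometric sampling: since a nondegenerate cuboctahedron has $8$ distinct hyperedges, each individual cuboctahedron survives in $\mbf L_m$ with probability $\binom{n^2-8}{m-8}/\binom{n^2}{m} = \alpha^8(1+O(1/n^2))$, so $\mb E[X] = N \alpha^8(1+o(1)) \le \alpha^8(1-3\delta/4)n^4$ for large $n$.

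For the variance, I would decompose the sum $\mr{Var}(X)=\sum_{C_1,C_2}\mr{Cov}(\mbm 1_{C_1\subseteq\mbf L_m},\mbm 1_{C_2\subseteq\mbf L_m})$ according to the number $k$ of hyperedges shared by $C_1$ and $C_2$. For $k=0$, a short hypergeometric calculation gives covariance $O(\alpha^{16}/n^2)$ per pair (the negative-dependence correction to the product of marginals), and with at most $N^2 = O(n^8)$ disjoint pairs the total contribution is $O(n^6)$. For $k\ge 1$ the covariance is $O(1)$, so it suffices to bound the number of such pairs. To that end, I would bound the number $d_e$ of nondegenerate cuboctahedra through a fixed hyperedge $e = (r_0,c_0,s_0)$ by $O(n^3)$: up to a factor of $8$ for the labeled position of the cuboctahedron occupied by $e$, one chooses the opposite corner $(r_2,c_2)$ of the $2\times 2$ submatrix containing $e$ (giving $O(n^2)$ options), which determines the four symbols $s_{ij}=L_{r_i,c_j}$; then the matching $2\times 2$ submatrix $(r_1',r_2',c_1',c_2')$ admits at most $n$ completions, since specifying $r_1'$ forces $c_1',c_2'$ (the columns in row $r_1'$ containing $s_{11},s_{12}$) and then $r_2'$. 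Combined with $\sum_e d_e = 8N = O(n^4)$, this gives $\sum_e d_e^2 \le (\max_e d_e)(\sum_e d_e) = O(n^7)$, which bounds the number of ordered pairs of distinct cuboctahedra sharing at least one hyperedge. Altogether $\mr{Var}(X) = O(n^7)+O(n^6) = o(n^8)$.

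I expect the main obstacle to be the bound $d_e = O(n^3)$: this is where the Latin structure of $L$ is essential, since without it (e.g.\ in an arbitrary $\{R,C,S\}$-colored $3$-uniform hypergraph) the analogous count could be as large as $\Theta(n^7)$. Once this bound is in hand, assembling the variance estimate and invoking Chebyshev is routine, and the result follows after choosing $n$ sufficiently large (in terms of $\alpha$ and $\delta$).
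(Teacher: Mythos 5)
Your proposal follows essentially the same route as the paper: Chebyshev's inequality applied to the number of surviving nondegenerate cuboctahedra under hypergeometric edge-sampling, with a variance bound split into disjoint pairs (small negative covariance) and intersecting pairs (controlled via the fact that each hyperedge of $L$ lies in $O(n^3)$ nondegenerate cuboctahedra). The paper phrases the intersection bound as ``every cuboctahedron shares a hyperedge with at most $8n^3$ others'', which is just the same Latin-square-forcing count as your $d_e=O(n^3)$ in a different guise, and otherwise the estimates ($O(n^7)$ variance, $\Omega(n^8)$ gap, hence probability $1-o(1)$) coincide.
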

\begin{proof}
Fix $L\in\mc{T}^\delta$. Let $\mbf{L}_m$ be obtained by taking $m$ uniformly random hyperedges of $L$, and let $\mc{Q}$ be the set of nondegenerate cuboctahedra in $L$. For $Q\in\mc{Q}$, let $\mbf{1}_Q$ be the indicator variable for the event that $Q\subseteq\mbf{L}_m$, and let $\mbf{X}=\sum_{Q\in\mc{Q}}\mbf{1}_Q$ be the number of nondegenerate cuboctahedra in $\mbf{L}_m$.
For all $Q\in\mc{Q}$ we have $\mb{E}\mbf{1}_Q=\alpha^8+O(1/n)$, so $\mb{E}\mbf{X}\le\alpha^8(1-\delta+o(1))n^4$. Also, for each pair of disjoint $Q,Q'\in\mc{Q}$ we have $\on{Cov}(\mbf{1}_Q,\mbf{1}_{Q'})=O(1/n)$.
In every Latin square, every nondegenerate cuboctahedron shares a hyperedge with at most $8n^3$ others (choose which hyperedge overlaps, then choose the identities of one vertex adjacent to each of those three vertices in the cuboctahedron; repeatedly applying the Latin property, we see that there is at most one choice for the remaining vertices). Since $L\in\mc{T}^\delta$ has $|\mc{Q}|\le n^4$, there are $O(n^7)$ intersecting pairs of nondegenerate cuboctahedra in $\mc{Q}$.
Thus $\on{Var}\mathbf X\le|\mc{Q}|^2\cdot O(1/n) + O(n^7) = O(n^7)$. By Chebyshev's inequality, we conclude that
\[\Pr[\mbf{L}_m\in\mc{T}_m^\delta]=\Pr[\mbf{X}\le\alpha^8(1-\delta/2)n^4]=1-o(1)>1/2.\]
That is to say, $\mc{T}_m^\delta$ is $(1/2)$-inherited from $\mc{T}^\delta$.
\end{proof}

Finally, we will need the following concentration inequality. The statement presented here appears for example in \cite[Theorem~2.11]{Kwa20}, and follows from an inequality of Freedman~\cite{Fre75}.
\begin{theorem}\label{thm:freedman-concentration}
Let $\bs\omega=(\bs\omega_1,\ldots,\bs\omega_N)$ be a sequence of independent, identically distributed random variables with $\Pr[\bs\omega_i=1]=p$ and $\Pr[\bs\omega_i=0]=1-p$. Let $f:\{0,1\}^N\to\mb{R}$ satisfy $|f(\bs\omega)-f(\bs\omega')|\le K$ for all pairs $\bs\omega,\bs\omega'\in\{0,1\}^N$ differing in exactly one coordinate. Then
\[\Pr[|f(\bs\omega)-\mb{E}f(\bs\omega)|>t]\le \exp\left(-\frac{t^2}{4K^2Np+2Kt}\right).\]
\end{theorem}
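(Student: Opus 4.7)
The plan is to deduce this statement from Freedman's martingale inequality~\cite{Fre75} applied to the Doob martingale associated to $f(\bs\omega)$. First I would introduce the filtration $\mc{F}_i=\sigma(\bs\omega_1,\ldots,\bs\omega_i)$ and the Doob martingale $M_i=\mb{E}[f(\bs\omega)\mid\mc{F}_i]$, so that $M_0=\mb{E}f(\bs\omega)$ and $M_N=f(\bs\omega)$. The point is to show that the martingale increments $D_i=M_i-M_{i-1}$ are uniformly bounded by $K$ and, crucially, that their conditional variances carry a factor of $p$.

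For this, write $g_i(b)=\mb{E}[f(\bs\omega)\mid \bs\omega_1,\ldots,\bs\omega_{i-1},\bs\omega_i=b]$, which is a random variable measurable with respect to $\mc{F}_{i-1}$. Using independence of the coordinates and the bounded-differences hypothesis on $f$, the difference $a:=g_i(1)-g_i(0)$ satisfies $|a|\le K$ almost surely, since $g_i(1)-g_i(0)$ is an average of differences of $f$ over pairs of inputs differing in exactly the $i$th coordinate. Conditional on $\mc{F}_{i-1}$, the increment $D_i$ therefore takes the value $(1-p)a$ with probability $p$ and $-pa$ with probability $1-p$, giving $|D_i|\le K$ and
\[\mb{E}[D_i^{2}\mid\mc{F}_{i-1}]=p(1-p)a^{2}\le pK^{2}.\]
In particular the predictable quadratic variation is deterministically bounded: $\sum_{i=1}^{N}\mb{E}[D_i^{2}\mid\mc{F}_{i-1}]\le NpK^{2}$.

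Having verified these two inputs, I would invoke Freedman's inequality in the form
\[\Pr\left[M_N-M_0\ge t,\ \sum_{i=1}^N\mb{E}[D_i^2\mid\mc{F}_{i-1}]\le v\right]\le\exp\left(-\frac{t^2}{2(v+Kt)}\right),\]
applied with $v=NpK^2$, and the same bound applied to the martingale $(-M_i)$ to handle the lower tail. Combining the two tails and absorbing the resulting factor of $2$ into the exponent then yields the stated two-sided bound, with the quoted constants $4K^{2}Np$ and $2Kt$ in the denominator (any slack in these constants is harmless for the applications).

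The main potential obstacle is careful bookkeeping of the conditional variance. If one uses only $|D_i|\le K$ and applies Azuma--Hoeffding, one recovers merely a denominator of order $NK^2$, with no factor of $p$; this would be insufficient for applications such as \cref{thm:strong-lower-cuboctahedra} where one takes $p=\alpha/n$. It is therefore essential to exploit the two-point distribution of $D_i$ conditional on $\mc{F}_{i-1}$ to extract the factor $p(1-p)$ in the variance, and this is precisely what Freedman's refinement of Azuma's inequality was designed to accommodate. Beyond this point, the argument is entirely standard martingale analysis.
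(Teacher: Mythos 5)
Your derivation is essentially the argument the paper has in mind: the paper gives no proof of this statement, merely citing \cite[Theorem~2.11]{Kwa20} and Freedman~\cite{Fre75}, and your route --- the Doob (edge-exposure) martingale, the observation that conditionally on $\mc F_{i-1}$ the increment is $(1-p)a$ or $-pa$ with $|a|\le K$, hence $|D_i|\le K$ and $\mb E[D_i^2\mid\mc F_{i-1}]=p(1-p)a^2\le pK^2$, followed by Freedman's inequality with $v=NpK^2$ --- is exactly the standard deduction being referenced. The one soft spot is your final sentence: the two-tail union bound gives $2\exp\bigl(-t^2/(2K^2Np+2Kt)\bigr)$, and the factor $2$ cannot be absorbed into the weaker exponent $t^2/(4K^2Np+2Kt)$ uniformly in $t$ (for small $t$ the difference of the two exponents is below $\log 2$; indeed the displayed bound without a leading constant fails for, say, $N=1$, $p=1/2$, $f(\omega)=K\omega$, $t$ just under $K/2$). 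This is really an artifact of how the inequality is quoted rather than of your argument, and it is irrelevant for the application in \cref{thm:strong-lower-cuboctahedra}, but you should either retain the leading factor $2$ or restrict to $t$ large enough that the exponent gap exceeds $\log 2$, rather than assert the absorption in general.
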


We are ready to prove \cref{thm:strong-lower-cuboctahedra}.
\begin{proof}[Proof of \cref{thm:strong-lower-cuboctahedra}]
Let $\alpha > 0$, which will later be chosen to be small with respect to $\delta$. Let $\mbf{G}^\ast$ be as in \cref{lem:TRP-independent}. Let $\mc{T}_m^\delta$ be the property that a partial Latin square, not necessarily having exactly $m$ edges, has at most $(1-\delta/2)\alpha^8n^4$ nondegenerate cuboctahedra. This property is clearly monotone decreasing. Let $\mbf{P}\sim\mb{L}(n,\alpha n^2)$ and $\mbf{L}$ be a uniformly random order-$n$ Latin square. Let $X_H(\mbf{L})$ be the number of nondegenerate cuboctahedra in $\mbf{L}$. Then \cref{lem:octahedra-inheritance,thm:transference-TRP} along with \cref{lem:TRP-independent} show
\[\Pr[X_H(\mbf{L})\le(1-\delta)n^4]\le\exp(n^{2-\gamma})\Pr[\mbf{P}\in\mc{T}_m^\delta]\le\exp(O(n^{2-\gamma}))\cdot O(\Pr[\mbf{G}^\ast\in\mc{T}_m^\delta]).\]

It now suffices to study $\mbf{G}^\ast$ (via $\mbf B_{n,\alpha/n}$, which determines it). Let $\mbf{Q}'$ be the maximum size of a collection of nondegenerate cuboctahedra in $\mbf{G}^\ast$ for which every hyperedge is in at most $2\alpha^7n^2$ of the cuboctahedra in the collection. We claim that
\begin{equation}\label{eq:collection-mean}
\mb{E}\mbf{Q}'\ge\alpha^8(1-\delta/4)n^4,
\end{equation}
if $\alpha$ is sufficiently small with respect to $\delta$ (and $n$ is sufficiently large). For the moment, assume this is true.

Note that we can view $\mbf{Q}'$ as a function of $n^3$ different independent zero-one random variables (one for each possible hyperedge of $\mbf B_{n,\alpha/n}$). If we add a hyperedge $e$ to $\mbf B_{n,\alpha/n}$, this can result in at most one hyperedge being added to $\mbf{G}^\ast$ ($e$ itself) and it can result in at most 3 hyperedges being removed from  $\mbf{G}^\ast$ (hyperedges which share more than one vertex with $e$). Similarly, removing a hyperedge from $\mbf B_{n,\alpha/n}$ affects $\mbf{G}^\ast$ by at most 3 hyperedges. Now, adding a hyperedge to $\mbf{G}^\ast$ increases $\mbf{Q}'$ by at most $2\alpha^7 n^2$ (and can never decrease $\mbf{Q}'$). So, $\mbf{Q}'$ is a $O(n^2)$-Lipschitz. \cref{thm:freedman-concentration} shows that
\begin{align*}
\Pr[\mbf{G}^\ast\in\mc{T}_m^\delta]&\le\Pr[\mbf{Q}'\le\alpha^8(1-\delta/2)n^4]\le\Pr[\mbf{Q}'\le\mb{E}\mbf{Q}'-\alpha^8\delta n^4/4]\\
&\le\exp\bigg(-\frac{(\alpha^8\delta n^4/4)^2}{4(O(n^2))^2n^3(\alpha/n)+2(O(n^2))(\alpha^8\delta n^4/4)}\bigg) = \exp(-\Omega_{\alpha,\delta}(n^2)),
\end{align*}
and the result follows.

Now it suffices to prove \cref{eq:collection-mean}. Let $\mbf{Q} = X_H(\mbf{G}^\ast)$ be the number of nondegenerate cuboctahedra in $\mbf{G}^\ast$. For a triple $T$ let $\mbf{Q}_T$ be the number of nondegenerate cuboctahedra in $\mbf{G}\cup\{T\}$ which include the hyperedge $T$, and let $\mbf{Q}_2$ be the sum of $\mbf Q_T$ over all $T\in \mbf B_{n,\alpha/n}$ for which $\mbf Q_T\ge 2\alpha^7 n^2$. We have
\[\mbf{Q}'\ge\mbf{Q}-\mbf{Q}_2\]
since we can consider the collection of nondegenerate cuboctahedra in $\mbf{G}^\ast$ and simply remove all cuboctahedra which have an edge that is in more than $2\alpha^7n^2$ cuboctahedra of $\mbf{B}_{n,\alpha/n}$. Now,
\[\mb{E}\mbf{Q} = (1+O(1/n))n^{12}\cdot\bigg(\frac{\alpha}{n}\bigg)^8\bigg(1-\frac{\alpha}{n}\bigg)^{24n+O(1)} = (e^{-24\alpha}+O(1/n))\alpha^8n^4\]
by linearity of expectation: a specific nondegenerate cuboctahedron will be in $\mbf{G}^\ast$ precisely if all its 8 hyperedges are in $\mbf{G}$ and all triples sharing an edge with one of these 8 hyperedges (of which there are $24n+O(1)$) are not in $\mbf{G}$.

We now fix a triple $T$ and study $\mbf{Q}_T$, which is a degree-7 polynomial of independent random variables. We see $\mu = \mb{E}\mbf{Q}_T = (1+O(1/n))n^9(\alpha/n)^7 = \alpha^7n^2 + O(n)$, and a straightforward second-moment calculation yields $\on{Var}\mbf{Q}_T = O(n^3)$. Therefore
\[\Pr[\mbf{Q}_T\ge 2\alpha^7n^2]\lesssim_\alpha n^{-1}.\]
In particular, for sufficiently large $n$ we have
\begin{align*}
\mb{E}\mbf{Q}_2 &= n^3\bigg(\frac{\alpha}{n}\bigg)\mb{E}\mbf{Q}_T\mbm{1}_{\mbf{Q}_T\ge 2\alpha^7n^2}\le 3\alpha n^2\mb{E}(\mbf{Q}_T-\mu)\mbm{1}_{\mbf{Q}_T\ge 2\alpha^7n^2}\\
&\le 3\alpha n^2(\mb{E}(\mbf{Q}_T-\mu)^2)^{1/2}\Pr[\mbf{Q}_T\ge 2\alpha^7n^2]^{1/2}\le 3\alpha n^2(O(n^3))^{1/2}(O_\alpha(n^{-1}))^{1/2}\lesssim_\alpha n^3.
\end{align*}
Thus as long as $e^{-24\alpha}\ge 1-\delta/8$ we find
\[\mb{E}\mbf{Q}'\ge\mb{E}\mbf{Q}-\mb{E}\mbf{Q}_2\ge(1-\delta/4)\alpha^8n^4\]
for $n$ sufficiently large. This establishes \cref{eq:collection-mean} and we are done.
\end{proof}
\begin{remark}
It appears that the method of proof of \cref{thm:strong-lower-cuboctahedra} (and similarly \cite[Theorem~1.2(a)]{KSS21}) may apply to more general colored triple systems $H$, though we do not pursue this here.
\end{remark}

\section{High Girth Latin Squares}\label{sec:high-girth}
Recall that a \textit{Steiner triple system} of order $N$ is an $N$-vertex triple system (i.e., $3$-uniform hypergraph) such that every pair of vertices is contained in exactly one triple. Equivalently, this is a \textit{triangle-decomposition} of the complete graph $K_N$. A foundational theorem of Kirkman \cite{kirkman1847problem} states that order-$N$ Steiner triple systems exist if and only if $N \equiv 1,3 \pmod{6}$. The necessity of the arithmetic condition is easy to see: a graph can have a triangle-decomposition only if all its degrees are even and the total number of edges is a multiple of three.

As mentioned in the introduction, in \cite{KSSS22} the authors proved the analogue of \cref{thm:high-girth-latin-square} for Steiner triple systems. We begin by recalling the relevant definition and theorem.

\begin{definition}\label{def:triple system girth}
	The \textit{girth} of a triple system $\mc S$ is the smallest $g>3$ such that there exists a set of $g$ vertices spanning $g-2$ triangles of $\mc S$. If $\mc S$ contains no such vertex set, we say it has infinite girth.
\end{definition}

\begin{theorem}[\text{\cite[Theorem 1.1]{KSSS22}}]\label{thm:high-girth-sts}
	Given $g\in\mb{N}$, there is $N_g\in\mb{N}$ such that if $N \ge N_g$ and $N$ is congruent to $1$ or $3\pmod{6}$, then there exists a Steiner triple system of order $N$ and with girth greater than $g$.
\end{theorem}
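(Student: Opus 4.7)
The plan is to construct a high-girth Steiner triple system via a two-phase strategy: first build an ``approximate'' high-girth partial Steiner triple system using a constrained random greedy process, and then complete it using an absorption framework that preserves the girth condition throughout. The overall architecture mirrors recent proofs of the existence of designs (Keevash; Glock--K\"uhn--Lo--Osthus), but every stage must be modified so that the forbidden girth-violating configurations (sets of at most $g$ vertices spanning at least $|V|-2$ chosen triples) never arise during the construction.

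In the first phase, I would reserve a small portion of the edges of $K_N$ to be used later as an absorbing structure, and then run a random greedy triangle-packing on the remainder. At each step, one selects a uniformly random triangle among those whose three edges are currently uncovered and whose addition to the already-chosen triples does not create a girth-violating configuration of size at most $g$. Tracking this process with the differential equations method, in the spirit of Bohman's analysis of the triangle-free process and Bohman--Keevash's $H$-free process, one shows that the process runs until only $o(N^2)$ edges remain uncovered, and that the resulting partial Steiner triple system is pseudorandom: codegrees, extension counts, and more elaborate statistics (of the form ``how many ways to legally extend a given small configuration'') all concentrate around their expected values. The key pseudorandomness statement needed is strong enough that the partial system looks, to every bounded-size test, like a uniformly random sparse triple system subject to the girth condition.

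In the second phase, one uses iterative absorption to complete the partial system to an honest Steiner triple system on $N$ vertices. This requires the construction of ``high-girth absorbers'': gadgets that can accommodate any sufficiently small admissible leftover by augmenting it into a full triangle decomposition, with no combination of their triples (together with the greedy part) creating a forbidden configuration. The absorbers are built level by level, at nested scales, where each level's absorber handles the leftover produced at the previous level; a fractional triangle-decomposition/nibble argument on the pseudorandom partial system from phase one supplies the completions.

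The main obstacle, and the technical heart of \cite{KSSS22}, is precisely the interaction between the two phases. The girth constraint is a \emph{global} condition: a forbidden configuration could mix triples from the random greedy output, from internal absorber triples at several nested scales, and from the final completion step. Preventing this requires extremely precise control of extension counts in the pseudorandom partial system produced in phase one (in effect, showing it behaves like a uniformly random girth-constrained packing up to the number of extensions relevant to $g$), together with an absorber design flexible enough that one can always choose completions avoiding short cycles. The divisibility hypothesis $N \equiv 1,3 \pmod 6$ is used in the usual way to make the final completion possible, not to avoid girth configurations.
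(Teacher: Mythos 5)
Your plan is essentially the approach actually used in \cite{KSSS22} (and outlined in \cref{sub:nutshell} of this paper): an initial sparsification by the girth-constrained random greedy triangle process, followed by iterative absorption with the girth condition tracked globally across the greedy triples, the cover-down triples, and the absorber's decompositions. The only small descriptive slip is that the nested scales are handled by cover-down steps along a vortex, with a single high-girth absorbing structure at the innermost set that absorbs any triangle-divisible leftover (rather than absorbers at every level), but this does not change the fact that your outline coincides with the proof architecture of the cited result.
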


Order-$N$ Latin squares are naturally equivalent to triangle-decompositions of the complete tripartite graph $K_{N,N,N}$ (the three parts correspond to rows, columns, and symbols). In this language, the definition of girth in the introduction is equivalent to \cref{def:triple system girth} and \cref{thm:high-girth-latin-square} is equivalent to the statement that for every fixed $g$ and sufficiently large $N \in \mb N$ there exists a triangle-decomposition of $K_{N,N,N}$ with girth greater than $g$.

In this section we will outline the proof of \cite[Theorem 1.1]{KSSS22} and explain the adaptations necessary to prove \cref{thm:high-girth-latin-square}. We omit some details (even salient ones!) that are the same in the tripartite and non-partite cases. For a more detailed outline, addressing some of the subtleties and difficulties involved, we refer the reader to \cite[Section 2]{KSSS22} (the full proof of \cref{thm:high-girth-sts} is of course contained in \cite{KSSS22} as well).

We will need the following notation and definition.

\begin{notation}
	For a graph $G\subseteq K_{N,N,N}$, we write $G_{i,j}=G[V^i\cup V^j]$ for the graph of edges between $V^i$ and $V^j$. Throughout, indices that naturally come in threes due to a tripartite structure are taken modulo $3$ (so, for example, every edge lies in $G_{j-1,j}$ for some $j\in \{1,2,3\}$).
\end{notation}

\begin{definition}
	We say that $G \subseteq K_N$ is \textit{triangle-divisible} if all the vertex degrees in $G$ are even and $|E(G)|$ is a multiple of three. We call $G \subseteq K_{N,N,N}$ \textit{triangle divisible} if for every $1 \leq j \leq 3$ and $v \in V^j$, we have $\deg_G(v,V^{j-1}) = \deg_G(v,V^{j+1})$.
\end{definition}

We remark that the previous definition contains a certain abuse: $K_{N,N,N}$ can be viewed as a subgraph of $K_M$, for $M \geq 3N$. However, it will always be clear from context whether we are thinking of $G$ as a tripartite graph.

Note that if $G$ is obtained from a triangle divisible graph $H$ by removing a triangle-divisible subgraph of $H$ (so, in particular, by removing a collection of edge-disjoint triangles) then $G$ itself is triangle divisible.

\subsection{High-girth iterative absorption in a nutshell}\label{sub:nutshell}

Let $g \in \mb N$ and let $K$ be either $K_N$ or $K_{N,N,N}$. If $K=K_N$ we additionally assume that $N \equiv 1,3 \pmod{6}$. Denote by $V^1,V^2,V^3$ the three parts of $K_{N,N,N}$. We wish to prove that if $N$ is sufficiently large then there exists a triangle-decomposition of $K$ with girth greater than $g$. We do so by describing a probabilistic algorithm that whp constructs such a decomposition. We build on the method of \textit{iterative absorption}, and especially its application to triangle-decompositions described in \cite{BGKLMO20}.

Besides iterative absorption, the other major ingredient in the proof is the \textit{high girth triangle removal process}. This is a simple random greedy algorithm for constructing partial triangle-decompositions with girth greater than $g$, defined as follows.  Beginning with an empty collection of triangles $\mc C(0)$, for as long as possible, choose a triangle $T^*$ in $K$ uniformly at random, subject to the constraint that $\mc C(t) \cup \{T^*\}$ is a partial triangle-decomposition of $K$ with girth greater than $g$. Then, set $\mc C(t+1) = \mc C(t) \cup \{T^*\}$. This process, with $K=K_N$, was analyzed independently by Glock, K\"uhn, Lo, and Osthus \cite{GKLO20} and by Bohman and Warnke \cite{BW19}. They showed that whp the process constructs an approximate triangle-decomposition of $K$ (i.e., covering all but a $o(1)$-fraction of $E(K)$). A generalization of this process \cite[Section 9]{KSSS22} was used by the authors as a crucial component in the proof of \cref{thm:high-girth-sts}. In particular, the more general process is applicable to the tripartite setting of \cref{thm:high-girth-latin-square}, and allows a more general family of constraints than just those corresponding to girth.

The high-girth iterative absorption procedure can be broken down as follows:

\subsubsection*{The vortex}

We fix a small constant $\rho>0$, and a \textit{vortex} $V(K) = U_0 \supseteq U_1 \supseteq \ldots \supseteq U_\ell$, where $\ell = O(1)$ is a large constant and $|U_{i+1}| = (1+o(1))|U_i|^{1-\rho}$ for each $i<\ell$. We note that $|V(K)| = (1+o(1))|U_\ell|^{(1-\rho)^{-\ell}} = |U_\ell|^{O(1)}$. If $K=K_{N,N,N}$, we additionally require that for every $i$, the intersection of $U_i$ with each of $V^1,V^2$, and $V^3$ has the same cardinality.

\subsubsection*{The absorber graph}

We set aside an \textit{absorber} $H \subseteq K$. This is a graph, containing $U_\ell$ as an independent set, with the property that for every triangle-divisible graph $L \subseteq K[U_\ell]$, the graph $H \cup L$ admits a triangle-decomposition with girth greater than $g$. The construction for $K=K_N$ is given in \cite[Theorem 4.1]{KSSS22}. However, this construction is not tripartite, as would be required when $K=K_{N,N,N}$. \cref{thm:absorber}, which we prove below, is the necessary tripartite analogue of \cite[Theorem 4.1]{KSSS22}.

\subsubsection*{Initial sparsification}

After setting aside the absorber, we perform the high-girth triangle removal process to obtain a partial Steiner triple system $\mc I$ covering all but a $o(1)$-fraction of the edges in $K \setminus H$. We obtain the lower bounds in \cite[Theorem 1.3]{KSSS22} and \cref{thm:pasch-free} by essentially multiplying the number of choices at each step of this process (we detail the calculation necessary to obtain \cref{thm:pasch-free} in \cref{sub:counting-intercalate-free}). We remark that the initial sparsification has an important role beyond its usefulness in enumeration; see \cite{KSSS22} for more details.

\subsubsection*{Cover down}

The heart of the iterative absorption machinery is a ``cover down'' procedure. At each step $k$, given a set of edge-disjoint triangles  $\mc M_0\cup\dots\cup \mc M_{k-1}$ in $K\setminus H$ covering all possible edges that are not contained in $U_k$, we find an augmenting set of edge-disjoint triangles $\mc M_k$ which covers all possible edges except ones contained in $U_{k+1}$. We do this in such a way that $\mc M_0\cup\dots\cup \mc M_{k}$ has girth greater than $g$.

After $\ell$ steps of this procedure, we will have found a set of edge-disjoint triangles $\mc M_0\cup\dots\cup \mc M_{k-1}$ in $E(K) \setminus E(H)$ covering all edges that are not contained in $U_\ell$. We then use the defining property of the absorber to transform this into a triangle-decomposition of $K$. (Technically, beyond the fact that $\mc M_0\cup\dots\cup \mc M_{k-1}$ has high girth, we need to maintain the property that $\mc M_0\cup\dots\cup \mc M_{k-1}\cup \mc B$ has high girth, for each of the possible triangle-decompositions $\mc B$ associated with the absorber $H$).

Each iteration of the cover down procedure is proved using a three-stage randomized algorithm. The task is to find a set of edge-disjoint triangles in a particular graph $G$, covering all edges except those in a particular set $U$, and avoiding ``obstructions to having high girth'' with a particular collection of previously selected triangles.
\begin{itemize}
	\item First, we run a generalized version of the high-girth triangle removal process to find an appropriate set of triangles $\mc M^*$ covering \emph{almost} all of the edges which do not lie in $U$ (and none of the edges inside $U$). Most of the leftover edges will be contained in a quasirandom \emph{reserve graph} $R$, which is set aside before starting the process (this is a convenient way to control the approximate structure of the graph of leftover edges).
	
	In order for the process to succeed, it is necessary for its initial conditions to be quite regular. Specifically, for every edge, the number of available triangles including that edge (i.e., those that we permit ourselves to use) must vary by at most a multiplicative factor of $1 \pm |U|^{-c}$, with $c$ an absolute constant.
	
	Na\"ively, one might think to simply take our set of available triangles to be the set of triangles whose addition would not violate our girth condition. However, this set of triangles is not regular enough, and it is necessary to perform a \textit{regularity boosting} step to obtain an appropriately regular subset of these triangles. Specifically, in \cite[Lemma 5.1]{KSSS22} (which is based on \cite[Lemma 4.2]{BGKLMO20}), we prove that given a set $\mc T$ of triangles satisfying certain weak regularity and ``extendability'' conditions, one can find a subset $\mc T'\subseteq T$ satisfying much stronger regularity conditions. This is accomplished by fixing an appropriate weight for each triangle in $\mc T$, and randomly subsampling with these weights as probabilities. The weights (which essentially correspond to a \emph{fractional} triangle-decomposition of an appropriate graph) are constructed using weight-shifting ``gadgets'' defined in terms of copies of the complete graph $K_5$. Suitable gadgets are not available in the tripartite setting. Hence, we build on the more sophisticated techniques of Bowditch and Dukes \cite{BD19} and Montgomery \cite{Mon17} to prove \cref{lem:reg}, which is the necessary tripartite regularity-boosting lemma. As stated in the introduction, this regularity-boosting lemma is the most substantial difference between the partite and non-partite cases.
	
	\item The remaining leftover edges (i.e., the edges in $G\setminus G[U]$ which are not covered by $\mc M^*$) can be classified into two types: ``internal'' edges which lie completely outside $U$, and ``crossing'' edges which have a single vertex in $U$. To handle the remaining internal edges, we use a random greedy algorithm to choose, for each leftover internal edge $e$, a suitable covering triangle $T_e$.
	
	\item At this point, the only leftover edges each have one vertex outside $U$ and one vertex inside $U$. This allows us to reduce the problem to a simultaneous perfect matching problem: For every $v \in V(G) \setminus U$, let $W_v \subseteq U$ be the set of vertices $u$ such that $uv$ is still uncovered. Suppose that $M_v$ is a perfect matching of $W_v$. Then $\{ v \cup e : e \in M_v \}$ is a set of edge-disjoint triangles covering all the leftover crossing edges incident to $v$. Thus, it suffices to find a family of perfect matchings $\{M_v\}_{v \in V(G) \setminus U}$ such that the corresponding sets of triangles are edge-disjoint and satisfy appropriate girth properties.
	
	In \cite{KSSS22} (i.e., the non-partite case $K=K_N$), the matchings $M_v$ are found using a robust version of Hall's matching condition (cf.\ \cite[Section 6]{KSSS22}), applied to a uniformly random balanced bipartition of $U$. In the tripartite case we consider here the situation is simpler: the graph induced by each $W_v$ is already bipartite due to the structure of $K_{N,N,N}$. Moreover, this bipartition of $W_v$ is balanced, as $W_v$ is obtained by removing a triangle-divisible graph from $K_{N,N,N}$. In particular we note that although the majority of the link graph is comprised of edges from the reserve graph, one does not need to maintain ``divisibility'' constraints for the reserve graph and instead divisibility follows simply by construction.
\end{itemize}

The final difference between the tripartite and non-partite cases concerns conditions regarding the graph of uncovered edges and the set of available triangles that must be verified before each stage of the iteration. One such set of conditions, called ``iteration-typicality'', is given in \cite[Definition 10.1]{KSSS22}. These must be replaced by a tripartite analogue, which we now define.

\begin{definition}[Tripartite iteration-typicality]\label{def:tripartite-iteration-typical}
	Let $n \leq N$. Fix a descending sequence of subsets $V(K_{n,n,n}) = U_k\supseteq \dots \supseteq U_\ell$. Consider a graph $G\subseteq K_{n,n,n}$ and a set of triangles $\mc{A}$ in $G$. We say that $(G,\mc{A})$ is \emph{$(p,q,\xi,h)$-tripartite-iteration-typical} (with respect to our sequence of subsets) if for every $1 \leq j \leq 3$:
	\begin{itemize}
		\item for every $k \le i < \ell$, every set $W \subseteq (V^j \cup V^{j+1}) \cap U_i$ of at most $h$ vertices is adjacent (with respect to $G$) to a $(1\pm \xi)p^{|W|}$ fraction of the vertices in $U_i \cap V^{j-1}$ and $U_{i+1} \cap V^{j-1}$; and
		
		\item for any $i,i^*$ with $k\le i <\ell$, and $i^*\in \{i,i+1\}$, and any edge subset $Q\subseteq G[U_i] \cap G_{j,j+1}$ spanning $|V(Q)|\le h$ vertices, a $(1\pm\xi)p^{|V(Q)|}q^{|Q|}$-fraction of the vertices $u\in U_{i^*} \cap V^{j-1}$ are such that $uvw\in\mc{A}$ for all $vw\in Q$.
	\end{itemize} 
\end{definition}

To summarize, here are the changes required to the proof of \cref{thm:high-girth-sts} in order to prove \cref{thm:high-girth-latin-square}:

\begin{itemize}
	\item The vortex $V(K_{N,N,N})=U_0\supseteq \dots\supseteq U_\ell$ should be chosen such that each $U_k$ has the same number of vertices in each $V^j$.
	
	\item In the final step of the cover down procedure, in the non-partite case the problem was first reduced to a bipartite matching problem by taking a random bipartition of $U_{i+1}$. In the tripartite setting this is not necessary as the bipartite structure is already induced by $K_{N,N,N}$.
	
	\item During the iterations, replace iteration-typicality (\cite[Definition 10.1]{KSSS22}) with tripartite iteration-typicality (\cref{def:tripartite-iteration-typical}) with $h=6$. The analysis in \cite[Section~9.1]{KSSS22}, where iteration-typicality is first established, requires only very minimal changes. Similarly, the verification that iteration-typicality is maintained throughout the iterations (which is done in \cite[Section 10.4.2]{KSSS22}) requires only small changes.
	
	\item In \cite{KSSS22}, the regularity boosting step that precedes the high-girth triangle removal process relies on ``gadgets'' which are not tripartite. We prove a tripartite regularity boosting lemma in \cref{sub:triangle-regularization}.
	
	\item The absorbing structure must be tripartite. We give such a construction in \cref{sec:absorbers}.
\end{itemize}

The remainder of this section is devoted to constructing tripartite high-girth absorbers (\cref{sec:absorbers}), proving a tripartite regularity boosting lemma (\cref{sub:triangle-regularization}), and providing the calculations that yield the lower bound in \cref{thm:pasch-free} (\cref{sub:counting-intercalate-free}).

\subsection{Efficient tripartite high-girth absorbers}\label{sec:absorbers}
Recall that a tripartite graph is \emph{triangle-divisible} if for every vertex, its degree to both other parts is the same. This is a necessary but insufficient condition for triangle-decomposability. In this section we explicitly define a high-girth ``absorbing structure'' that will allow us to find a triangle-decomposition extending any triangle-divisible tripartite graph on a specific triple of vertex sets. Importantly, the size of this structure is only polynomial in the size of the distinguished vertex sets, which is needed for our proof strategy. For a set of triangles $\mc R$, let $V(\mc R)$ be the set of all vertices in these triangles.

The following theorem encapsulates the properties of our absorbing structure, and is basically the same as \cite[Theorem~4.1]{KSSS22} (we just need to be a bit careful to ensure that everything is tripartite). Throughout, every tripartite graph will have a \emph{fixed} tripartition (i.e., each vertex has a color in $\{1,2,3\}$). When we refer to a subgraph of a graph $G$, this subgraph inherits the tripartition $G$.

\begin{theorem}\label{thm:absorber}
There is $C_{\ref{thm:absorber}}\in\mb{N}$ so that for $g\in\mb{N}$ there exists $M_{\ref{thm:absorber}}(g)\in\mb{N}$ such that the following holds. For any $m\ge 1$, there is a tripartite graph $H$ with at most $M_{\ref{thm:absorber}}(g)m^{C_{\ref{thm:absorber}}}$ vertices containing a distinguished independent set $X^1\cup X^2\cup X^3$ (where each $X^j$ contains exactly $m$ vertices of color $j$), satisfying the following properties.
\begin{enumerate}[{\bfseries{Ab\arabic{enumi}}}]
    \item\label{AB1} For any triangle-divisible tripartite graph $L$ on $X := X^1\cup X^2\cup X^3$ (with a consistent tripartition) there exists a triangle-decomposition $\mc S_L$ of $L\cup H$ which has girth greater than $g$.
    
    \item\label{AB2} Let $\mc{B} = \bigcup_L\mc{S}_L$ (where the union is over all triangle-divisible tripartite graphs $L$ on the vertex set $X$) and consider any tripartite graph $K$ containing $H$ as a subgraph. Say that a triangle in $K$ is \emph{nontrivially $H$-intersecting} if it is not one of the triangles in $\mc B$, but contains a vertex in $V(H)\setminus X$.
    
    Then, for every set of at most $g$ triangles $\mc{R}$ in $K$, there is a subset $\mc{L}_{\mc{R}}\subseteq\mc{B}$ of at most $M_{\ref{thm:absorber}}(g)$ triangles such that every Erd\H{o}s configuration $\mc{E}$ on at most $g$ vertices which includes $\mc{R}$ must either satisfy $\mc{E}\cap\mc{B}\subseteq\mc L_\mc R$ or must contain a nontrivially $H$-intersecting triangle $T\notin\mc R$.
\end{enumerate}
\end{theorem}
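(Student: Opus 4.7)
The plan is to adapt the construction of \cite[Theorem~4.1]{KSSS22} to the tripartite setting, modifying only those components that rely on non-tripartite structure. I would proceed in three stages.

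\emph{Stage 1 (canonical decomposition).} Using a greedy tripartite triangle-packing argument together with the triangle-divisibility hypothesis on $L$, I would first show that any triangle-divisible tripartite $L$ on $X$ admits a triangle-decomposition using $X$ together with at most $O_g(m)$ auxiliary tripartite vertices (which will be added to $H$), modulo a constant-sized residue of $O_g(1)$ tripartite triangles entirely on $X$. This reduces the absorption problem to that of simultaneously being able to absorb any bounded set of triangles on $X$.

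\emph{Stage 2 (tripartite transformer gadgets).} For each triple $(x,y,z)\in X^1\times X^2\times X^3$, I would construct a bounded-size tripartite graph $F_{xyz}$ containing $\{x,y,z\}$ which admits two triangle-decompositions $\mc{D}^+_{xyz}, \mc{D}^-_{xyz}$ whose symmetric difference is exactly $\{xyz\}$. In \cite{KSSS22} the analogous transformer gadgets are built from $K_5$-blocks; I would replace those with tripartite analogues in the spirit of the fractional-decomposition gadgets of Bowditch--Dukes \cite{BD19} and Montgomery \cite{Mon17}, with a fixed $3$-coloring consistent with the coloring of $X$.

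\emph{Stage 3 (randomized assembly and girth verification).} I would form $H$ by taking polynomially many vertex-disjoint random copies of the transformers, indexed so that every triangle on $X$ has several dedicated transformers, together with the auxiliary structure from Stage~1. The decomposition $\mc{S}_L$ is then defined by, for each triangle $T$ in the canonical decomposition from Stage~1, using the $+$-decomposition of one dedicated transformer for $T$ and the $-$-decomposition for all others; this immediately yields \textbf{Ab1}. The girth claim and \textbf{Ab2} are verified by an alteration argument: a union bound shows that the expected number of Erd\H{o}s configurations of size at most $g$ that cross several transformers is $o(m^{C_{\ref{thm:absorber}}})$, so one may delete that many bad vertices (reabsorbing them into a slightly larger polynomial budget) without destroying the absorber property, leaving only Erd\H{o}s configurations essentially confined to a single transformer --- and there are only $M_{\ref{thm:absorber}}(g)$ of these up to the choice of $\mc{R}$.

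The principal obstacle will be Stage~2: the $K_5$-based transformers of \cite{KSSS22} are inherently non-tripartite and there is no off-the-shelf tripartite replacement with matching properties, so new tripartite gadgets must be designed admitting two decompositions differing in exactly the target triangle, while retaining enough internal slack for the global high-girth randomization in Stage~3 to succeed. Once suitable tripartite transformers are in hand, the structural case analysis establishing \textbf{Ab2} proceeds essentially as in \cite{KSSS22}: any Erd\H{o}s configuration $\mc{E}$ in $K$ extending $\mc{R}$ either enters $V(H)\setminus X$ nontrivially through some triangle outside $\mc{R}$, or is supported on the constantly many transformers incident to $\mc{R}$, giving the required uniform bound $|\mc{L}_{\mc{R}}|\le M_{\ref{thm:absorber}}(g)$.
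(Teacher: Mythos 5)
Your plan diverges substantially from the paper's actual construction, and there is a genuine gap at the core of Stage~1. The paper's proof does not use transformer gadgets or a greedy sparsification of $L$ at all; it chains together three explicit graph operations: a \emph{path-cover} $\wedge X$ which attaches $\Theta(|X|^4)$ augmenting length-$3$ paths so that every triangle-divisible tripartite $L$ on $X$ makes $L\cup\wedge X$ decomposable into tripartite cycles of length at most $9$; a \emph{cycle-cover} $\triangle Y$ which, for every color-preserving rooted copy of $C_3,C_6,C_9$, glues on an absorber $A(H)$ from \cite[Lemma~6.5]{BKLOT17}; and a \emph{$g$-sphere-cover} $\medcircle_g Z$ providing a deterministic girth guarantee via explicit in- and out-decompositions. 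In particular, the construction is fully deterministic --- there is no randomized assembly or alteration step.

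The concrete gap in Stage~1 is the claim that ``a greedy tripartite triangle-packing argument together with the triangle-divisibility hypothesis'' reduces an \emph{arbitrary} triangle-divisible $L$ on $X$, using only $O_g(m)$ auxiliary vertices, to a residue of $O_g(1)$ triangles on $X$. This fails on two counts. First, $L$ can be extremely sparse (e.g., a single tripartite $C_6$), so greedy packing does not reduce it at all, and more generally the leftover of greedy packing is a \emph{graph} of possibly $\Omega(m)$ edges, not a bounded set of edge-disjoint triangles. Second, $H$ must be fixed in advance of $L$, and as the path-cover construction shows, the auxiliary structure needed to serve all triangle-divisible $L$ simultaneously has $\Theta(m^4)$ vertices, far exceeding your $O_g(m)$ budget. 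Without a correct Stage~1, the transformer gadgets of Stage~2 have nothing of bounded size to act on, so the rest of the argument does not start.

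Two further points. You attribute the tripartite difficulty to the ``$K_5$-based transformers of \cite{KSSS22}'', but those gadgets appear in the regularity-boosting lemma \cite[Lemma~5.1]{KSSS22}, not in the absorber construction of \cite[Theorem~4.1]{KSSS22}; the obstruction for the absorber is much milder (one only needs tripartite analogues of the path-/cycle-/sphere-covers and a tripartite version of the BKLOT absorber, which are essentially available off the shelf). Also, the randomized girth verification in Stage~3 via deleting bad vertices is not obviously sound: removing vertices from $H$ may destroy the absorption property \textbf{Ab1} for the specific $L$ whose decomposition uses the deleted gadget, and the statement requires \textbf{Ab1} to hold for \emph{every} triangle-divisible $L$ simultaneously; the paper's explicit $g$-sphere mechanism sidesteps this entirely.
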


\begin{remark}
Note that \cref{AB1}, with $L$ as the empty graph on the vertex set $X$, implies that $H$ itself is triangle-decomposable (hence triangle-divisible).
\end{remark}

We will prove \cref{thm:absorber} by chaining together some special-purpose graph operations. Call a cycle in a tripartite graph a \emph{tripartite cycle} if its length is divisible by $3$ and every third vertex is in the same part.

\begin{definition}[Path-cover]\label{def:path-cover}
Let the \emph{path-cover} $\wedge X$ of a vertex set $X = X^1\sqcup X^2\sqcup X^3$ be the graph obtained as follows. Start with the empty graph on $X$. Then, for every unordered pair $u,v$ of distinct vertices in each part $X^j$, add $12|X|^2$ new paths of length $3$ between $u$ and $v$, introducing $2$ new vertices for each (so in total, we introduce $\sum_j 24|X|^2\binom{|X^j|}{2}$ new vertices). We call these new length-$3$ cycles \emph{augmenting paths}. Note that there are two ways to properly color a length-$3$ path between $u$ and $v$; we include exactly $6|X|^2$ of each type, so the augmenting paths between $u$ and $v$ can be decomposed into $6|X|^2$ tripartite 6-cycles.
\end{definition}

The key point is that for any triangle-divisible graph $L$ on the vertex set $X$, the edges of $L\cup\wedge X$ can be decomposed into short cycles.

\begin{lemma}\label{lem:path-cover}
If a tripartite graph $L$ on $X$ is triangle-divisible, then $L\cup\wedge X$ can be decomposed into tripartite cycles of length at most $9$. Additionally, if $L$ is triangle-divisible, then so is $L\cup\wedge X$.
\end{lemma}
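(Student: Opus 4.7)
The triangle-divisibility claim reduces to that of $\wedge X$ alone. Each internal vertex of an augmenting path has degree $1$ to each of the other two parts, since it belongs to exactly one augmenting path. For $u \in X^j$, the $12|X|^2$ augmenting paths between $u$ and each $v \in X^j \setminus \{u\}$ are split evenly into $6|X|^2$ with $u$-neighbor in $X^{j+1}$ and $6|X|^2$ with $u$-neighbor in $X^{j-1}$, so the $\wedge X$-degrees of $u$ to $X^{j+1}$ and to $X^{j-1}$ coincide; together with the triangle-divisibility of $L$ this gives the second claim.

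For the decomposition, orient each edge of $L$ cyclically as $X^j \to X^{j+1}$. Triangle-divisibility forces matching in- and out-degrees at every vertex, so $L$ decomposes into directed Eulerian closed walks cycling through parts as $X^1 \to X^2 \to X^3 \to X^1$. Splitting at any repeated vertices, we may take each walk to be a simple directed cycle of length $3k$. Fixing, say, $X^1$, each such cycle naturally breaks into $k$ length-$3$ ``chunks'' $u \to w \to x \to v$ with $u, v \in X^1$, $w \in X^2$, $x \in X^3$; for $k \le 3$ the cycle is itself a tripartite cycle of length at most $9$ and we retain it.

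For $k \ge 4$, we group the chunks into consecutive pairs (with one leftover chunk if $k$ is odd). Each pair of consecutive chunks $u \to w \to x \to u' \to w' \to x' \to u''$ combines with a single augmenting path between $\{u, u''\}$ (whose internal vertices lie in $X^2$ then $X^3$ as read from $u''$ back to $u$) to form a tripartite $9$-cycle. A leftover chunk combines with an augmenting path between its endpoints to form a tripartite $6$-cycle. This uses augmenting paths of prescribed ``classes'' per pair $\{u,v\} \subset X^j$, where a class is determined by which endpoint has its internal neighbor in $X^{j+1}$, and it may create per-pair imbalances in class usage.

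The main obstacle is absorbing this imbalance: the default decomposition of $\wedge X$ pairs off one augmenting path of each class per pair into tripartite $6$-cycles, so any net per-pair imbalance must be soaked up. For each Eulerian cycle of $L$, the excess augmenting paths form a cycle in an auxiliary multigraph on $X^j$, and we use the corrective identity that three augmenting paths between the sides of a triangle $\{a,b\}, \{b,c\}, \{c,a\} \subset X^j$ with cyclically matching internal color patterns combine into a tripartite $9$-cycle (by a direct parts check along $X^1 X^2 X^3$). Triangulating each imbalance cycle by borrowing a bounded number of ``diagonal'' augmenting paths from $\wedge X$'s supply converts the excesses into such corrective $9$-cycles, and each borrowing creates only one new imbalance path that is absorbed in a subsequent corrective cycle. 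The total number of borrowings is at most a constant multiple of $|E(L)|/3 \le |X|^2/3$, well below the $12|X|^2$ augmenting paths per pair, so the supply is amply sufficient no matter how the borrowings concentrate. Once all corrections are done, for each pair the remaining augmenting paths split evenly between the two classes and pair off into default tripartite $6$-cycles, completing the decomposition.
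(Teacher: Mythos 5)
Your proof is correct, and the top-level plan (Euler decomposition of $L$ into simple tripartite cycles, then shortening long cycles with augmenting paths) matches the paper's sketch. The genuinely tripartite wrinkle---that an augmenting path between $u,v\in X^j$ comes in two internal-coloring ``classes'', so naively closing consecutive chunk-pairs can leave the classes unbalanced over some pair and obstruct pairing the leftover of $\wedge X$ into $6$-cycles---is absent from the non-partite model \cite[Lemma~4.3]{KSSS22}, and you handle it soundly: orienting each consumed path by its internal coloring, the paths consumed while shortening one $L$-cycle trace a directed cycle on $X^1$, and triangulating each such directed cycle from a fixed apex (borrowing diagonals in balanced opposite-direction pairs) turns it into tripartite $9$-cycles while leaving the rest of $\wedge X$ balanced; your supply estimate $O(|X|^2)\ll 12|X|^2$ per pair covers the borrowings. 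The sentence ``each borrowing creates only one new imbalance path that is absorbed in a subsequent corrective cycle'' is informal as written; it is cleaner to observe outright that each diagonal is borrowed once in each direction and so never perturbs the per-pair balance. You could also avoid the correction phase altogether with an inherently balanced recursive shortening: for a tripartite cycle of length $3k\ge 12$ whose first three $X^1$-vertices are $u_1,u_2,u_3$, spend one augmenting path of each class between $u_1$ and $u_3$ (one closes the $9$-cycle through $u_2$, the other re-closes the remainder into a tripartite cycle of length $3(k-1)$), and iterate on the remainder, always cutting at its current first three $X^1$-vertices so that previously consumed augmenting-path segments accumulate harmlessly at the tail. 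This consumes a matched pair of classes at every step, so no imbalance ever arises and no corrective triangles are needed.
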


The proof is analogous to that of \cite[Lemma~4.3]{KSSS22}. First, we note that triangle-divisible graphs have a decomposition into tripartite cycles (start at a vertex and move around the graph cyclically until a collision occurs, then remove this cycle and continue). We then ``shorten'' these cycles with our augmenting paths.

Next, it is a consequence of \cite[Lemma~6.5]{BKLOT17} that for any triangle-divisible tripartite graph $H$, there is a tripartite graph $A(H)$ containing the vertex set of $H$ as an independent set such that $A(H)$ and $A(H)\cup H$ are both triangle-decomposable. (In our case, we only care about the case $H\in \{C_3,C_6,C_9\}$, in which case it is not hard to construct $A(H)$ by hand.)

\begin{definition}[Cycle-cover]\label{def:cycle-cover}
Let the \emph{cycle-cover} of a tripartite vertex set $Y$ be the graph $\triangle Y$ obtained as follows. Beginning with $Y$, for every $H\in\{C_3,C_6,C_9\}$ and every color-preserving injection $f:V(H)\to Y$ we add a copy of $A(H)$, such that each $v\in V(H)\subseteq V(A(H))$ in the copy of $A(H)$ coincides with $f(v)$ in $Y$. We do this in a vertex-disjoint way, introducing $|V(A(H))\setminus V(H)|$ new vertices each time. (Think of the copy of $A(H)$ as being ``rooted'' on a specific set of vertices in $Y$, and otherwise being disjoint from everything else.)
\end{definition}

\begin{lemma}\label{lem:cycle-cover}
Let $Y = V(\wedge X)$ be the vertex set of the graph $\wedge X$. If a graph $L$ on $X$ is triangle-divisible, then $L\cup\wedge X\cup\triangle Y$ admits a triangle-decomposition.
\end{lemma}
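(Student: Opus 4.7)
The plan is to combine the two structural ingredients: first apply \cref{lem:path-cover} to reduce $L\cup\wedge X$ to a collection of short tripartite cycles, and then use the dedicated absorber copies in $\triangle Y$ to turn each such cycle into triangles. Concretely, by \cref{lem:path-cover}, since $L$ is triangle-divisible, we can decompose the edge set of $L\cup\wedge X$ into tripartite cycles $C_1,\ldots,C_k$, each of length $3$, $6$, or $9$. Note that all these edges lie in the vertex set $Y$.

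For each $C_i$, let $H_i\in\{C_3,C_6,C_9\}$ denote the abstract cycle of the same length, and fix a color-preserving injection $f_i\colon V(H_i)\to Y$ whose image is exactly $V(C_i)$ and for which $f_i(H_i)=C_i$ as edge sets. By \cref{def:cycle-cover}, the cycle-cover $\triangle Y$ contains a dedicated vertex-disjoint copy $A_i$ of $A(H_i)$ rooted along $f_i$. Since $A(H_i)\cup H_i$ is triangle-decomposable (by the fact quoted from \cite{BKLOT17}), we may fix a triangle-decomposition $\mathcal{D}_i$ of $C_i\cup A_i$. For every remaining rooted copy of $A(H)$ in $\triangle Y$ (i.e., every injection $f$ that was not chosen for any $C_i$), we instead use the triangle-decomposition of $A(H)$ alone, which is also guaranteed by the same result. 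Let $\mathcal{D}$ be the union of all these triangle families.

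It remains to check that $\mathcal D$ is actually a triangle-decomposition of $L\cup\wedge X\cup\triangle Y$. Edges inside $Y$ come only from $L\cup\wedge X$, and they are partitioned by $C_1,\ldots,C_k$; each $C_i$'s edges appear in $\mathcal D_i$ and nowhere else. Edges with at least one endpoint outside $Y$ belong to $\triangle Y$ and, by the vertex-disjointness clause in \cref{def:cycle-cover}, each such edge lies in a unique rooted copy of $A(H)$; it is therefore covered exactly once, either by the decomposition of $A(H)\cup H$ (if that copy was assigned to some $C_i$) or by the decomposition of $A(H)$ alone. This exhausts all edges exactly once and completes the proof. The only step with any subtlety is the bookkeeping in this last paragraph, ensuring that the assignment of $C_i$'s to rooted copies of $A(H_i)$ is injective and that unassigned copies still admit a standalone triangle-decomposition; both follow immediately from the construction of $\triangle Y$ and the defining property of $A(\cdot)$.
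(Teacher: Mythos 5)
Your proof is correct and takes the same route the paper intends (the paper defers to the proof of Lemma~4.5 in the Steiner triple system paper, but the outline is exactly what you wrote: decompose $L\cup\wedge X$ via \cref{lem:path-cover} into short tripartite cycles, cover each with its dedicated rooted copy of $A(H)$ using the decomposition of $A(H)\cup H$, and use the standalone decomposition of $A(H)$ for all unused copies). The one point worth checking carefully is the injectivity of the map $C_i\mapsto(H_i,f_i)$, and you get it for free since the $C_i$ are pairwise edge-disjoint and $f_i$ determines $C_i=f_i(H_i)$; you noted this correctly.
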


The proof of \cref{lem:cycle-cover} is the same as that of \cite[Lemma~4.5]{KSSS22}, so we omit the details.

If we consider any $X = X_1\sqcup X_2\sqcup X_3$ with $|X_i| = m$, then \cref{lem:cycle-cover} implies that $\wedge X\cup\triangle(V(\wedge X))$ can ``absorb'' any triangle-divisible tripartite graph on $X$, though not necessarily in a high-girth manner. We use a tripartite modification of the \emph{$g$-sphere-cover} of \cite[Definition~4.6]{KSSS22} to provide a girth guarantee (the definition is basically the same, but we glue a $g$-sphere only onto those triples which are tripartite).

\begin{definition}[Sphere-cover]\label{def:sphere-cover}
Let the \emph{$g$-sphere-cover} of a tripartite vertex set $Z$ be the graph $\medcircle_gZ$ obtained by the following procedure. For every tripartite triple $T$ of vertices of $Z$, arbitrarily label these vertices as $a,b_1,b_2$. Then, append a ``$g$-sphere'' to the triple. Namely, first add $2g-1$ new vertices $b_3,\ldots,b_{2g},c$. Then add the edges $ab_j$ for $3\le j\le 2g$, the edges $cb_j$ for $1\le j\le 2g$, the edges $b_jb_{j+1}$ for $2\le j\le 2g-1$, and the edge $b_{2g}b_1$. This can be seen to preserve the tripartite property.

Note that every such $g$-sphere $Q$ itself has a triangle-decomposition: specifically, we define the \emph{out-decomposition} to consist of the triangles
\[c b_2 b_3,\,ab_3 b_4,\, c b_4b_5,\, a b_5 b_6,\dots,cb_{2g}b_1.\]
We also identify a particular triangle-decomposition of the edges $Q\cup T$: the \emph{in-decomposition} consists of the triangles 
\[cb_1b_2,\,ab_2b_3,\,cb_3b_4,\,a b_4 b_5,\dots,ab_{2g}b_1.\]
For a triple $T$ in $Z$, let $\mc{B}^\medcircle(T)$ be the set of all triangles in the in- and out-decompositions of the $g$-sphere associated to $T$. We emphasize that $T\notin\mc{B}^\medcircle(T)$.
\end{definition}

Finally, the proof of \cref{thm:absorber} is exactly like the proof of \cite[Theorem~4.1]{KSSS22}: let $Y = V(\wedge X)$, $Z = V(\triangle Y)$, and take $H = \wedge X\cup\triangle Y\cup\medcircle_gZ$. The girth properties guaranteed by analogues of \cite[Lemmas~4.7,~4.8]{KSSS22} are enough to show the desired girth properties here.

\subsection{Triangle-regularization of tripartite graphs}\label{sub:triangle-regularization}
Given a tripartite set of triangles with suitable regularity and ``extendability'' properties, the following lemma finds a subset which is substantially more regular. This lemma can be used in place of \cite[Lemma~5.1]{KSSS22} (which itself is an adaptation of \cite[Lemma~4.2]{BGKLMO20}) for the proof of \cref{thm:high-girth-latin-square}.
Unlike in \cite[Lemma~5.1]{KSSS22}, some notion of ``approximate triangle-divisibility'' is needed, since every triangle has exactly one edge between each pair of parts.

\begin{lemma}\label{lem:reg}
There are $C_{\ref{lem:reg}}\in\mb{N}$ and $n_{\ref{lem:reg}}:\mb{N}\times\mb{R}\to\mb{N}$ such that the following holds. We are given $q\in(0,1)$ and $C\ge C_{\ref{lem:reg}}$, and let $n\ge n_{\ref{lem:reg}}(C,q)$. Suppose $q\in(0,1)$, $\xi = C^{-8}$, and $p\in (n^{-1/12},1)$, let $G$ be a balanced tripartite graph on $3n$ vertices $V_1\sqcup V_2\sqcup V_3$, and let $\mc{T}$ be a collection of triangles of $G$, satisfying the following properties.
\begin{enumerate}
    \item Every edge $e\in E(G)$ is in $(1\pm\xi)p^2qn$ triangles of $\mc{T}$.
    
    \item\label{item:graph-extension-bounds} For every $j\in[3]$ and every set $S\subseteq V^{j-1}\cup V^j$ with $|S|\le 6$ in $G$, there are $(1\pm\xi)p^{|S|}n$ common neighbors of $S$ in $V_{j+1}$.
    
    \item\label{item:triangle-extension-bounds} For every $j\in[3]$ and every set $Q$ of at most 6 edges between $V^{j-1}$ and $V^{j}$ with $|Q|\le 6$, there are between $C^{-1}p^{|V(E)|}n$ and $Cp^{|V(Q)|}n$ vertices $u\in V_{j+1}$ which form a triangle in $\mc{T}$ with every edge in $Q$.
    
    \item\label{item:nearly-divisible} $G$ has the same number of edges between each pair of parts, and for every $j\in[3]$ and every $v\in V^j$, we have $|\deg_{V^{j-1}}(v)-\deg_{V^{j+1}}(v)|\le n^{2/3}$.
\end{enumerate}
Then, there is a subcollection $\mc{T}'\subseteq\mc{T}$ such that every edge $e\in E(G)$ is in $(1\pm n^{-1/4})p^2qn/4$ triangles of $\mc{T}'$.
\end{lemma}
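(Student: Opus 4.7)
\medskip
\noindent\textbf{Proof plan.}
The plan is to construct a weight function $w:\mc{T}\to[0,1]$ with each $w_T$ close to $1/4$, such that $\sum_{T\ni e}w_T$ lies within relative error $n^{-1/4}/2$ of $p^2qn/4$ on every edge $e$, and then include each $T\in\mc{T}$ in $\mc{T}'$ independently with probability $w_T$. A Chernoff bound on each edge count (equivalently, Freedman's inequality, \cref{thm:freedman-concentration}) followed by a union bound over the $O(n^2)$ edges would then give the conclusion, since with $|w_T-1/4|=o(1)$ the Chernoff fluctuations are of order $\sqrt{p^2qn}\ll n^{-1/4}\cdot p^2qn$ when $p\ge n^{-1/12}$.

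The main step is constructing $w$. The uniform choice $w_T^{(0)}=1/4$ already satisfies $\sum_{T\ni e}w_T^{(0)}\in(1\pm\xi)p^2qn/4$ by hypothesis~(1), so the task is to reduce the per-edge defect from $\xi p^2qn/4 = O(C^{-8}p^2qn)$ down to $O(n^{-1/4}p^2qn)$, using only triangles in $\mc{T}$. This is an approximate fractional triangle-decomposition problem on the ``defect graph.'' I would use the natural tripartite weight-shifting gadget: the \emph{octahedron} $K_{2,2,2}$ on vertices $\{a_i,b_j,c_k:i,j,k\in\{1,2\}\}$ with $a_i\in V^1$, $b_j\in V^2$, $c_k\in V^3$, whose eight triangles $a_ib_jc_k$ all lie in $\mc{T}$. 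The signed weight $\delta^{\mr{oct}}_{a_ib_jc_k}=(-1)^{i+j+k}\varepsilon$ lies in the kernel of the edge--triangle incidence matrix (each of the 12 octahedron edges lies in exactly two of its eight triangles, of opposite sign), giving the flexibility to redistribute weight across $\mc{T}$ without perturbing any edge sum. Combining this with a ``primary'' adjustment supported on short triangle-paths, I would iteratively cancel defects: in each step, an octahedron bridging an edge of large positive defect and one of large negative defect is chosen, and a perturbation of appropriate magnitude is added. Hypothesis~(\ref{item:triangle-extension-bounds}) ensures an abundance of octahedra through each edge (of order $p^{12}q^8n^4$, obtained by iterated applications of~(\ref{item:triangle-extension-bounds}) on edge sets of size at most $6$), while hypothesis~(\ref{item:nearly-divisible}) is necessary for the defect to be globally cancellable, as some form of approximate triangle-divisibility is required for any approximate fractional triangle-decomposition to exist.

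The defect-correction procedure combines the iterative balancing strategy of Bowditch--Dukes~\cite{BD19} with Montgomery's randomized gadget substitution~\cite{Mon17}, restricted to triangles in $\mc{T}$. The main obstacle I expect is precisely this restriction: the $K_5$-based gadgets used in the non-partite setting of~\cite[Lemma~5.1]{KSSS22} (after~\cite[Lemma~4.2]{BGKLMO20}) are unavailable in a tripartite graph, so tripartite octahedra must do all the work. The delicate part is showing that one can drive all edge defects down to $o(n^{-1/4}p^2qn)$ while keeping $\max_T|\delta_T|=o(1)$, using only octahedron perturbations from $\mc{T}$. Once this is established, the subsampling step and concentration are routine.
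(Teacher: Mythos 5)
Your overall framework --- build a weight function $w:\mc T\to[0,1]$ with $w_T\approx 1/4$ and per-edge sums $(1\pm o(n^{-1/4}))p^2qn/4$, then randomly subsample and apply Chernoff plus a union bound --- matches the paper's strategy exactly, and your identification that the non-partite $K_5$ gadgets of \cite[Lemma~5.1]{KSSS22} and \cite[Lemma~4.2]{BGKLMO20} are unavailable in a tripartite graph is the right diagnosis. You also correctly observe that item (4) is the approximate-divisibility hypothesis that limits how far the defect can be driven down, and that the iterative-balancing perspective of Bowditch--Dukes~\cite{BD19} is the right lens. However, your choice of gadget is wrong, and this is not a cosmetic issue.

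The octahedron weight $\delta^{\mr{oct}}_{a_ib_jc_k}=(-1)^{i+j+k}\varepsilon$ lies, as you yourself note, in the kernel of the edge--triangle incidence matrix: every edge of $K_{2,2,2}$ belongs to exactly two of its eight triangles, with opposite sign, so the perturbation changes \emph{no} edge sum. But correcting edge defects is precisely the task at hand. A gadget that preserves all edge sums cannot ``bridge an edge of large positive defect and one of large negative defect''; your sentence claiming it does so contradicts your own correct observation two lines earlier. What is actually needed is a perturbation with \emph{zero vertex-weights but nonzero edge-weights}. The paper's gadget is the alternating $\pm 1$ weight on a ``hexagonal pyramid'': a $6$-cycle $J$ alternating between $V^{j-1}$ and $V^j$, with an apex vertex $v\in V^{j+1}$ joined to all six cycle vertices through $\mc T$-triangles; the alternating signs around $J$ give vertex-weight zero at every vertex, yet edge-weight $(-1)^{d_J(f,e)}$ on each edge $f$ of $J$, which is exactly what is needed to shift edge defects around a cycle. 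Moreover, to make that step converge to $o(1)$ relative error you first need to arrange \emph{vertex}-balance (without it, the row sums of the edge-defect vector restricted to each vertex carry a bias that the $6$-cycle gadgets inherit), and the paper does this in a separate preliminary step using $K_{2,1,1}$ bowties. Your ``primary adjustment supported on short triangle-paths'' gestures in this direction, but it is left undeveloped and cannot be carried out by octahedra. Finally, the paper's iteration is not a greedy bridging of extremal edges but a global linear contraction $A(\phi)=\phi-\sum_e\phi^{\mr{disc}}(e)\psi_e$ iterated $O((\log n)^2)$ times; the contraction factor $O(\xi)$ comes from carefully comparing counts of $6$-cycles sharing $0$, $1$, or $2$ vertices with a fixed edge, using hypotheses (2)--(3). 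So while the sampling endgame and the iterative-adjustment philosophy are the same as the paper's, the core combinatorial gadget you propose cannot do the job it is assigned.
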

\begin{remark}
In order to apply the cover down scheme outlined in \cref{sub:nutshell}, the reserve graph $R$ will need to be chosen in such a way that the fourth item above is satisfied. As in \cite{KSSS22}, we sample $R$ randomly, but instead of independently including each edge with a particular probability, we choose a uniformly random subset, of a particular size, of the edges between each part.
\end{remark}
As in \cite[Lemma~5.1]{KSSS22} and \cite[Lemma~4.2]{BGKLMO20}, we will prove \cref{lem:reg} by sampling from a fractional clique-decomposition. However, the construction of this clique decomposition will be substantially more involved. We will borrow ideas from general work of Montgomery~\cite{Mon17} on fractional clique-decompositions of partite graphs (in particular, making use of ``gadgets'' to shift weight around), but certain simplifications are possible in the setting of triangle-decompositions. Also, instead of directly describing an \emph{explicit} fractional triangle-decomposition (as in \cite[Lemma~5.1]{KSSS22}, \cite[Lemma~4.2]{BGKLMO20}, and \cite{Mon17}), we construct our fractional triangle-decomposition as the limit of an iterative ``adjustment'' process. This was inspired by some related ideas of Bowditch and Dukes~\cite{BD19}.

\begin{proof}[Proof of \cref{lem:reg}]
For $v\in V(G)$ let $\mc{T}_v$ be the set of triangles of $\mc{T}$ involving $v$ and for $e\in E(G)$ let $\mc{T}(e)$ be the set of triangles of $\mc{T}$ containing $e$. Given $\phi: \mc{T}\to\mb{R}$, define the \emph{vertex-weight} of $v\in V(G)$ and \emph{edge-weight} of $e\in E(G)$ to be
\[\phi^{\mr{vtx}}(v) = \sum_{T\in\mc{T}_v}\phi(T),\qquad\phi^{\mr{edge}}(e) = \sum_{T\in\mc{T}(e)}\phi(T).\]
The total weight is $\phi^{\mr{sum}} = \sum_{T\in\mc{T}}\phi(T)$. We say $\phi$ is \emph{vertex-balanced} or \emph{$\alpha$-edge-balanced}, respectively, if
\[\phi^{\mr{vtx}}(v) = \frac{3\deg_G(v)}{2|E(G)|}\phi^{\mr{sum}}~\forall v\in V(G),\qquad\phi^{\mr{edge}}(e) = (1\pm\alpha)\frac{3}{|E(G)|}\phi^{\mr{sum}}~\forall e\in E(G).\]
Summing over the edges incident to each vertex shows that $0$-edge-balancedness implies vertex-balancedness.

Our goal is to produce $\phi_*:\mc{T}\to[0,1]$ with $\phi_*^{\mr{edge}}(e) = (1+O(n^{-1/3}))p^2qn/4$ for all $e$ (so in particular $\phi_*$ is $O(n^{-1/3})$-edge-balanced). This suffices to prove the lemma: if we take $\mc{T}'$ to be a random subcollection of $\mc{T}$ in which each $T\in\mc{T}$ included with probability $\phi_*(T)$ independently, then the Chernoff bound and union bound show that $\mc{T}'$ satisfies the desired properties with high probability.

So, for the rest of the proof we construct our desired edge-balanced weight function $\phi_*$. We do this in several steps: first, we build a vertex-balanced function by averaging over certain simple functions $\chi_{u,v}$. Then, we iteratively adjust this function to make it edge-balanced (our final edge-balanced function will be a fixed point of a certain contraction map). Finally, we divide by an appropriate constant to obtain our desired function $\phi_*:\mc{T}\to[0,1]$ with $\phi_*^{\mr{edge}}(e) = (1+O(n^{-1/3}))p^2qn/4$ for all edges $e$.

\medskip
\noindent \textit{Step 1: Constructing a near-vertex-balanced function.} For $j\in \{1,2,3\}$ and distinct $u,v\in V^j$, let $\mc{T}_{2,1,1}(u,v)$ be the set of copies of $K_{2,1,1}$ in $G$ containing $u,v$, such that both triangles of the copy of $K_{2,1,1}$ are in $\mc{T}$ (this can only happen when $u,v$ are in the size-2 part of the $K_{2,1,1}$). For $H\in\mc{T}_{2,1,1}(u,v)$ let $H(u),H(v)$ be the triangles involving $u$ and $v$ respectively.

For a vertex $u$, let
\[f_u = |\mc{T}_u|-\frac{3\deg_G(u)}{2|E(G)|}|\mc{T}|,\]
and note that for each $j\in \{1,2,3\}$, we have $\sum_{u\in V^j}f_u = 0$ since $G$ has the same number of edges between each pair of parts.

For $j\in \{1,2,3\}$ and distinct $u,v\in V^j$, define $\chi_{u,v}:\mc{T}\to\mb{R}$ by
\[\chi_{u,v}(T) = \frac{1}{|\mc{T}_{2,1,1}(u,v)|}\sum_{H\in\mc{T}_{2,1,1}(u,v)}(\mbm{1}_{T=H(u)}-\mbm{1}_{T=H(v)}),\]
and for $j\in \{1,2,3\}$, define $\phi_{V^j}:\mc{T}\to\mb{R}$ by
\[\chi_{V^j} = -\frac{1}{2n}\sum_{u,v\in V^j:u\ne v}(f_u-f_v)\chi_{u,v}.\]
We note some key facts about the vertex-weights of these functions: for any $w\in V(G)$ we have
\[\chi_{u,v}^{\mr{vtx}}(w) = \begin{cases}1&\text{if }w=u,\\-1&\text{if }w=v,\\0&\text{otherwise},\end{cases}\]
and thus, recalling that $\sum_{u\in V_i}f_u = 0$, we have
\[\chi_{V^j}^{\mr{vtx}}(w) = \begin{cases}-f_w&\text{if }w\in V^j,\\0&\text{if }w\notin V^j.\end{cases}\]

Let $\bs 1 : \mc T \to \mb R$ be the all-$1$ function and define $\phi_0 = \bs 1 +\chi_{V^1}+\chi_{V^2}+\chi_{V^3}$. Note that $\phi_0$ is vertex-balanced: for any $w\in V(G)$ we have
\[\phi_0^\mr{vtx}(w)=\sum_{T\in\mc{T}_w}(1+\chi_{V^1}(T)+\chi_{V^2}(T)+\chi_{V^3}(T)) = |\mc{T}_w| + 0 + 0 - f_w = \frac{3\deg_G(w)}{2|E(G)|}|\mc{T}|.\]

\medskip
\noindent\textit{Step 2: Defining ``adjuster'' functions.} 
For a $6$-cycle $J\subseteq G$ alternating between some $V^{j-1}$ and $V^j$, let $\mc{T}_{3,3,1}(J)$ be the set of vertices $v\in V^{j+1}$ which complete a triangle in $\mc T$ with each edge of $J$. For an edge $e\in G_{j-1,j}$, let $G(e)$ be the set of $6$-cycles $J\subseteq G$, alternating between $V^{j-1}$ and $V^j$, which contain $e$. 

For $e\in E(G)$, and a $6$-cycle
$J\in G(e)$, we define $\psi_{J,e} : \mc T \to \mb R$ by 
\[\psi_{J,e}(T) = \frac{1}{|\mc{T}_{3,3,1}(J)|}\sum_{v\in\mc{T}_{3,3,1}(J)}\sum_{e'\in J}(-1)^{d_J(e',e)}\mbm{1}_{T=e'\cup\{v\}},\]
where $d_J(e',e)$ is the minimum number of times $e'$ must be rotated around the cycle in either direction to coincide with $e$. For an edge $e\in E(G)$ between $V^{j-1}$ and $V^j$, define
\[\psi_e = \frac{1}{|G(e)|}\sum_{J\in G(e)}\psi_{J,e}.\]
We now claim that these two functions have zero vertex-weights (and can thus be used to modify edge-weights without modifying vertex-weights). Indeed, for $e\in E(G)$, $J\in G(e)$, $v\in \mc T_{3,3,1}(J)$ and any vertex $w\in V(G)$, we have
\[\sum_{T\in\mc{T}_w}\sum_{e'\in J}(-1)^{d_J(e',e)}\mbm{1}_{e'\cup\{v\}}(T) = 0.\]
(Basically, the idea is that this alternating sum around a $6$-cycle contributes $0$ to every vertex-weight.) It follows that $\psi_{J,e}^{\mr{vtx}}(w) = \psi_e^{\mr{vtx}}(w) = 0$, as claimed.

\medskip
\noindent\textit{Step 3: One-step adjustment of a vertex-balanced function}. For a function $\phi:\mc T\to \mb R$, define the edge-discrepancy function $\phi^{\mr{disc}}: E(G)\to\mb{R}$ by
\[\phi^{\mr{disc}}(e) = \phi^{\mr{edge}}(e)-\frac{3}{|E(G)|}\phi^{\mr{sum}}.\]
Note that $\phi$ is $0$-edge-balanced if and only if $\phi^\mr{disc}\equiv 0$, and note that for $v\in V^j$ and $i\neq j$, and any vertex-balanced $\phi$, we have
\begin{equation}\label{eq:edge-disc-average}
\sum_{\substack{e\in G_{i,j}}:\\v\in e}\phi^{\mr{disc}}(e) = \phi^{\mr{vtx}}(v) - \frac{3\deg_G(v)}{2|E(G)|}\phi^{\mr{sum}} + \frac{3(\deg_Gv-\deg_{V^i}(v))}{2|E(G)|}\phi^{\mr{sum}} = O(n^{2/3}\snorm{\phi}_\infty).
\end{equation}
For a vertex-balanced function $\phi:\mc T\to \mb R$ we define its one-step adjustment $A(\phi): \mc{T}\to\mb{R}$ by
\[A(\phi) = \phi - \sum_{e\in E(G)}\phi^{\mr{disc}}(e)\psi_e.\]
Note that $A(\phi)$ is again vertex-balanced, since the $\psi_e$ have vertex-weight zero.

\medskip
\noindent\textit{Step 4: Bounding discrepancy after adjustment.} We now analyze how edge-discrepancy changes between $\phi$ and $A(\phi)$. First, we claim that for any $e\in E(G)$ and $J\in G(e)$, and any $f\in E(G)$, we have
\[\psi_{J,e}^{\mr{edge}}(f) = \begin{cases}(-1)^{d_J(f,e)}&\text{if }f\in J,\\0&\text{otherwise}.\end{cases}\]
Indeed, this follows from the fact that for $e\in E(G)$ $J\in G(e)$, $v\in \mc T_{3,3,1}(J)$, and any $f\in E(G)$, we have
\[\sum_{T\in\mc{T}(f)}\sum_{e'\in J}(-1)^{d_J(e',e)}\mbm{1}_{T=e'\cup\{v\}}(T) = \begin{cases}(-1)^{d_J(f,e)}&\text{if }f\in J,\\0&\text{otherwise}.\end{cases}.\]
(Here we used that the edges between $v$ and a vertex of $J$ are zeroed out by the alternating subtraction.) It follows that for $e,f\in G_{j-1,j}$ we have
\[\psi_e^{\mr{edge}}(f) = \frac{c_0(e,f)-c_1(e,f)+c_2(e,f)-c_3(e,f)}{|G(e)|}\]
where $c_i(f,e)$ is the number of $6$-cycles $J\subseteq G_{j-1,j}$ including $f$ and $e$ with $d_J(e,f) = i$. Note that if $e,f$ are not between the same pair of parts then $\psi_f^{\mr{edge}}(e) = 0$, and that if $e = f$ then $\psi_e^{\mr{edge}}(f) = 1$.

Note that $\psi_e^\mr{sum}=0$ for all $e$, so for any $f\in G_{j-1,j}$ we have
\begin{align}
A(\phi)^{\mr{disc}}(f) &=\phi^\mr{disc}(f)-\sum_{e\in E(G)}\phi^{\mr{disc}}(e)\psi_e^{\mr{edge}}(f)
= -\sum_{\substack{e\in G_{j-1,j}\\e\neq f}}\phi^{\mr{disc}}(e)\psi_e^{\mr{edge}}(f)\notag\\
&= -\sum_{\substack{e\in G_{j-1,j}\\e\neq f}}\phi^{\mr{disc}}(e)\frac{-c_1(e,f)+c_2(e,f)-c_3(e,f)}{|G(e)|}\label{eq:phidisc}.
\end{align}
We simplify this expression by defining some further statistics. Suppose $e,f\in G_{j-1,j}$ are distinct. If $e,f$ share a vertex (which we denote $e \sim f$), let $g_1(e,f)$ be the number of extensions of $e\cup f$ to a $6$-cycle in $G_{j-1,j}$ (note $g_1(e,f)=c_1(e,f)$). If $e,f$ do not share a vertex but there are vertices $u\in e$ and $v\in f$ that are adjacent to each other in $G$ (which we denote $e\sim_{u,v} f$), then let $g_2(e,f,u,v)$ be the number of 6-cycles in $G_{j-1,j}$ containing the edges $e,uv,f$. If $e,f$ share no vertex (which we denote $e\not \sim f$), let $g_3(e,f)$ be the number of $6$-cycles $J\subseteq G_{j-1,j}$ in which $d_J(e,f)=3$ (i.e., $e$ and $f$ are on opposite sides of the 6-cycle). It will be convenient to write $g_1(e,e)=g_2(e,e,u,v) = g_3(e,e) = 0$ as well as $g_3(e,f) = 0$ when $f\sim e$. Finally, given a vector $\vec{a}\in\mb{R}^t$ let $\on{disc}(\vec{a}) = \sum_{j=1}^t|a_j-\mu(\vec{a})|$ for $\mu(\vec{a}) = (a_1+\cdots+a_t)/t$. We will use the key fact that if $b_1+\cdots+b_t = 0$ then
\begin{equation}
    |a_1b_1+\cdots+a_tb_t|\le \sum_{i=1}^t |(a_i-\mu(\vec{a}))b_i|+|\mu(\vec{a})||b_1+\cdots+b_t|\le\snorm{\vec{b}}_\infty\on{disc}(\vec{a})+\snorm{\vec{a}}_\infty|b_1+\cdots+b_t|.\label{eq:disc}
\end{equation}
Now, continuing from \cref{eq:phidisc}, using \cref{eq:disc} with \cref{eq:edge-disc-average}, we obtain
\begin{align*}
|A(\phi)^{\mr{disc}}(f)|&= \Bigg|\sum_{e\sim f}\frac{g_1(e,f)}{|G(e)|}\phi^{\mr{disc}}(e) - \sum_{u,v,e: e\sim_{u,v}f}\frac{g_2(e,f,u,v)}{|G(e)|}\phi^{\mr{disc}}(e) + \sum_{e\not\sim f}\frac{g_3(e,f)}{|G(e)|}\phi^{\mr{disc}}(e)\Bigg|\\
&\le\snorm{\phi^{\mr{disc}}}_\infty\left(\sum_{v\in f}\on{disc}\Bigg(\left(\frac{g_1(e,f)}{|G(e)|}\right)_{\substack{e\in G_{j-1,j}:\\v\in e}}\Bigg) + \sum_{\substack{u,v:\,v\in f,\\ uv\in G_{j-1,j}}}\on{disc}\Bigg(\left(\frac{g_2(e,f,u,v)}{|G(e)|}\right)_{\substack{e\in G_{j-1,j}:\\u\in e}}\Bigg)\right.\\
&\qquad\qquad\qquad\qquad\qquad+\left.\vphantom{\sum_{\substack{u,v:\,v\in f,\\ uv\in G_{j-1,j}}}\on{disc}\Bigg(\left(\frac{g_2(e,f,u,v)}{|G(e)|}\right)_{\substack{e\in G_{j-1,j}:\\u\in e}}\Bigg)}
\sum_{u\in V_i\setminus e}\on{disc}\Bigg(\left(\frac{g_3(e,f)}{|G(e)|}\right)_{\substack{e\in G_{j-1,j}:\\u\in e}}\Bigg)\right) + A\cdot O(n^{2/3}\snorm{\phi}_\infty),
\end{align*}
where \[A=\sum_{v\in f}\Bigg\|\left(\frac{g_1(e,f)}{|G(e)|}\right)_{\substack{e\in G_{j-1,j}:\\v\in e}}\Bigg\|_\infty + \sum_{\substack{u,v:\,v\in f,\\ uv\in G_{j-1,j}}}\Bigg\|\left(\frac{g_2(e,f,u,v)}{|G(e)|}\right)_{\substack{e\in G_{j-1,j}:\\u\in e}}\Bigg\|_\infty+\sum_{u\in V_i\setminus e}\Bigg\|\left(\frac{g_3(e,f)}{|G(e)|}\right)_{\substack{e\in G_{j-1,j}:\\u\in e}}\Bigg\|_\infty.\]

Now we use the quasirandomness hypotheses in the lemma statement, which we can apply repeatedly to count copies of any graph $H$ in $G_{j-1,j}$ extending any given set of vertices and edges (we are assuming $p\ge n^{-1/12}$, so the contribution from ``degenerate 6-cycles'' with repeated vertices is negligible). For distinct edges $e\sim f$ we have
\[\frac{g_1(e,f)}{|G(e)|} = \frac{(1\pm O(\xi)) p^4n^3}{(1\pm O(\xi))^4p^5n^4} = \frac{1\pm O(\xi)}{pn}.\]
For $e\sim_{u,v} f$ we have
\[\frac{g_2(e,f,u,v)}{|G(e)|} = \frac{(1\pm O(\xi))p^3n^2}{(1\pm O(\xi))p^5n^4} = \frac{1\pm O(\xi)}{p^2n^2},\]
and for $e\not \sim f$ we have
\[\frac{g_3(e,f)}{|G(e)|} = \frac{(1\pm O(\xi))p^4n^2}{(1\pm O(\xi))p^5n^4} = \frac{1\pm O(\xi)}{pn^2}.\]
We deduce
\begin{align*}
|A(\phi)^{\mr{disc}}(e)|&\le\snorm{\phi^{\mr{disc}}}_\infty\bigg(O(pn)\cdot\frac{O(\xi)}{pn} + O(p^2n^2)\cdot \frac{O(\xi)}{p^2n^2} + O(pn^2)\cdot \frac{O(\xi)}{pn^2}\bigg) + O(n^{2/3}\snorm{\phi}_\infty)\\
&\lesssim \xi\snorm{\phi^{\mr{disc}}}_\infty + n^{2/3}\snorm{\phi}_\infty.
\end{align*}
That is to say, if $\xi$ is sufficiently small, then repeated application of the ``adjustment'' map $A$ reduces the discrepancy of $\phi$, at least until it reaches size $n^{2/3}\snorm{\phi}_\infty$ (which in our case will be order $n^{2/3}$).

\medskip
\noindent\textit{Step 5: Final estimates.} We need a few further bounds before completing the proof (we will use these to ensure that our final weight function $\phi_*$ has its values in $[0,1]$).

First, we consider $\phi_0$. Recalling the definition of $\chi_{V^j}$, for each $j\in \{1,2,3\}$ we have
\[|\chi_{V^j}(T)|\lesssim\frac{1}{n}\sup_{u\in V(G)}|f_u|\cdot\sum_{u,v\in V^j}\frac{1}{|\mc T_{2,1,1}(u,v)|}\sum_{H\in \mc T_{2,1,1}(u,v)}\mbm 1_{T\in \{H(u),H(v)\}}.\]
By the assumptions of the lemma, for any $u$ we have $|\mc T_u|=(1\pm O(\xi))(pn)(p^2qn)$ (considering the edges containing $u$, and then considering, for each such edge, the number of triangles including that edge), and therefore $|f_u|=O(\xi p^3 q n^2)$. For any $u,v$ we have $|\mc T_{2,1,1}(u,v)|\ge (1- \xi) (p^2 n) (C^{-1} p^3 n)$ (considering all length-2 paths between $u,v$, and then considering, for each such 2-path, the number of triangles which extend both the edges that 2-path to triangles). Also, $\sum_{u,v\in V^j}\sum_{H\in \mc T_{2,1,1}(u,v)}\mbm 1_{T\in \{H(u),H(v)\}}$ can be interpreted as a count of copies of $K_{2,1,1}$ which contain $T$; this is always at most $Cp^2n$.

So, recalling the definition of $\phi_0$ and the all-1 function $\bs 1:\mc T\to \mb R$, we have
\begin{equation}
    \snorm{\phi_0-\bs 1}_\infty = \snorm{\chi_{V^1}+\chi_{V^2}+\chi_{V^3}}_\infty \lesssim C^2\xi.\label{eq:phiminusone}
\end{equation}
We also upper-bound $\snorm{\phi_0^{\mr{disc}}}_\infty$: first recall that every edge is in $(1\pm \xi)p^2qn$ triangles, so it follows from \cref{eq:phiminusone} that $\snorm{(\phi_0-\bs 1)^\mr{disc}}_\infty\lesssim C^2\xi p^2 n$. We then again use that every edge is in $(1\pm \xi)p^2qn$ triangles to see that $\snorm{\bs 1^{\mr{disc}}}_\infty\lesssim\xi p^2qn$, from which we deduce that $\snorm{\phi_0^{\mr{disc}}}_\infty\lesssim C^2\xi p^2n$.

Now, we prove a bound on $\snorm{A(\phi)-\phi}_\infty$ for vertex-balanced $\phi$. Say a \emph{6-pyramid} $\mc J$ is the set of all triangles containing $v$ and a vertex of $J$, for some 6-cycle $J\in G_{j-1,j}$ and some vertex $v\in \mc T_{3,3,1}(J)$. Let $N_T$ be the number of 6-pyramids (with respect to the entire graph $G$ and set of triangles $\mc T$) containing a particular triangle $T$. Note that for any $e\in E(G)$ we have

\[
|\psi_e(T)|\le\frac{1}{|G(e)| \inf_{J \in G(e)} |\mc T_{3,3,1}(J)| }\sum_{\mc J}\mbm{1}_{T\in \mc J}\le \frac{1}{C^{-1}p^6n\cdot (1-O(\xi))^4p^5n^4}\sum_{\mc J}N_T
\le\frac{2C}{p^{11}n^5}N_T,
\]
where the sums are over all 6-pyramids $\mc J$ whose underlying 6-cycle contains $e$.

To specify a 6-pyramid containing $T$, whose underlying 6-cycle contains a particular edge $f\in T$, we need to specify four additional vertices. The first three of these vertices must each extend a particular edge to a triangle, and then the final vertex must extend two different edges to triangles. So, by the  assumptions of the lemma (and accounting for the three choices of $f$) we have $N_T\le 3(Cp^2 n)^3(Cp^3)\le C^4 p^9 n^4$.

Recalling the definition of $A(\phi)$, it follows that for any vertex-balanced $\phi$ we have
\[|A(\phi)(T)-\phi(T)|\le \snorm{\phi^{\mr{disc}}}_\infty\cdot6\cdot \frac{2C}{p^{11}n^5}N_T\lesssim \frac{C^5}{p^2n}\snorm{\phi^{\mr{disc}}}_\infty.\]
(here we are using that each 6-pyramid contributes to $\psi_e$ for 6 different $e$). We summarize
\[\snorm{A(\phi)^{\mr{disc}}}_\infty\lesssim\xi\snorm{\phi^{\mr{disc}}}_\infty+n^{2/3}\snorm{\phi}_\infty,\quad\snorm{A(\phi)-\phi}_\infty\lesssim\frac{C^5}{p^2n}\snorm{\phi^{\mr{disc}}}_\infty\]
for any vertex-balanced $\phi$, and
\[\snorm{\phi_0-\bs 1}_\infty\lesssim C^2\xi,\quad \snorm{\phi_0^{\mr{disc}}}_\infty\lesssim C^2\xi p^2n.\]

\medskip
\noindent\textit{Step 6: Iteration.} Let $\phi_k=A^{(k)}(\phi_0)$ be the result of iterating the ``adjustment map'' $k$ times. We will show that $\phi_k$ for $k$ near $(\log n)^2$ will suffice for our purposes.
First, we claim that $\snorm{\phi_k-\mbf{1}}_\infty\le 1/10$ for all $0\le k\le(\log n)^2$. We use induction: recall that $\snorm{\phi_0-\mbf{1}}_\infty\lesssim C^2\xi$ and that $\xi = C^{-8}$, which implies the result for $k = 0$ (assuming $C$ is sufficiently large). If the statement is true up to $k$, then we deduce
\[\snorm{\phi_i^{\mr{disc}}}_\infty\le(O(\xi))^k\snorm{\phi_0^{\mr{disc}}}_\infty + 2n^{2/3}\]
by iteration, and hence
\[\snorm{\phi_{k+1}-\phi_0}_\infty\le\sum_{i=0}^k\snorm{\phi_{i+1}-\phi_i}_\infty\lesssim\frac{C^5}{p^2n}(\snorm{\phi_0^{\mr{disc}}}_\infty + kn^{2/3})\lesssim C^7\xi.\]
Recalling that $\xi = C^{-8}$, for sufficiently large $C$ we find $\snorm{\phi_{k+1}-\phi_0}_\infty\le C^7\xi$. Combining with $\snorm{\phi_0-\mbf{1}}_\infty\lesssim C^2\xi$ and assuming $C$ large proves the result for $k+1$, completing the induction.

Next, this means that
\[\snorm{\phi_k^{\mr{disc}}}_\infty\le(O(\xi))^k\snorm{\phi^{\mr{disc}}}_\infty + 2n^{2/3}\]
by iteration, hence for $k = (\log n)^2$ we obtain $\snorm{\phi_k^{\mr{disc}}}_\infty\le 3n^{2/3}$.

Finally, fix some edge $e\in E(G)$. Since $\phi_\infty$ takes values in $[1\pm 1/10]$, it follows that $\alpha\coloneqq\phi_\infty^{\mr{edge}}(e) = (1\pm 1/10)(1\pm\xi)p^2qn$ for all edges $e$ and that $\beta \coloneqq 4\alpha / (p^2qn) = (1 \pm 1/10)(1 \pm \xi)4$. Therefore $\phi_* = \phi_\infty / \beta$ is a suitable function with $\phi_*^\mr{edge}=p^2q n/4+O(n^{2/3})$, as desired.
\end{proof}

\subsection{Counting intercalate-free Latin squares}\label{sub:counting-intercalate-free}

In this section we perform the necessary calculations to prove \cref{thm:pasch-free}. An analogous result for high-girth Steiner triple systems was proved by the authors in \cite{KSSS22}.

We can interpret the random construction described in \cref{sub:nutshell} as producing a triangle-decomposition of $K_{N,N,N}$ in which the set of triangles is \emph{ordered}. Indeed, the triangles in $\mc I$ arising from the initial sparsification process come first in the ordering (in the order they are chosen in the process), and then the remaining triangles come afterwards (in some arbitrary order; this order will not be important for us). We will lower-bound the number of ordered triangle-decompositions that can arise from our random construction, and then divide by $(N^2)!$ to obtain a lower bound on the number of intercalate-free Latin squares. This type of argument was first used by Keevash~\cite{Kee18} to count Steiner triple systems.

The analysis in \cite[Section 9]{KSSS22} implies that whp the initial sparsification process succeeds in constructing a partial Latin square $\mc I$ of girth greater than $6$ (i.e., an intercalate-free partial Latin square) with $M := \lfloor (1-N^{-\nu})N^2 \rfloor$ triangles, for some absolute constant $\nu>0$. Furthermore, the analysis shows that whp for every $0 \leq t \leq M$, the number of choices available to the process at step $t$ is $(1 \pm N^{-\nu}) A(t)$, with
\[
A(t) = N^3 \left( 1 - \frac{t}{N^2} \right)^3 \exp \left( - \frac{t^3}{N^6} \right).
\]
We say an outcome of the initial sparsification process is \emph{good} if it satisfies the above properties.

\begin{remark}
There is a compelling heuristic explanation for the above formula. Denote by $\mc C(t)$ the set of triangles chosen in the first $t$ steps of the process. Observe that a triangle $T$ is available to be selected only if none of its edges has been covered by $\mc C(t)$ and there does not exist an intercalate containing $T$ with the other three triangles in $\mc C(t)$. Note that the density of edges not covered by $\mc C(t)$ is approximately $(1-t/N^2)$. Thus, if the graph of uncovered edges is pseudorandom, then there are approximately $N^3 \left( 1-t/N^2 \right)^3$ triangles with all edges uncovered. Additionally, if we fix an uncovered triangle $T$ then the number of intercalates containing $T$ (in $K_{N,N,N}$) is approximately $N^3$. If we think of $\mc C(t)$ as a random set of triangles of the same density $|\mc C(t)|/N^3 = t/N^3$ then the expected number of intercalates containing $T$ whose other three triangles are in $\mc C(t)$ is approximately $N^3 \left( t/N^3 \right)^3 = t^3\!/N^6$. Thus, the factor $\exp \left( - t^3\!/N^6 \right)$ captures the heuristic that the number of intercalates containing $T$, whose other three triangles are in $\mc C(t)$, has an approximate Poisson distribution with parameter $t^3\!/N^6$.
\end{remark}

Now, in our initial sparsification process, we choose a uniformly random triangle at each step (among the valid choices at that step). The probability that this process produces a specific good ordered set of triangles is therefore $\prod_{t=0}^{M-1}((1 \pm N^{-\nu}) A(t))^{-1}$.
Since the process produces a good ordered set of triangles with probability $1-o(1)$, it follows that the number of good outcomes that can arise is
\begin{align*}
(1-o(1))\prod_{t=0}^{M-1} (1 \pm N^{-\nu})A(t) & = \left((1 \pm N^{-\nu})N^3\right)^{N^2} \exp \left( \sum_{t=0}^{M-1} \log \left( \left( 1- \frac{t}{N^2} \right)^3 \exp \left( - \frac{t^3}{N^6} \right) \right) \right)\\
& = \left((1 \pm N^{-\nu/2})N^3\right)^{N^2} \exp \left( N^2 \int_0^1 \log\left( (1-x)^3 \exp \left( -x^3 \right) \right) dx \right)\\
& = \left((1 \pm N^{-\nu/2})N^3\right)^{N^2} \exp \left( - \frac{13N^2}{4} \right) = \left( \left( 1 \pm N^{-\nu/2} \right) \frac{N^3}{e^{13/4}}  \right)^{N^2}.
\end{align*}
Dividing this expression by $(N^2)!=((1+o(1))N^2/e)^{N^2}$ yields the desired result, once we additionally note that whp such a choice of triangles can be completed to a full high-girth Latin square (due to the prior analysis).

\bibliographystyle{amsplain0.bst}
\bibliography{main.bib}

\end{document}